\theoremstyle{plain}
\newtheorem{theorem}{Theorem}[section]
\newtheorem{corollary}[theorem]{Corollary}
\newtheorem{lemma}[theorem]{Lemma}
\theoremstyle{remark}
\newtheorem{remark}{Remark}[section]
\numberwithin{equation}{section}
\newcommand\<{\langle}
\renewcommand\>{\rangle}
\newcommand\wt{\widetilde}
\newcommand\supp{\operatorname{supp}}
\newcommand\supvrai{\operatorname*{supvrai}}
\newcommand\Cdot{{\mskip2mu{\cdot}\mskip2mu}}
\newcommand{\leqnomode}{\tagsleft@true\let\veqno\@@leqno}
\newcommand{\reqnomode}{\tagsleft@false\let\veqno\@@eqno}
\begin{document}

\title[Riesz potential and maximal function for Dunkl transform]
{Riesz potential and maximal function\\ for Dunkl transform}

\author{D.~V.~Gorbachev}
\address{D.~Gorbachev, Tula State University,
Department of Applied Mathematics and Computer Science,
300012 Tula, Russia}
\email{dvgmail@mail.ru}

\author{V.~I.~Ivanov}
\address{V.~Ivanov, Tula State University,
Department of Applied Mathematics and Computer Science,
300012 Tula, Russia}
\email{ivaleryi@mail.ru}

\author{S.~Yu.~Tikhonov}
\address{S. Tikhonov, ICREA, Centre de Recerca Matem\`{a}tica, and UAB\\
Campus de Bellaterra, Edifici~C
08193 Bellaterra (Barcelona), Spain}
\email{stikhonov@crm.cat}

\date{\today}
\keywords{Dunkl transform, generalized translation operator, convolution, Riesz
potential} \subjclass{42B10, 33C45, 33C52}

\thanks{The first and the second authors were partially supported by RFBR
N\,16-01-00308. The third author was partially supported by MTM 2014-59174-P,
2014 SGR 289,  and by the CERCA
Programme of the Generalitat de Catalunya.}

\begin{abstract}
We study weighted $(L^p, L^q)$-boundedness properties of Riesz potentials %, defined by S.~Thangavelu and Y.~Xu,
 and fractional maximal functions for the Dunkl transform.
 In particular, we obtain the weighted Hardy--Littlewood--Sobolev type inequality and weighted week $(L^1, L^q)$ estimate.
 We find a sharp constant in the weighted $L^p$-inequality, generalizing the results of W.~Beckner  and S.~Samko.
%We study weighted $(L^q, L^p)$-boundedness properties of Riesz potentials %, defined by S.~Thangavelu and Y.~Xu,
% and fractional maximal functions for the Dunkl transform. We prove an analogue of the Stein--Weiss inequality and the conditions of weighted week %$(L^q, L^1)$-boundedness. We prove weighted $L^p$-inequality with sharp constant, generalizing the results of S.~Samko and W.~Beckner.
\end{abstract}

\maketitle

\section{Introduction}

Let $\mathbb{R}^{d}$ be the real Euclidean space of $d$ dimensions
 equipped with a scalar
product $\<x,y\>$ and a norm $|x|=\sqrt{\<x,x\>}$.
 Let
 $d\mu(x)=(2\pi)^{-d/2}\,dx$ be the normalized Lebesgue measure, $L^p(\mathbb{R}^{d})$, $1\leq p<\infty$, be the Lebesgue space with the norm $\|f\|_p=\bigl(\int_{\mathbb{R}^{d}}|f|^p\,d\mu\bigr)^{1/p}$, and $\mathcal{S}(\mathbb{R}^d)$ be the Schwartz space.
 The Fourier transform  is given by
\[
\mathcal{F}(f)(y)=\int_{\mathbb{R}^{d}}f(x)e^{-i\<x,y\>}\,d\mu(x).
\]

Throughout the paper, we will assume that $A\lesssim B$ means that $A\leq C B$ with a constant $C$ depending only on nonessential parameters. For $p\ge1$,\, $p'=\frac{p}{p-1}$ is the H\"{o}lder conjugate and $\chi_E$ is the characteristic function of a set $E$.

The Riesz potential operator or fractional integral $I_{\alpha}$ is defined by
\[
I_{\alpha}f(x)=(\gamma_{\alpha})^{-1}\int_{\mathbb{R}^d}f(y)|x-y|^{\alpha-d}\,d\mu(y)=
(\gamma_{\alpha})^{-1}\int_{\mathbb{R}^d}\tau^{-y}f(x)|y|^{\alpha-d}\,d\mu(y),
\]
where $0<\alpha<d$, $\gamma_{\alpha}=2^{\alpha-d/2}\Gamma(\alpha/2)/\Gamma((d-\alpha)/2)$, and
   $\tau^yf(x)=f(x+y)$ is the translation operator.
Such operator was first investigated by O.~Frostman \cite{Fro35}. Several important properties of the potential were obtained by M.~Riesz \cite{Rie49}.

The weighted $(L^p, L^q)$-boundedness of Riesz potentials is given by the following
Stein--Weiss inequality  %on $\mathcal{S}(\mathbb{R}^d)$
 \begin{equation}\label{eq1.1}
\big\||x|^{-\gamma}I_{\alpha}f(x)\big\|_q\leq \mathbf{c}(\alpha,\beta,\gamma,p,q,d)\big\||x|^{\beta}f(x)\big\|_p
\end{equation}
with the sharp constant $\mathbf{c}(\alpha,\beta,\gamma,p,q,d)$ and $1<p\le q<\infty$.
 %If $\mathbf{c}(\alpha,\beta,\gamma,p,q,d)<\infty$, then we can continue the Riesz potential $I_{\alpha}$ on $L^p(\mathbb{R}^{d}, |x|^{\beta p}\,d\mu)$.
 % The conditions guarantee   for the finiteness of the constant $\mathbf{c}(\alpha,\beta,\gamma,p,q,d)$ are given as follows.
Sufficient  conditions for the finiteness of  $\mathbf{c}(\alpha,\beta,\gamma,p,q,d)$ are well known.
\begin{theorem}\label{thm1.1}
Let \, $d\in \mathbb{N}$, $1\le p\le q<\infty$,
$\gamma<\frac{d}{q}$,  $\gamma+\beta\geq 0$, $0<\alpha<d$, and $\alpha-\gamma-\beta=d(\frac{1}{p}-\frac{1}{q})$.

\textup{(a)} \, If \, $1< p\le q<\infty$ and $\beta<\frac{d}{p'}$, then $\mathbf{c}(\alpha,\beta,\gamma,p,q,d)<\infty$.

\medskip

\textup{(b)} \, If \, $p=1$, $1<q<\infty$, $\beta\le 0$, then, for $f\in \mathcal{S}(\mathbb{R}^d)$ and
$\lambda>0$,
\[
\int_{\{x\in \mathbb{R}^d\colon |x|^{-\gamma}|I_{\alpha}f(x)|>\lambda\}}\,d\mu(x)\lesssim \Bigl(\frac{\||x|^{\beta}f(x)\|_1}{\lambda}\Bigr)^q.
\]
\end{theorem}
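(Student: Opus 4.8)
The plan is to reduce the weighted inequality to an unweighted one by setting $g=|\Cdot|^{\beta}f$, so that (up to the constant $\gamma_{\alpha}$) it suffices to bound the kernel operator
\[
Tg(x)=|x|^{-\gamma}\int_{\mathbb{R}^d}|x-y|^{\alpha-d}|y|^{-\beta}g(y)\,d\mu(y)
\]
from $L^p$ to $L^q$, the target estimate being $\|Tg\|_q\lesssim\|g\|_p$. First I would record that the scaling hypothesis $\alpha-\gamma-\beta=d(\frac1p-\frac1q)$ makes the kernel $K(x,y)=|x|^{-\gamma}|x-y|^{\alpha-d}|y|^{-\beta}$ homogeneous of exactly the degree forcing $T$ to respect the $L^p\to L^q$ dilation scaling, so no information is lost in this reduction and all the power-counting below will close.

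Next I would split the $y$-integration into the three regions $E_1=\{|y|\le|x|/2\}$, $E_2=\{|x|/2<|y|<2|x|\}$, $E_3=\{|y|\ge2|x|\}$, writing $T=T_1+T_2+T_3$. On $E_1$ one has $|x-y|\sim|x|$, so $T_1$ collapses to the weighted Hardy (averaging) operator $|x|^{\alpha-d-\gamma}\int_{|y|\le|x|/2}|y|^{-\beta}g\,d\mu$; on $E_3$ one has $|x-y|\sim|y|$, so $T_3$ collapses to the conjugate Hardy operator $|x|^{-\gamma}\int_{|y|\ge2|x|}|y|^{\alpha-d-\beta}g\,d\mu$. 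For part (a) I would invoke the classical weighted Hardy inequality with power weights: a direct computation of the relevant Muckenhoupt-type supremum shows $T_1\colon L^p\to L^q$ is bounded precisely when $\beta<\frac{d}{p'}$, and $T_3\colon L^p\to L^q$ precisely when $\gamma<\frac{d}{q}$, the two balancing powers of $|x|$ cancelling exactly because of the scaling hypothesis. This is where both side conditions of (a) are consumed.

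The genuinely singular piece is the diagonal part $T_2$, and this is the step I expect to be the main obstacle. Here $|x|\sim|y|$, so the weights combine into $|x|^{-\gamma-\beta}$ and $T_2g(x)\sim|x|^{-\gamma-\beta}\int_{|x|/2<|y|<2|x|}|x-y|^{\alpha-d}g(y)\,d\mu(y)$, which is the full Riesz kernel but with $\alpha$ \emph{not} matching the critical Sobolev exponent for the pair $(p,q)$. To get around this I would decompose into dyadic annuli $A_k=\{2^k\le|x|<2^{k+1}\}$, freeze the weight as $2^{-k(\gamma+\beta)}$ on $A_k$, and rescale each annulus to the unit scale. On the unit scale the inner operator is the ordinary Riesz potential over a bounded annulus, and the hypothesis $\gamma+\beta\ge0$ forces $\alpha\ge d(\frac1p-\frac1q)$, so the Hardy--Littlewood--Sobolev inequality (equality case) or Young's inequality (strict case) bounds it from $L^p$ to $L^q$ uniformly. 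Undoing the rescaling, all powers of $2^k$ cancel by the scaling hypothesis, giving $\|T_2g\|_{L^q(A_k)}\lesssim\|g\|_{L^p(\wt A_k)}$ uniformly in $k$ on slightly enlarged annuli $\wt A_k$. Finally I would sum in $k$: using $q\ge p$ together with the embedding $\ell^p\hookrightarrow\ell^q$ and the bounded overlap of the $\wt A_k$,
\[
\|T_2g\|_q^q=\sum_k\|T_2g\|_{L^q(A_k)}^q\lesssim\Bigl(\sum_k\|g\|_{L^p(\wt A_k)}^p\Bigr)^{q/p}\lesssim\|g\|_p^q .
\]
It is exactly here that the hypothesis $p\le q$ is essential.

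For part (b) I would run the same three-region decomposition with $p=1$. On $E_1$ and $E_3$ the condition $\beta\le0$ (resp.\ $\gamma<\frac dq$) lets me bound the Hardy operators pointwise by $|x|^{-d/q}\|g\|_1$, whence Chebyshev's inequality gives the claimed weak estimate directly. On the diagonal $E_2$ I would use the weak-type $(1,q)$ endpoint of the Hardy--Littlewood--Sobolev inequality on each rescaled annulus (again available since $\gamma+\beta\ge0$ makes the natural weak exponent at least $q$), and then sum the resulting level-set measures over $k$, now using $\ell^1\hookrightarrow\ell^q$ in place of $\ell^p\hookrightarrow\ell^q$. The main subtlety in (b) is that weak-type bounds do not add as cleanly as strong ones, so the annulus-by-annulus summation of the distribution functions must be carried out by hand rather than quoted.
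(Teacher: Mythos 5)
Your argument is correct, but note first that the paper never actually proves Theorem~\ref{thm1.1}: it is quoted as classical (Hardy--Littlewood, Sobolev, Stein--Weiss for part (a), and \cite{Saw,GadGul11} for part (b)), and the only proof carried out in the paper is that of the Dunkl generalization, Theorem~\ref{thm1.4}, which reduces to Theorem~\ref{thm1.1} at $k\equiv 0$. Measured against that proof, your skeleton is the same one (absorb the weight into $g=|\Cdot|^{\beta}f$, split into a diagonal region and two off-diagonal regions, treat the off-diagonal pieces as Hardy operators, and reduce the diagonal to an unweighted estimate), but the execution of each piece is genuinely different. On the diagonal the paper uses no dyadic decomposition at all: from $|x|\sim|y|$ and $\gamma+\beta\ge 0$ it dominates the kernel pointwise, $|x|^{-\gamma}|x-y|^{\alpha-d}|y|^{-\beta}\lesssim |x-y|^{\wt{\alpha}-d}$ with $\wt{\alpha}=d(\frac1p-\frac1q)$, and then quotes the global unweighted HLS inequality with exponent $\wt\alpha$; this is shorter than your annulus--rescaling--summation argument, and it is the version that survives in the Dunkl setting, where your rescaling of annuli to unit scale has no analogue because the kernel $\Phi_{\alpha}(x,y)$ is not a function of $x-y$. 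What your longer route buys is that $p=q$ is covered uniformly (Young's inequality on each annulus, using $\alpha>0$), whereas the paper must exclude $p=q$ from the proof of Theorem~\ref{thm1.4}(a) and route it through the sharp-constant Theorem~\ref{thm1.3}, which is far heavier machinery than finiteness requires; your version also makes explicit that $p\le q$ enters only through $\ell^p\hookrightarrow\ell^q$. On the off-diagonal pieces the paper does not invoke the Muckenhoupt--Bradley two-weight $(L^p,L^q)$ Hardy characterization you use; instead it proves a sharp $(L^p,L^p)$ Hardy inequality (Theorem~\ref{thm3.1}) and upgrades it to $(L^p,L^q)$ by combining the pointwise H\"older bound $|Vf(x)|\lesssim|x|^{-d/p}\|f\|_{p}$ with the splitting $|Vf|^{q}=|Vf|^{p}\,|Vf|^{q-p}$ --- an equivalent but more self-contained device. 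Finally, in part (b) your treatment of the Hardy pieces is actually cleaner than the paper's: the pointwise bound $\lesssim|x|^{-d/q}\|g\|_{1}$ followed by measuring the superlevel set covers $\beta=0$ with no extra work, whereas the paper's route through the strong bilinear estimate for $J_3$ degenerates at $\beta=0$ and forces the separate homogeneity argument at the end of its Section~5; and your summation of the diagonal level sets via $\ell^1\hookrightarrow\ell^q$ does close, by the same exponent cancellation you checked in part (a).
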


The part (a) in Theorem~\ref{thm1.1} was proved by G.H.~Hardy and
J.E.~Littlewood \cite{HarLit28} for $d=1$, S.~Sobolev \cite{Sob38} for $d>1$
and $\gamma=\beta=0$, E.M.~Stein and G.~Weiss \cite{SteWei58} in the general
case. The conditions for weak boundedness can be found in \cite{Saw, GadGul11}.

The sharp constant $\mathbf{c}(\alpha,0,0,p,q,d)$ in the non-weighted Sobolev
inequality was calculated by E.H. Lieb \cite{Lie83} in any of the following
cases:  (1) $q=p'$, $1<p<2$, (2) $q=2$, $1<p<2$,  (3) $p=2$, $2<q<\infty$.
Moreover, in these cases there exist maximazing functions. In the weighted
Hardy--Littlewood--Sobolev inequality the constant
$\mathbf{c}(\alpha,\beta,\gamma,p,q,d)$ is known only for $q=p$.

\begin{theorem}\label{thm1.2}
If \, $d\in \mathbb{N}$, $1<p<\infty$, $\gamma<\frac{d}{p}$, $\beta<\frac{d}{p'}$, $\alpha>0$, and $\gamma=\alpha-\beta$, then
\[
 \mathbf{c}(\alpha,\beta,\gamma,p,p,d)=2^{-\alpha}\frac{\Gamma(\frac{1}{2}(\frac{d}{p}-\alpha+\beta))
\Gamma(\frac{1}{2}(\frac{d}{p'}-\beta))}
{\Gamma(\frac{1}{2}(\frac{d}{p'}+\alpha-\beta))
\Gamma(\frac{1}{2}(\frac{d}{p}+\beta))}.
\]
\end{theorem}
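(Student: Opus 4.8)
The plan is to exploit the fact that, in the diagonal case $p=q$, the balance condition $\gamma=\alpha-\beta$ renders the underlying integral operator invariant under dilations, so that its norm can be computed exactly. First I would substitute $g(x)=|x|^{\beta}f(x)$, which recasts inequality \eqref{eq1.1} as the assertion that the operator
\[
Tg(x)=(\gamma_{\alpha})^{-1}\int_{\mathbb{R}^d}|x|^{-\gamma}|y|^{-\beta}|x-y|^{\alpha-d}g(y)\,d\mu(y)
\]
is bounded on $L^p(\mathbb{R}^d)$ with norm exactly $\mathbf{c}(\alpha,\beta,\gamma,p,p,d)$. Its kernel $K(x,y)=(\gamma_{\alpha})^{-1}|x|^{-\gamma}|y|^{-\beta}|x-y|^{\alpha-d}$ is nonnegative, invariant under simultaneous rotations of $x$ and $y$, and, precisely because $\gamma=\alpha-\beta$, homogeneous of degree $-d$: $K(\lambda x,\lambda y)=\lambda^{-d}K(x,y)$ for $\lambda>0$.

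For such rotation-invariant kernels of homogeneity $-d$ the extremizers are radial powers (the Hardy--Littlewood--P\'olya principle). I would pass to polar coordinates, average $K$ over the sphere, and reduce $T$ to a one-dimensional operator on $(0,\infty)$ with a kernel homogeneous of degree $-d$ against the measure $s^{d-1}\,ds$. A scaling computation then shows that the radial power $|y|^{-\delta}$ is a formal eigenfunction of $T$ with eigenvalue $\int_{\mathbb{R}^d}K(\omega_0,y)|y|^{-\delta}\,d\mu(y)$ (for a fixed unit vector $\omega_0$), and that $\delta=d/p$ is exactly the borderline exponent for $L^p(s^{d-1}\,ds)$. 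The upper bound then comes from Schur's test with the power weight $h(y)=|y|^{-d/(pp')}$: rotation invariance and homogeneity give $\int K(x,y)h(y)^{p'}\,d\mu(y)=\mathbf{c}\,h(x)^{p'}$ and the dual relation with the same constant, so that $\|T\|_{L^p\to L^p}\le\mathbf{c}$. Sharpness follows by inserting truncations of $|y|^{-d/p}$. This identifies
\[
\mathbf{c}(\alpha,\beta,\gamma,p,p,d)=\int_{\mathbb{R}^d}K(\omega_0,y)\,|y|^{-d/p}\,d\mu(y)
=(\gamma_{\alpha})^{-1}\int_{\mathbb{R}^d}|y|^{-\beta-d/p}\,|\omega_0-y|^{\alpha-d}\,d\mu(y).
\]

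It remains to evaluate this integral, which I would do with the classical beta-type (Riesz composition) formula
\[
\int_{\mathbb{R}^d}|y|^{-a}|x-y|^{-b}\,dy
=\pi^{d/2}\,\frac{\Gamma(\frac{d-a}{2})\,\Gamma(\frac{d-b}{2})\,\Gamma(\frac{a+b-d}{2})}
{\Gamma(\frac{a}{2})\,\Gamma(\frac{b}{2})\,\Gamma(d-\frac{a+b}{2})}\,|x|^{d-a-b},
\]
valid for $0<a,b<d$ and $a+b>d$, applied with $a=\beta+\tfrac{d}{p}$ and $b=d-\alpha$. The hypotheses $\gamma<\tfrac{d}{p}$, $\beta<\tfrac{d}{p'}$ and $0<\alpha<d$ are exactly what make $a+b-d=\tfrac{d}{p}-\gamma>0$, $d-a=\tfrac{d}{p'}-\beta>0$ and $d-b=\alpha>0$, so the formula applies with all Gamma arguments positive. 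Substituting $\gamma_{\alpha}=2^{\alpha-d/2}\Gamma(\alpha/2)/\Gamma((d-\alpha)/2)$ and simplifying, the factor $\pi^{d/2}$ combines with the $(2\pi)^{-d/2}$ in $d\mu$ to yield $2^{-d/2}$, the terms $\Gamma(\alpha/2)$ and $\Gamma((d-\alpha)/2)$ cancel, the powers of $2$ collapse to $2^{-\alpha}$, and one is left precisely with the asserted expression.

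The genuinely delicate point is the \emph{sharpness} (the lower bound): since the critical power $|y|^{-d/p}\notin L^p$, no honest extremizer exists, and one must justify that it is nevertheless the true extremal. This is handled by testing on $|y|^{-d/p}$ restricted to annuli $\{R^{-1}\le|y|\le R\}$ and letting $R\to\infty$, checking that the spherical average of $K$ does not diminish the constant in the limit — which is exactly where rotation invariance is indispensable. By contrast, the upper bound is a routine application of the scaling-adapted Schur test once the degree $-d$ homogeneity of $K$ is in place.
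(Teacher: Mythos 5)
Your proposal is correct, and it does prove the statement, but its implementation differs from the paper's in two substantive ways. The paper obtains Theorem~\ref{thm1.2} as the case $k\equiv 0$ of Theorem~\ref{thm1.3}, whose proof shares your overall structure: by dilation and rotation invariance the sharp constant equals the integral of the weight-absorbed kernel against the critical power $|y|^{-d/p}$, and sharpness comes from truncated powers (the paper's test functions $\chi_{[e^{-\lambda},e^{\lambda}]}$ in Lemma~\ref{lem2.1} are exactly your annular truncations written in logarithmic coordinates). The devices differ, however. For the upper bound the paper passes to polar coordinates and uses the sharp Mellin-convolution bound $\|A_g\|_{p\to p}=\|g\|_1$ of Lemma~\ref{lem2.1} together with H\"{o}lder's inequality on the sphere, whereas you apply Schur's test with the weight $|x|^{-d/(pp')}$ directly on $\mathbb{R}^d$; these are equivalent in substance, but note that since $K(x,y)\neq K(y,x)$ when $\beta\neq\gamma$, your phrase ``the dual relation with the same constant'' hides a real (if easily verified) claim: the dual Schur integral is $\int_{\mathbb{R}^d} K(z,\omega_0)|z|^{-d/p'}\,d\mu(z)$, and it coincides with $\mathbf{c}$ only because of the balance $\alpha=\gamma+\beta$ — this computation should be made explicit. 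For the evaluation of the constant, you invoke the classical Riesz composition (beta-integral) formula, whose hypotheses do hold here (in particular $\alpha<d$ follows from $\alpha=\gamma+\beta<\frac{d}{p}+\frac{d}{p'}$, and $\beta+\frac{d}{p}>\alpha>0$ follows from $\gamma<\frac{d}{p}$), while the paper instead expands the spherically averaged kernel $\Phi_0(t,1)$ into a power series and sums it by Gauss's theorem $F(a,b;c;1)=\frac{\Gamma(c)\Gamma(c-a-b)}{\Gamma(c-a)\Gamma(c-b)}$. Your route is the cleaner classical one, but it is tied to the explicit Euclidean kernel $|x-y|^{\alpha-d}$; the paper's series/hypergeometric route is what survives in the Dunkl setting, where the generalized dimension $d_k$ need not be an integer and no pointwise composition formula for the generalized translation is available.
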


Theorem~\ref{thm1.2} was proved by I.W.~Herbst \cite{Her77} for $\beta=0$ and   W.~Beckner \cite{Bec08} and S.~Samko \cite{Sam05} in the general case.

%Theorem~\ref{thm0.1} was supplemented by the conditions of weak boundedness
%of Riesz potentials (see \cite{GadGul11}).

%\begin{theorem}\label{thm0.3}
%If \, $d\in \mathbb{N}$, $1<q<\infty$,
%$\gamma<\frac{d}{q}$, $\beta\leq 0$, $\gamma+\beta\geq 0$, $0<\alpha<d$, and\,
%$\alpha-\gamma-\beta=\frac{d}{q'}$, then for $f\in \mathcal{S}(\mathbb{R}^d)$
%and $\lambda>0$ the following inequality holds
%\[
%\int_{\{x\in \mathbb{R}^d\colon |x|^{-\gamma}|I_{\alpha}f(x)|>\lambda\}}\,d\mu(x)
%\lesssim \Bigl(\frac{\||x|^{\beta}f(x)\|_1}{\lambda}\Bigr)^q.
%\]
%\end{theorem}

For $\alpha\in\mathbb{R}$, we define the Riesz potential in the distributional sense. Let $\Phi$ be the Lizorkin space \cite{Liz63}, \cite[p.~39]{Sam02}, that is, a subspace of the
Schwartz space $\mathcal{S}(\mathbb{R}^{d})$ which consists of functions orthogonal to all
polynomials:
\[
\int_{\mathbb{R}^{d}}x^nf(x)\,d\mu(x)=0,\quad n=(n_1,\dots,n_d)\in \mathbb{Z}^d_+.
\]
The subspace $\Phi$  is invariant with respect to the operator $I_{\alpha}$ and its
inverse $I_{\alpha}^{-1}=(-\Delta)^{\alpha/2}$:
\[
I_{\alpha}(\Phi)=(-\Delta)^{\alpha/2}(\Phi)=\Phi,
\]
where  $\Delta$ is the Laplacian.
 %We will prove in Section ???
 Note  that $\Phi$ is dense in $L_p(\mathbb{R}^d,  |x|^{\beta p}\,d\mu)$
for $1<p<\infty$ and $\beta\in (-d/p,\, d/p')$ \cite[p. 41]{Sam02}.
%The dense of $\Phi$ is proved in \cite{Sam02} in $L^p$-spaces with an arbitrary Muckenhoupt $A_p$-weight.

It is worth mentioning that the Stein--Weisz inequality \eqref{eq1.1} on $\Phi$ is equivalent to the
Hardy--Rellich inequality
\[
\big\| |x|^{-\gamma}f(x)\big\|_{q}\le
\mathbf{c}(\alpha,\beta,\gamma,p,q,d)\big\||x|^{\beta}(-\Delta)^{\alpha/2}f(x)\big\|_{p}.
\]

Let $D_jf(x)$ be the usual partial derivative with respect to a variable $x_j$, $j=1,\dots,d$,
$D=(D_1,\dots,D_d)$, $D^nf(x)=\prod_{j=1}^dD_j^{n_j}f(x)$, $n\in \mathbb{Z}^d_+$. The subspace
\[\Psi=\{\mathcal{F}(f)\colon f\in\Phi\}=\{f\in\mathcal{S}(\mathbb{R}^{d})\colon D^nf(0)=0,\,\, n\in \mathbb{Z}^d_+\}
\]
is invariant with respect to the operator $\mathcal{F}(I_{\alpha})$ and $\mathcal{F}((-\Delta)^{\alpha/2})$:
\[
\mathcal{F}(I_{\alpha})(\Psi)=\mathcal{F}((-\Delta)^{\alpha/2})(\Psi)=\Psi.
\]
For a distribution $f\in\Phi'$ and $\alpha\in \mathbb{R}$ we set
\[
I_{\alpha}f=\mathcal{F}^{-1}|\,{\cdot}\,|^{-\alpha}\mathcal{F}(f),\quad (-\Delta)^{\alpha/2}f=\mathcal{F}^{-1}|\,{\cdot}\,|^{\alpha}\mathcal{F}(f).
\]
If $\varphi\in\Phi$, then
\[
\<I_{\alpha}f,\varphi\>=\<f,\mathcal{F}|\,{\cdot}\,|^{-\alpha}\mathcal{F}^{-1}(\varphi)\>,\quad
\<(-\Delta)^{\alpha/2}f,\varphi\>=\<f,\mathcal{F}|\,{\cdot}\,|^{\alpha}\mathcal{F}^{-1}(\varphi)\>.
\]

One of the generalizations of the Fourier transform is the Dunkl transform
$\mathcal{F}_{k}$ (see \cite{Dun92,Ros02}). Our main goal in this paper is to prove analogues of Theorems~\ref{thm1.1} and \ref{thm1.2} for the Riesz potential associated with the Dunkl transform. We shall call it {\it the D-Riesz potential}.

Let a finite subset $R\subset \mathbb{R}^{d}\setminus\{0\}$ be a root system,
let $R_{+}$ be positive subsystem of $R$, let $G(R)\subset O(d)$ be finite reflection group,
generated by reflections $\{\sigma_{a}\colon a\in R\}$, where $\sigma_{a}$ is a
reflection with respect to hyperplane $\<a,x\>=0$, let $k\colon R\to
\mathbb{R}_{+}$ be $G$-invariant multiplicity function. Recall that a finite
subset $R\subset \mathbb{R}^{d}\setminus\{0\}$ is called a root system, if
\[
R\cap\mathbb{R} a=\{a, -a\}\quad \text{and}\quad \sigma_{a}R=R\ \text{for all }a\in R.
\]

Let
\[
v_{k}(x)=\prod_{a\in R_{+}}|\<a,x\>|^{2k(a)}
\]
be the Dunkl weight.
The normalized Macdonald--Metha--Selberg constant is given by
\[
c_{k}^{-1}=\int_{\mathbb{R}^{d}}e^{-|x|^{2}/2}v_{k}(x)\,dx.
\]
Let
$L^{p}(\mathbb{R}^{d},d\mu_{k})$  be the space of complex-valued Lebesgue measurable functions $f$ such that
\[
\|f\|_{p,d\mu_{k}}=\Bigl(\int_{\mathbb{R}^{d}}|f|^{p}\,d\mu_{k}\Bigr)^{1/p}<\infty,
\]
where
$d\mu_{k}(x)=c_{k}v_{k}(x)dx$ is the Dunkl measure.
Assume that
\begin{equation}\label{eq1.2-}
T_{j}f(x)=D_{j}f(x)+
\sum_{a\in R_{+}}k(a)\<a,e_{j}\>\,\frac{f(x)-f(\sigma_{a}x)}{\<a,x\>}
\end{equation}
are differential-differences Dunkl operators, $j=1,\ldots,d$, and
$\Delta_k=\sum_{j=1}^dT_j^2$ is the Dunkl Laplacian.

The Dunkl kernel $E_{k}(x, y)$ is a unique
solution of the system
\[
T_{j}f(x)=y_{j}f(x),\quad j=1,\ldots,d,\qquad f(0)=1.
\]
Let $e_k(x,y)=E_{k}(x, iy)$. It plays the role of a generalized exponential function. Its properties are
similar to those of the classical exponential function $e^{i\<x, y\>}$. Several basic properties
follow from the integral representation given by M.~R\"{o}sler
\cite{Ros99}
\begin{equation}\label{eq1.2}
e_k(x,y)=\int_{\mathbb{R}^d}e^{i\<\xi,y\>}\,d\mu_x^k(\xi),
\end{equation}
where $\mu_x^k$ is a probability Borel measure, whose support is contained in
$\mathrm{co}{}(\{gx\colon g\in G(R)\})$ the convex hull of the $G$-orbit of $x$
in $\mathbb{R}^d$. In particular, $|e_k(x,y)|\leq 1$ and $\supp
\mu_x^k\subset B_{|x|}$,
where $B_r$ is the Euclidean ball of radius $r$ centered at 0.

For $f\in L^{1}(\mathbb{R}^{d},d\mu_{k})$, the Dunkl transform is defined by the  equality
\[
\mathcal{F}_{k}(f)(y)=\int_{\mathbb{R}^{d}}f(x)\overline{e_{k}(x,
y)}\,d\mu_{k}(x).
\]
If $k\equiv0$, then $\mathcal{F}_{0}$ is the Fourier transform $\mathcal{F}$. We note that $\mathcal{F}_{k}(e^{-|\,\cdot\,|^2/2})(y)=e^{-|x|^2/2}$
and  $\mathcal{F}_{k}^{-1}(f)(x)=\mathcal{F}_{k}(f)(-x)$.
The Dunkl transform is isometry in $\mathcal{S}(\mathbb{R}^d)$ and $L^{2}(\mathbb{R}^{d},d\mu_{k})$ and %the Plancherel equality has the form
$\|f\|_{2, d\mu_{k}}=\|\mathcal{F}_{k}(f)\|_{2, d\mu_{k}}$.

M. R\"{o}sler \cite{Ros98} defined the generalized translation operator $\tau^y$, $y\in \mathbb{R}^d$, on $L^{2}(\mathbb{R}^{d},d\mu_{k})$ by equality
\[
\mathcal{F}_k(\tau^yf)(z)=e_k(y, z)\mathcal{F}_k(f)(z),
\]
or
\begin{equation}\label{eq1.3}
\tau^yf(x)=\int_{\mathbb{R}^d}e_k(y, z)e_k(x, z)\mathcal{F}_k(f)(z)\,d\mu_k(z).
\end{equation}
It acts from $L^{2}(\mathbb{R}^{d},d\mu_{k})$ to $L^{2}(\mathbb{R}^{d},d\mu_{k})$ and $\|\tau^y\|_{2\to 2}=1$.

If $k\equiv0$, then $\tau^{y}f(x)=f(x+y)$.
If $f\in \mathcal{S}(\mathbb{R}^d)$, then $\tau^{y}f(x)\in \mathcal{S}(\mathbb{R}^d)\times\mathcal{S}(\mathbb{R}^d)$ and  equality \eqref{eq1.3} holds pointwise. K. Trim\`{e}che extended $\tau^y$ on $C^{\infty}(\mathbb{R}^d)$ \cite{Tri02}. For example, $\tau^y1=1$. In general, $\tau^{y}$ is not positive operator and the question of its $L_p$-boundedness remains open.

First, we define the D-Riesz potential for distributions. Let
\[
\Phi_k=\Bigl\{f\in \mathcal{S}(\mathbb{R}^d)\colon \int_{\mathbb{R}^{d}}x^nf(x)\,d\mu_k(x)=0,\quad n\in \mathbb{Z}^d_+\Bigr\}
\]
be the weighted Lizorkin space,
\[
\Psi_k=\{\mathcal{F}_{k}(f)\colon f\in\Phi_k\}. %=\{f\in\mathcal{S}(\mathbb{R}^{d})\colon T^nf(0)=0,\,\, n\in \mathbb{Z}^d_+\}.
\]

For $\alpha\in\mathbb{R}$, we define the D-Riesz potential on $\Phi_k$ by equality \[ I^{k}_{\alpha}f=\mathcal{F}_{k}^{-1}|\,{\cdot}\,|^{-\alpha}\mathcal{F}_{k}(f). \]

In Section 6,
we will prove that $\Psi_k=\Psi$ (see Theorem \ref{thm6.1}) and then  $I^{k}_{\alpha}(\Phi_{k})=\Phi_{k}$ and $\mathcal{F}_{k}(I_{\alpha}^{k})(\Psi_{k})=\Psi_{k}$.
Therefore, we can define the D-Riesz potential $I^{k}_{\alpha}$ for $f\in\Phi_k'$ and $\alpha\in \mathbb{R}$  by the same equality
$I^{k}_{\alpha}f=\mathcal{F}_{k}^{-1}|\,{\cdot}\,|^{-\alpha}\mathcal{F}_{k}(f)$ as follows
\[
\<I_{\alpha}f,\varphi\>=\<f,\mathcal{F}_{k}|\,{\cdot}\,|^{-\alpha}\mathcal{F}^{-1}_{k}(\varphi)\>,\quad\varphi\in\Phi_k.
\]
We will also prove (see Theorem \ref{thm5.2}) that $\Phi_k$ is dense in $L_p(\mathbb{R}^d,  |x|^{\beta p}\,d\mu_k)$
for $1<p<\infty$ and $\beta\in (-d_k/p,\, d_k/p')$, where
\begin{equation}\label{eq1.4}
d_k=2\lambda_k+2,\quad \lambda_k=\frac{d}{2}-1+\sum_{a\in R_+}k(a).
\end{equation}

S.~Thangavelu and Y.~Xu  defined \cite {ThaXu07} the D-Riesz potential on Schwartz space as follows %in integral form
\begin{equation}\label{eq1.5}
I_{\alpha}^kf(x)=(\gamma^k_{\alpha})^{-1}\int_{\mathbb{R}^d}\tau^{-y}f(x)|y|^{\alpha-d_k}\,d\mu_k(y),
\end{equation}
where $0<\alpha<d_k$ and $\gamma^k_{\alpha}=2^{\alpha-d_k/2}\Gamma(\alpha/2)/\Gamma((d_k-\alpha)/2)$.

We are interested  in the Stein--Weiss inequality for the D-Riesz potential
\begin{equation}\label{eq1.6}
\big\| |x|^{-\gamma}I_{\alpha}^kf(x)\big\|_{q,d\mu_{k}}\leq \mathbf{c}_k(\alpha,\beta,\gamma,p,q,d)\big\||x|^{\beta}f(x)\big\|_{p,d\mu_{k}},\quad
f\in \mathcal{S}(\mathbb{R}^d),
\end{equation}
with the sharp constant $\mathbf{c}_k(\alpha,\beta,\gamma,p,q,d)$ and $1<p\le q<\infty$.
On $\Phi_k$, it is equivalent to the Hardy--Rellich type inequality
\[
\big\|
|x|^{-\gamma}f(x)\big\|_{q,d\mu_{k}}\le
\mathbf{c}_{k}(\alpha,\beta,\gamma,p,q,d)\big\||x|^{\beta}(-\Delta_{k})^{\alpha/2}f(x)\big\|_{p,d\mu_{k}}.
\]

Our main results read as follows.

\begin{theorem}\label{thm1.3}
If\, $d\in \mathbb{N}$, $1<p<\infty$, $\gamma<\frac{d_k}{p}$,
$\beta<\frac{d_k}{p'}$, $\alpha>0$, and $\alpha=\gamma+\beta$, then
\[
\begin{aligned}
\mathbf{c}_k(\alpha,\beta,\gamma,p,p,d)&=
2^{-\alpha}\,\frac{\Gamma(\frac{1}{2}(\frac{d_k}{p}-\gamma))
\Gamma(\frac{1}{2}(\frac{d_k}{p'}-\beta))}
{\Gamma(\frac{1}{2}(\frac{d_k}{p'}+\gamma))
\Gamma(\frac{1}{2}(\frac{d_k}{p}+\beta))}\\
&=\mathbf{c}(\alpha,\beta,\gamma,p,p,d_k).
\end{aligned}
\]
\end{theorem}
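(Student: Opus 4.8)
The plan is to reduce the whole statement to the sharp constant being the displayed $\Gamma$-ratio, which I denote $C$. The second equality is not an independent claim: since the hypotheses force $\gamma=\alpha-\beta$, applying Theorem~\ref{thm1.2} in dimension $d_k$ (replacing $d$ by $d_k$ from \eqref{eq1.4}) gives exactly $\mathbf{c}(\alpha,\beta,\gamma,p,p,d_k)=C$, so it suffices to prove $\mathbf{c}_k=C$. Writing $g=|x|^\beta f$, inequality \eqref{eq1.6} becomes the $L^p(\mathbb{R}^d,d\mu_k)$-boundedness of
\[
Tg(x)=|x|^{-\gamma}I_\alpha^k\bigl(|\,\cdot\,|^{-\beta}g\bigr)(x),
\]
and I will show that the operator norm $\|T\|_{p\to p}$ equals $C$. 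Starting from \eqref{eq1.5} and moving the generalized translation onto the radial kernel $|\,\cdot\,|^{\alpha-d_k}$, the operator $T$ acquires a kernel $\widetilde K(x,y)$ with respect to $d\mu_k$ which is $G$-invariant and, because $\gamma+\beta=\alpha$, homogeneous of degree $-d_k$, the homogeneous dimension of $d\mu_k$.

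The first ingredient I would establish is the Dunkl analogue of the Riesz potential of a power. On radial functions $\mathcal{F}_k$ reduces to the Hankel transform of order $\lambda_k$, so $\mathcal{F}_k(|\,\cdot\,|^{-s})$ is a constant multiple of $|\,\cdot\,|^{-(d_k-s)}$; hence, by analytic continuation in $s$ over the strip where the integrals converge,
\[
I_\alpha^k\bigl(|\,\cdot\,|^{-s}\bigr)=2^{-\alpha}\,
\frac{\Gamma(\tfrac12(s-\alpha))\,\Gamma(\tfrac12(d_k-s))}
{\Gamma(\tfrac{s}{2})\,\Gamma(\tfrac12(d_k-s+\alpha))}\,
|\,\cdot\,|^{\alpha-s},\qquad \alpha<s<d_k.
\]
Thus each power $|\,\cdot\,|^{-\tau}$ is an exact eigenfunction of $T$, namely $T(|\,\cdot\,|^{-\tau})=\lambda(\beta+\tau)\,|\,\cdot\,|^{-\tau}$ with $\lambda(s)$ the ratio above. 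A direct substitution shows that for $\tau=d_k/p$ this eigenvalue is precisely $C$, and the admissibility range $\alpha<\beta+d_k/p<d_k$ is exactly equivalent to the hypotheses $\gamma<d_k/p$ and $\beta<d_k/p'$.

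For the upper bound I would apply Schur's test with the power weight $h(x)=|x|^{-d_k/(pp')}$. The two Schur integrals, $\int\widetilde K(x,y)|y|^{-d_k/p}\,d\mu_k(y)$ and $\int\widetilde K(x,y)|x|^{-d_k/p'}\,d\mu_k(x)$, are $T(|\,\cdot\,|^{-d_k/p})$ and $T^{*}(|\,\cdot\,|^{-d_k/p'})$, where $T^{*}\phi=|\,\cdot\,|^{-\beta}I_\alpha^k(|\,\cdot\,|^{-\gamma}\phi)$ is obtained from $T$ by interchanging $\beta\leftrightarrow\gamma$. By the eigen-relation both equal $C$ times the respective power, so Schur's test yields $\|T\|_{p\to p}\le C^{1/p'}C^{1/p}=C$. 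This step requires $\widetilde K\ge0$, and here one uses that the generalized translation of a \emph{radial} function is nonnegative, so that the translate of the radial Riesz kernel $|\,\cdot\,|^{\alpha-d_k}$ is a genuine nonnegative kernel. This positivity, rather than the $L^p$-boundedness of $\tau^y$ (which is open), is what makes Schur's test applicable; establishing and quantifying it is the main obstacle.

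Finally, the matching lower bound follows by restricting \eqref{eq1.6} to radial $f$: for radial functions the measure $d\mu_k$, the weights, and $I_\alpha^k$ all coincide with their classical counterparts in dimension $d_k$, so the best constant over radial $f$ equals $\mathbf{c}(\alpha,\beta,\gamma,p,p,d_k)=C$ by Theorem~\ref{thm1.2}, whence $\mathbf{c}_k\ge C$; equivalently one tests $T$ on Schwartz regularizations of the truncated power $|\,\cdot\,|^{-d_k/p}$ and lets the cut-offs tend to $0$ and $\infty$. Combining the two bounds gives $\mathbf{c}_k=C=\mathbf{c}(\alpha,\beta,\gamma,p,p,d_k)$, as claimed.
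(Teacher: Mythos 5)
Your overall architecture --- Schur's test with the power weight $|x|^{-d_k/(pp')}$ for the upper bound, plus truncated powers for the lower bound --- is a legitimate alternative to the paper's route, and the structural ingredients you invoke (symmetry $\Phi(x,y)=\Phi(y,x)$, homogeneity of degree $\alpha-d_k$, nonnegativity of $\Phi$ via positivity of $\tau^{-y}$ on radial functions) are indeed available: they are properties (1), (2) and (4) of Lemma~\ref{lem2.3}. The paper instead averages over spheres (property (3)) and reduces \eqref{eq1.6} to a sharp Young-type inequality for the Mellin convolution (Lemma~\ref{lem2.1}); for a nonnegative homogeneous kernel this is the same mechanism as your Schur test, and in fact both proofs funnel into the same one-dimensional quantity, $b_{\lambda_k}\int_0^\infty t^{d_k/p-\alpha+\beta}\Phi_0(t,1)\,d\nu(t)$, which is exactly your Schur/eigenvalue constant.

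The genuine gap is how you evaluate that constant. Your eigenvalue identity $I_\alpha^k(|\,\cdot\,|^{-s})=\lambda(s)\,|\,\cdot\,|^{\alpha-s}$ is derived from the multiplier definition $I_\alpha^k=\mathcal{F}_k^{-1}|\,\cdot\,|^{-\alpha}\mathcal{F}_k$ acting on distributions, but Schur's test needs the pointwise identity $\int_{\mathbb{R}^d}\Phi(x,y)|y|^{-s}\,d\mu_k(y)=\lambda(s)|x|^{\alpha-s}$ for the concrete kernel \eqref{eq2.7}--\eqref{eq2.8}. The power $|\,\cdot\,|^{-s}$ lies neither in $\mathcal{S}(\mathbb{R}^d)$ nor in $\Phi_k$, i.e.\ in no class on which the two definitions of $I_\alpha^k$ have been identified, so ``analytic continuation in $s$'' does not by itself transfer the multiplier computation to the kernel representation; that transfer is precisely the nontrivial computational content. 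What is easy is that $\int_{\mathbb{R}^d}\Phi(x,y)|y|^{-s}\,d\mu_k(y)=\kappa(s)|x|^{\alpha-s}$ for \emph{some} finite $\kappa(s)$ (radiality follows from properties (1) and (3), homogeneity from (2), convergence from $\alpha<s<d_k$); what must be proved is $\kappa(s)=\lambda(s)$, and this is exactly what the paper's proof supplies by expanding $\Phi_0(t,1)$ into a series, integrating term by term, and summing a ${}_2F_1$ at $1$ by Gauss's formula. So you must either reproduce such a computation (or a Gaussian-subordination variant), or give a duality argument identifying the kernel operator with the multiplier operator against test functions in $\Phi_k$ --- neither is in your text. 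A second, smaller issue: both for the second equality of the theorem and for your lower bound you invoke Theorem~\ref{thm1.2} ``in dimension $d_k$'', but $d_k=2\lambda_k+2$ from \eqref{eq1.4} is in general not an integer, so Theorem~\ref{thm1.2} does not apply. For the second equality this is harmless, since $\mathbf{c}(\alpha,\beta,\gamma,p,p,d_k)$ simply denotes the formula of Theorem~\ref{thm1.2} with $d_k$ substituted; but for the lower bound you must rely on your fallback --- testing on truncated powers, as in the sharpness part of Lemma~\ref{lem2.1}, which is also how the paper concludes via radial functions --- and not on Theorem~\ref{thm1.2}.
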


%\begin{remark}\label{rem1.1}
%We see that the number $d_k$ plays the role of the generalized dimension of the space $(\mathbb{R}^d,d\mu_k)$.
%\end{remark}

\begin{theorem}\label{thm1.4}
Let \, $d\in \mathbb{N}$, $1\le p\le q<\infty$,
$\gamma<\frac{d_k}{q}$, $\gamma+\beta\geq 0$, $0<\alpha<d_k$, and $\alpha-\gamma-\beta=d_k(\frac{1}{p}-\frac{1}{q})$.

\textup{(a)} \, If \, $1< p\le q<\infty$ and $\beta<\frac{d_k}{p'}$, then $\mathbf{c}_k(\alpha,\beta,\gamma,p,q,d)<\infty$.

\medskip

\textup{(b)} \, If \, $p=1$, $1<q<\infty$, $\beta\le 0$,  and
$\lambda>0$, then
\[\quad
\int_{\{x\in \mathbb{R}^d\colon |x|^{-\gamma}|I_{\alpha}^kf(x)|>\lambda\}}\,d\mu_k(x)\lesssim \Big(%\frac
{\big \||x|^{\beta}f(x)\big \|_{1,d\mu_k}}/{\lambda}\Big)^q,  \qquad f\in \mathcal{S}(\mathbb{R}^d).
\]
\end{theorem}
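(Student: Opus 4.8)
The plan is to deduce both parts from mapping properties of the Dunkl fractional maximal function
\[
M_{\beta}^{k}f(x)=\sup_{r>0}\frac{r^{\beta}}{\mu_{k}(B_{r})}\int_{B_{r}}|\tau^{-y}f(x)|\,d\mu_{k}(y),\qquad 0\le\beta<d_{k},
\]
together with the observations that $\mu_{k}(B_{r})\asymp r^{d_{k}}$, that the kernel $|y|^{\alpha-d_{k}}$ is radial, and that the scaling relation $\alpha-\gamma-\beta=d_{k}(\frac1p-\frac1q)$ makes \eqref{eq1.6} invariant under the dilations $f\mapsto f(\delta\,\Cdot)$. Writing $M^{k}=M_{0}^{k}$ for the Dunkl Hardy--Littlewood maximal operator, I will use as black boxes its $L^{p}(d\mu_{k})$-boundedness for $p>1$ and its weak-$(1,1)$ bound. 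The gain in passing to $M_{\beta}^{k}$ is that it is a supremum of \emph{positive} averaging operators, so the estimates I need do not require any $L^{p}$-control of $\tau^{y}$ itself.

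For part (a) I would first dominate the potential by its absolute value, $|I_{\alpha}^{k}f(x)|\le(\gamma^{k}_{\alpha})^{-1}\int_{\mathbb{R}^{d}}|\tau^{-y}f(x)|\,|y|^{\alpha-d_{k}}\,d\mu_{k}(y)$, and then apply a Welland-type pointwise inequality: for $0<\epsilon<\min(\alpha,d_{k}-\alpha)$, splitting the $y$-integral into dyadic shells around the origin and comparing with the averages defining $M_{\beta}^{k}$ gives
\[
|I_{\alpha}^{k}f(x)|\lesssim\bigl(M_{\alpha-\epsilon}^{k}f(x)\bigr)^{1/2}\bigl(M_{\alpha+\epsilon}^{k}f(x)\bigr)^{1/2}.
\]
Distributing the output weight $|x|^{-\gamma}$ and the input weight $|x|^{\beta}$ across the two factors and applying H\"older, the matter reduces to weighted strong-type bounds $\bigl\||x|^{-\gamma_{\pm}}M_{\alpha\pm\epsilon}^{k}f\bigr\|_{q_{\pm},d\mu_{k}}\lesssim\bigl\||x|^{\beta}f\bigr\|_{p,d\mu_{k}}$ with parameters fixed by scaling for $\epsilon$ small. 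These I would prove by a Calder\'on--Zygmund/covering argument in the doubling space $(\mathbb{R}^{d},|\Cdot|,d\mu_{k})$, the conditions $\gamma<\frac{d_{k}}{q}$, $\beta<\frac{d_{k}}{p'}$ and $\gamma+\beta\ge0$ being exactly those that keep the power weights in the admissible range.

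For part (b) duality is unavailable, so I would estimate the distribution function directly. Fixing $\lambda>0$ and $x$, I split the defining integral at a radius $\rho$: the inner part $\int_{|y|<\rho}$ is bounded by $\lesssim\rho^{\alpha}M^{k}f(x)$ by the same dyadic-shell estimate, while for the outer part $\int_{|y|\ge\rho}$ I pass to the convolution form $I_{\alpha}^{k}f(x)=(\gamma^{k}_{\alpha})^{-1}\int f(y)\,\tau^{x}(|\Cdot|^{\alpha-d_{k}})(y)\,d\mu_{k}(y)$, in which $f(y)$ appears directly, and use the pointwise bound for the Dunkl translate of the radial Riesz kernel together with $\beta\le0$ to control the tail by $\rho^{\alpha-d_{k}}\||x|^{\beta}f\|_{1,d\mu_{k}}$. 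Choosing $\rho$ to balance the two terms (its exponent fixed by $\alpha-\gamma-\beta=d_{k}/q'$) produces a weighted Hedberg-type pointwise bound, and the weak-$(1,q)$ estimate then follows from a weak-type bound for $M^{k}$ against the power weight, the hypothesis $\beta\le0$ ensuring the input weight is handled correctly.

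The main obstacle, common to both parts, is that $\tau^{y}$ is neither positive nor known to be $L^{p}$-bounded for $p\ne2$, so neither $|I_{\alpha}^{k}f|\le I_{\alpha}^{k}|f|$ nor a naive $L^{1}$ bound on $\int|\tau^{-y}f|\,d\mu_{k}$ is available. The resolution is to exploit only the radial structure of the kernel: the averages entering $M_{\beta}^{k}$ and the translates $\tau^{x}(|\Cdot|^{\alpha-d_{k}})$ of radial functions do obey the required $L^{p}$-, weak-type and pointwise bounds, since after the spherical-harmonic/Hankel reduction already used for Theorem~\ref{thm1.3} they become one-dimensional Bessel-type operators in the parameter $\lambda_{k}$, that is, operators effectively of dimension $d_{k}$. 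Establishing these radial estimates is the technical heart of the argument and is precisely what lets every classical step proceed with $d$ replaced by $d_{k}$.
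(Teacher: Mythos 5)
Your strategy breaks down at its foundation: the fractional maximal function you define, with $|\tau^{-y}f(x)|$ \emph{inside} the integral, is not an object whose boundedness is known in the Dunkl setting. Since $\tau^{y}$ is neither positive nor known to be $L^{p}$-bounded for $p\neq 2$ (as you yourself note), nothing controls $\int_{B_{r}}|\tau^{-y}f(x)|\,d\mu_{k}(y)$ for general $f$; controlling it is essentially the open problem you cite, not a consequence of radiality. The Thangavelu--Xu maximal theorems you invoke as black boxes concern $\sup_{r}\mu_{k}(B_{r})^{-1}\bigl|\int_{B_{r}}\tau^{-y}f(x)\,d\mu_{k}(y)\bigr|$, with the absolute value \emph{outside}: only in that form does the commutativity identity $\int\tau^{-y}f(x)\chi_{B_{r}}(y)\,d\mu_{k}(y)=\int f(y)\tau^{-y}\chi_{B_{r}}(x)\,d\mu_{k}(y)$ turn the average into a pairing of $f$ against the \emph{positive} function $\tau^{-y}\chi_{B_{r}}(x)$, which is what your ``exploit only the radial structure'' resolution actually requires. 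Your Welland-type pointwise bound can be salvaged for that outside-absolute-value operator (write the truncated kernel as a superposition of balls, $|y|^{\alpha-d_{k}}\chi_{B_{\rho}}(y)=\int_{0}^{\infty}\chi_{B_{r(t)}}(y)\,dt$, before taking absolute values), but then you run into a second, more serious problem.

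The weighted estimates you then need for $M^{k}_{\alpha\pm\epsilon}$ --- strong bounds from $L^{p}(|x|^{\beta p}d\mu_{k})$ to $L^{q_\pm}(|x|^{-\gamma_\pm q_\pm}d\mu_{k})$, and in part (b) a power-weighted weak-type bound for $M^{k}$ --- are precisely Theorem~\ref{thm1.5} of the paper, which is \emph{deduced from} Theorem~\ref{thm1.4} via $M^{k}_{\alpha}f\le M^{k}_{\alpha}|f|\lesssim I^{k}_{\alpha}|f|$. Taking them as input is circular unless you prove them independently, and the proposed Calder\'on--Zygmund/covering argument does not do this: the Dunkl maximal operator is \emph{not} the Hardy--Littlewood maximal operator of the homogeneous space $(\mathbb{R}^{d},|\Cdot|,d\mu_{k})$, because $\tau^{-y}\chi_{B_{r}}$ spreads its mass over the whole $G$-orbit of the ball, so covering lemmas and Muckenhoupt--Wheeden power-weight technology for doubling spaces do not apply to it directly. (A smaller but real problem of the same kind occurs in your part (b): the tail bound $\rho^{\alpha-d_{k}}\||y|^{\beta}f\|_{1,d\mu_{k}}$ with $\beta\le 0$ does not follow from $\tau^{-y}(|\Cdot|^{\alpha-d_{k}}\chi_{\{|\Cdot|\ge\rho\}})(x)\lesssim\rho^{\alpha-d_{k}}$ alone, since $\|f\|_{1,d\mu_k}\ge\||y|^{\beta}f\|_{1,d\mu_k}$ goes the wrong way; one needs decay of the translated kernel in $|y|$, i.e.\ a region analysis.) The paper avoids maximal functions entirely: it represents the kernel as $\Phi_{\alpha}(x,y)=\int(|x|^{2}+|y|^{2}-2\<y,\eta\>)^{(\alpha-d_{k})/2}\,d\mu^{k}_{x}(\eta)$ with $\supp\mu^{k}_{x}\subset B_{|x|}$ (R\"osler positivity), splits $\mathbb{R}^{d}\times\mathbb{R}^{d}$ into the diagonal region $2^{-1}|y|<|x|<2|y|$ and two off-diagonal regions, absorbs the weights into the kernel on the diagonal (reducing to the known unweighted case), and controls the off-diagonal pieces by the sharp Hardy and Bellman inequalities of Theorems~\ref{thm3.1} and~\ref{thm3.2}; part (b) follows the same three-piece decomposition. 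To make your route rigorous you would have to first establish the weighted maximal bounds by some such kernel-level argument --- at which point the maximal function is no longer doing any work.
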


For $k\equiv 0$, Theorems \ref{thm1.3} and \ref{thm1.4} become
Theorems \ref{thm1.1} and \ref{thm1.2}, therefore it is enough to consider the case $k\not\equiv 0$,
i.e., $\lambda_k=\frac{d}{2}-1+\sum_{a\in R_+}k(a)>-1/2$ and
$d_k=2\lambda_k+2>1$. It is clear that $d_k$ plays the role of the
generalized dimension of the space $(\mathbb{R}^d,d\mu_k)$.

 %Theorem~\ref{thm1.4} is complete analogue of Theorem~\ref{thm1.1}.
For the reflection group $\mathbb{Z}_2^d$ and $\gamma=\beta=0$, Theorem~\ref{thm1.4} was proved in \cite{ThaXu07}. For arbitrary reflection group $G$ and $\gamma=\beta=0$, it was proved by S.~Hassani, S.~Mustapha and M.~Sifi \cite{HasMusSif09}.
Following an idea from \cite{ThaXu07}, we have recently given another proof in  \cite{GorIvaTik17}.
Regarding the weighted setting, part (a) was proved in \cite{AbdLit15} in the case
$q=p$ %, the fact that $\mathbf{c}(\alpha,\beta,\gamma,p,q,k)<\infty$ was proved in  \cite{AbdLit15} %obtained the finiteness of the constant $\mathbf{c}(\alpha,\beta,\gamma,p,q,k)$
 under more restrictive conditions
$1<p<\infty,$ $0<\gamma<\frac{d_k}{p}$, $0<\beta<\frac{d_k}{p'}$,  and $\alpha>0.$

To estimate the $L^p$-norm of operator $I_{\alpha}^k$, S.~Thangavelu and Y.~Xu \cite {ThaXu07} used the maximal function, defined  for $f\in \mathcal{S}(\mathbb{R}^d)$ as follows
\[
M^kf(x)=\sup\limits_{r>0}\frac{|\int_{\mathbb{R}^d}\tau^{-y}f(x)\chi_{B_r}(y)\,d\mu_k(y)|}{\int_{B_r}\,d\mu_k},
\]
where $B_r=\{x\colon |x|\le r\}$. %is the Euclidean ball of radius $r$ centered at 0.
  They proved the strong $L^p$-boundedness of $M^k$ for $1<p<\infty$ and the weak boundedness for $p=1$ \cite {ThaXu05}.

We will use Theorem \ref{thm1.4} to obtain weighted boundedness of the fractional maximal function $M^k_{\alpha}f$, $0\le\alpha<d_k$,
 %for the Dunkl transform
  given by
\begin{align*}
M^k_{\alpha}f(x)&=\sup\limits_{r>0}r^{\alpha-d_k}\Bigl|\int_{\mathbb{R}^d}\tau^{-y}f(x)\chi_{B_r}(y)\,d\mu_k(y)\Bigr|\\
&=\sup\limits_{r>0}r^{\alpha-d_k}\Bigl|\int_{\mathbb{R}^d}f(x)\tau^{-y}\chi_{B_r}(x)\,d\mu_k(y)\Bigr|.
\end{align*}
If $\alpha=0$, then $M^k_{0}$ coincides with  $M^k$ up to a constant.
%fractional maximal function $M^k_{0}$ coincides with maximal function $M^k$ up to a numerical factor.
%We have
%\vspace*{-.5ex}\[M^k_{\alpha}f(x)=
%\sup\limits_{r>0}r^{\alpha-d_k}\Bigl|\int_{0}^{r}T^tf(x)\,d\nu_{\lambda_k}(t)\Bigr|.
%\vspace*{-.5ex}\]
Since  $\tau^y$ is a positive operator on radial functions \cite{Ros03,ThaXu05},
and using
\[
M^k_{\alpha}f(x)\leq M^k_{\alpha}|f|(x)\lesssim I_{\alpha}^k|f|(x),
\]
Theorem~\ref{thm1.4} implies the boundedness conditions of the fractional maximal function.

\begin{theorem}\label{thm1.5}
Let \, $d\in \mathbb{N}$, $1\le p\le q<\infty$,
$\gamma<\frac{d_k}{q}$, $\gamma+\beta\geq 0$, $0<\alpha<d_k$, $\alpha-\gamma-\beta=d_k(\frac{1}{p}-\frac{1}{q})$, and $f\in \mathcal{S}(\mathbb{R}^d)$.

\textup{(a)} \, If \, $1< p\le q<\infty$ and $\beta<\frac{d_k}{p'}$, then
\[
\big\||x|^{-\gamma}M^k_{\alpha}f(x)\big\|_{q,d\mu_{k}}\lesssim \big\| |x|^{\beta}f(x)\big\|_{p,d\mu_{k}}.
\]

\textup{(b)} \, If \, $p=1$, $1<q<\infty$, $\beta\le 0$, and $\lambda>0$, then
\[
\int_{\{x\in \mathbb{R}^d\colon |x|^{-\gamma}|M^k_{\alpha}f(x)|>\lambda\}}\,d\mu_k(x)\lesssim \Big (%\frac
{\big\||x|^{\beta}f(x)\big\|_{1,d\mu_{k}}}/{\lambda}\Big)^q.
\]
\end{theorem}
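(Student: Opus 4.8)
The plan is to reduce Theorem~\ref{thm1.5} directly to Theorem~\ref{thm1.4} via the pointwise domination of the fractional maximal function by the D-Riesz potential. The excerpt already records the key chain of inequalities,
\[
M^k_{\alpha}f(x)\leq M^k_{\alpha}|f|(x)\lesssim I_{\alpha}^k|f|(x),
\]
so once this domination is justified, both parts (a) and (b) follow by applying the corresponding parts of Theorem~\ref{thm1.4} to the nonnegative function $|f|$, since $|f|\in\mathcal S(\mathbb R^d)$ up to the usual density considerations and the weighted $(L^p,L^q)$ and weak $(L^1,L^q)$ estimates there are stated exactly under the same hypotheses on $d,p,q,\gamma,\beta,\alpha$.

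The first step I would carry out is to establish the pointwise bound rigorously. Fix $x$ and $r>0$ and write, using the second form of $M^k_\alpha$,
\[
r^{\alpha-d_k}\Bigl|\int_{\mathbb{R}^d}f(x)\tau^{-y}\chi_{B_r}(x)\,d\mu_k(y)\Bigr|
\le r^{\alpha-d_k}\int_{\mathbb{R}^d}\tau^{-y}|f|(x)\chi_{B_r}(y)\,d\mu_k(y),
\]
where I have moved the absolute value inside and used that $\tau^{-y}$ is a \emph{positive} operator on radial functions (here $\chi_{B_r}$ is radial), a fact quoted from \cite{Ros03,ThaXu05}; positivity guarantees $\tau^{-y}\chi_{B_r}\ge 0$ and lets me replace $f$ by $|f|$ under the integral. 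This yields $M^k_\alpha f(x)\le M^k_\alpha|f|(x)$. For the second inequality I compare with the D-Riesz potential \eqref{eq1.5}: on the ball $B_r$ the kernel satisfies $|y|^{\alpha-d_k}\gtrsim r^{\alpha-d_k}$ when $|y|\le r$ (because $\alpha-d_k<0$), so
\[
I_{\alpha}^k|f|(x)=(\gamma^k_{\alpha})^{-1}\int_{\mathbb{R}^d}\tau^{-y}|f|(x)\,|y|^{\alpha-d_k}\,d\mu_k(y)
\gtrsim r^{\alpha-d_k}\int_{B_r}\tau^{-y}|f|(x)\,d\mu_k(y),
\]
and taking the supremum over $r>0$ gives $M^k_\alpha|f|(x)\lesssim I_{\alpha}^k|f|(x)$. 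Here I again use that $\tau^{-y}|f|(x)\ge 0$, so that discarding the contribution from $|y|>r$ only decreases the integral.

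With the pointwise estimate in hand, the conclusion is immediate. For part (a), apply the strong inequality \eqref{eq1.6} of Theorem~\ref{thm1.4}(a) to $|f|$ in place of $f$:
\[
\big\||x|^{-\gamma}M^k_{\alpha}f(x)\big\|_{q,d\mu_{k}}
\lesssim \big\||x|^{-\gamma}I_{\alpha}^k|f|(x)\big\|_{q,d\mu_{k}}
\lesssim \big\||x|^{\beta}f(x)\big\|_{p,d\mu_{k}},
\]
using $\||x|^{\beta}|f|\|_{p,d\mu_k}=\||x|^{\beta}f\|_{p,d\mu_k}$. For part (b), the same substitution into Theorem~\ref{thm1.4}(b) together with the inclusion of sublevel sets $\{|x|^{-\gamma}|M^k_\alpha f|>\lambda\}\subset\{|x|^{-\gamma}|I_\alpha^k|f||>c\lambda\}$ (for the implicit constant $c$ from the domination) yields the claimed weak-type bound. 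The only genuine obstacle is the first step: one must be careful that the radiality of $\chi_{B_r}$ is exactly what makes the positivity of $\tau^{-y}$ applicable (it is not positive in general), and that for $f\in\mathcal S(\mathbb R^d)$ the relevant integrals converge so that interchanging the supremum with the passage from $f$ to $|f|$ is legitimate; the remaining norm manipulations are routine.
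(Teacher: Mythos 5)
Your overall strategy coincides with the paper's one-line proof: establish the pointwise domination $M^k_\alpha f\le M^k_\alpha|f|\lesssim I^k_\alpha|f|$ and then apply Theorem~\ref{thm1.4} to $|f|$. Your first inequality is argued correctly: via the commutation identity \eqref{eq2.5} the quantity inside the supremum equals $\int_{\mathbb{R}^d}f(y)\,\tau^{-y}\chi_{B_r}(x)\,d\mu_k(y)$, and since $\chi_{B_r}$ is radial and nonnegative, positivity of $\tau^{-y}$ on \emph{radial} functions gives $\tau^{-y}\chi_{B_r}(x)\ge0$, so the absolute value may be moved onto $f$.

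The gap is in your justification of the second inequality. You discard the region $|y|>r$ and lower-bound the kernel on $B_r$ ``using that $\tau^{-y}|f|(x)\ge 0$''. That is positivity of the generalized translation applied to $|f|$, which is \emph{not} radial; the paper states explicitly that $\tau^y$ is in general not a positive operator, and positivity is only known on radial functions. So the sign property you invoke is exactly what is unavailable, and the step fails as written. The repair is to keep $\tau^{-y}$ acting on radial kernels throughout: by \eqref{eq2.5},
\[
r^{\alpha-d_k}\int_{\mathbb{R}^d}\tau^{-y}|f|(x)\chi_{B_r}(y)\,d\mu_k(y)
=\int_{\mathbb{R}^d}|f(y)|\,r^{\alpha-d_k}\,\tau^{-y}\chi_{B_r}(x)\,d\mu_k(y),
\]
then use the pointwise inequality of radial profiles $r^{\alpha-d_k}\chi_{B_r}(z)\le|z|^{\alpha-d_k}$ (valid because $\alpha<d_k$) together with positivity, hence monotonicity, of $\tau^{-y}$ on radial functions --- e.g.\ via R\"osler's formula $\tau^{-y}f(x)=\int_{\mathbb{R}^d} f_0\bigl(\sqrt{|x|^2+|y|^2-2\<y,\eta\>}\bigr)\,d\mu_x^k(\eta)$ with the probability measure $\mu_x^k$ --- to conclude $r^{\alpha-d_k}\tau^{-y}\chi_{B_r}(x)\le\tau^{-y}(|\Cdot|^{\alpha-d_k})(x)=\gamma^k_\alpha\,\Phi(x,y)$, by property (4) of Lemma~\ref{lem2.3}. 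Integrating against $|f(y)|$ and using the representation \eqref{eq2.7} yields $M^k_\alpha|f|(x)\le\gamma^k_\alpha\, I^k_\alpha|f|(x)$, after which your reductions for parts (a) and (b) go through. A secondary point: $|f|\notin\mathcal{S}(\mathbb{R}^d)$, so Theorem~\ref{thm1.4} cannot literally be applied to $|f|$; one should note instead that its proof is a bilinear estimate valid for all nonnegative measurable functions, which is a cleaner statement than an appeal to ``density considerations'' (density in the weighted $L^p$ norm does not by itself control the maximal operator on both sides).
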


In the case $\gamma=\beta=0$ Theorem~\ref{thm1.5} was proved in \cite{HasMusSif09}.

The paper is organized as follows. In the next section, we obtain the sharp inequalities for Mellin convolution and
investigate the following representation of the Riesz potential
$$I_{\alpha}^{k}f(x)=\int_{\mathbb{R}^{d}}f(y)\Phi(x,y)\,d\mu_{k}(y)
$$
and basic properties of the kernel$$%\begin{equation}\label{eq2.8}
\Phi(x,y)=\frac{2^{d_k/2-\alpha}}{\Gamma(\alpha/2)}
\int_{0}^{\infty}s^{(d_k-\alpha)/2-1}\tau^{-y}(e^{-s|\Cdot|^{2}})(x)\,ds,\quad (x,y)\neq (0,0).
$$%\end{equation}
In Section 3, we prove sharp $(L_p,L_p)$  Hardy's inequalities with weights for the averaging  operator
%Define Hardy and Bellman operators
$
Hf(x)=\int_{|y|\le |x|}f(y)\,d\mu_k(y).
$
In the classical setting ($k=0$), this result was proved by M.~Christ and L.
Grafakos~\cite{ChrGra95} and Z.W.~Fu, L.~Grafakos, S.Z.~Lu and
F.Y.~Zhao~\cite{FuGraLuZha12}. Sections 4 and 5 are devoted to the proofs of
Theorems \ref{thm1.3} and \ref{thm1.4} corespondingly. We finish with Section 6, which contains
some important properties of the spaces $\Phi_k
$
and
$\Psi_k$.

\section{Notations and auxiliary statements}
Set as usual
 $\mathbb{R}_{+}=[0,\infty)$,
$\mathbb{S}^{d-1}=\{x\in \mathbb{R}^{d}\colon |x|=1\}$ and  $x=rx'\in \mathbb{R}^{d}$, $r=|x|\in \mathbb{R}_{+}$, $x'\in \mathbb{S}^{d-1}$.
Let
\[
d\nu(r)=r^{-1}\,dr,\quad d\nu_{\lambda}(r)=b_{\lambda}r^{2\lambda+1}\,dr,\quad b_{\lambda}^{-1}=2^{\lambda}\Gamma(\lambda+1),\quad \lambda\geq-1/2,
\]
be the measures on $\mathbb{R}_{+}$,  $$d\sigma_{k}(x')=a_{k}v_{k}(x')\,dx'$$ be the probability
measure on $\mathbb{S}^{d-1}$,  and
\[
d\mu_{k}(x)=c_{k}v_{k}(x)\,dx,\quad dm_{k}(x)=d\nu(r)\,d\sigma_{k}(x')
\]
be the measures on $\mathbb{R}^{d}$.

Note that
\begin{equation}\label{eq2.1}
d\mu_{k}(x)=d\nu_{\lambda_{k}}(r)\,d\sigma_{k}(x')=b_{\lambda_{k}}|x|^{d_k}\,dm_{k}(x),
\end{equation}
where $\lambda_k$ and $d_k$ are defined in \eqref{eq1.4}.

Let $L^{p}(X,d\mu)$, $1\leq p\leq \infty$, be the Banach space with the norm
\[
\|f\|_{p,d\mu}=\begin{cases}
\Bigl(\int_{X}|f|^{p}\,d\mu\Bigr)^{1/p},&p<\infty,\\
\supvrai_{X}|f|,&p=\infty.
\end{cases}
\]

Depending on the context, we assume that $L^{p}=L^{p}(X,d\mu)$ and %accordingly
$\|f\|_{p}=\|f\|_{p,d\mu}$.

\subsection{Convolution inequalities.}
The Mellin convolution is given by
\[
A_{g}f(r)=(f*g)(r)=\int_{0}^{\infty}f(r/t)g(t)\,d\nu(t).
\]
We will frequently use the fact that
\begin{equation}\label{eq2.2}
(f*g)(r)=(g*f)(r).
\end{equation}

\begin{lemma}\label{lem2.1}
Let $L^{p}=L^{p}(\mathbb{R}_{+}, d\nu)$, $1\leq p\leq\infty$. If $f\in L^{p}$, $h\in L^{p'}$, $g\in L^{1}$, then
\[
\|f*g\|_{p}\leq \|g\|_{1}\|f\|_{p},
\]
or
\begin{equation*}%\label{eq2.3}
\Bigl|\int_{0}^{\infty}\int_{0}^{\infty}h(r)f(t)g(r/t)\,d\nu(t)\,d\nu(r)\Bigr|\leq
\|g\|_{1}\|h\|_{p'}\|f\|_{p}.
\end{equation*}
If $g\geq 0$, then
\begin{equation}\label{eq2.4}
\|A_{g}\|_{p\to p}=\|g\|_{1},
\end{equation}
or
\[
\sup_{\|f\|_{p}\leq 1}\,\sup_{\|h\|_{p'}\leq 1}\,
\Bigl|\int_{0}^{\infty}h(r)A_{g}f(r)\,d\nu(r)\Bigr|=\|g\|_{1}.
\]
\end{lemma}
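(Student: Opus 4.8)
The plan is to exploit that $d\nu(r)=dr/r$ is the Haar measure of the multiplicative group $(\mathbb{R}_+,\times)$, so that $A_g$ is the corresponding group convolution. Concretely, the logarithmic substitution $r=e^u$, $t=e^v$ carries $d\nu$ to Lebesgue measure on $\mathbb{R}$ and turns $A_gf$ into the ordinary convolution $\tilde f*\tilde g$ of $\tilde f(u)=f(e^u)$ and $\tilde g(u)=g(e^u)$; under it $\|f\|_p=\|\tilde f\|_{L^p(\mathbb{R})}$ and $\|g\|_1=\|\tilde g\|_{L^1(\mathbb{R})}$.

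For the upper bound I would argue directly, without even changing variables: since $d\nu$ is invariant under the dilation $r\mapsto r/t$, one has $\|f(\Cdot/t)\|_p=\|f\|_p$ for every $t>0$, and Minkowski's integral inequality gives
\[
\|f*g\|_p=\Bigl\|\int_0^\infty f(\Cdot/t)g(t)\,d\nu(t)\Bigr\|_p\le\int_0^\infty\|f(\Cdot/t)\|_p\,|g(t)|\,d\nu(t)=\|f\|_p\|g\|_1;
\]
equivalently this is Young's inequality $L^1*L^p\to L^p$ in the additive picture. The bilinear reformulation then follows by Fubini and commutativity \eqref{eq2.2}, which identify the double integral with $\int_0^\infty h(r)(g*f)(r)\,d\nu(r)$, after which H\"older's inequality and the operator bound yield the factor $\|g\|_1\|h\|_{p'}\|f\|_p$.

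The substantive step is the sharpness $\|A_g\|_{p\to p}=\|g\|_1$ for $g\ge0$; since $\le$ is already in hand, the difficulty is to produce admissible $f,h$ for which the convolution recovers essentially the full mass $\int g\,d\nu$. Working in the additive picture, I would take the normalized indicators $\tilde f=(2N)^{-1/p}\chi_{[-N,N]}$ and $\tilde h=(2N)^{-1/p'}\chi_{[-N,N]}$, so that $\|f\|_p=\|h\|_{p'}=1$. A direct computation of the overlap of two intervals gives
\[
\int_{\mathbb{R}}\tilde h(u)\tilde f(u-v)\,du=\frac{1}{2N}\,\bigl(2N-|v|\bigr)\quad(|v|\le 2N),
\]
and zero otherwise, whence the bilinear form equals $\int_{|v|\le 2N}\bigl(1-|v|/(2N)\bigr)\tilde g(v)\,dv$. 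Since $g\ge0$ and $g\in L^1$, dominated convergence shows this tends to $\int_{\mathbb{R}}\tilde g(v)\,dv=\|g\|_1$ as $N\to\infty$, which forces the reverse inequality and hence equality. The same family works verbatim at the endpoints $p=1$ and $p=\infty$, since $1/p+1/p'=1$ makes the normalizing constants collapse to $(2N)^{-1}$ regardless of $p$. I expect this last convergence argument---ensuring the mollified indicators capture the total mass uniformly in the tails of $g$---to be the only point requiring care.
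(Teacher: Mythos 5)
Your proof is correct and essentially identical to the paper's: the paper's extremizing pair $f=(2\lambda)^{-1/p}\chi_{[e^{-\lambda},e^{\lambda}]}$, $h=(2\lambda)^{-1/p'}\chi_{[e^{-\lambda},e^{\lambda}]}$ is exactly your pair of normalized indicators read through the substitution $r=e^{u}$, and the paper likewise finishes with the triangular-weight computation and dominated convergence. The only cosmetic differences are that you obtain the upper bound from Minkowski's integral inequality and dilation invariance of $d\nu$ (covering all $1\le p\le\infty$ at once) where the paper uses H\"older plus Fubini, and that you handle the endpoints $p=1,\infty$ with the same family of test functions while the paper treats them by separate ad hoc arguments.
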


\begin{proof}
For the classical convolution on the $\mathbb{R}$, see, e.g.,
\cite{Her77}.  We sketch  the proof here for completeness of the {exposition.}
For $1<p<\infty$, using H\"{o}lder's inequality, we obtain
\[
\Bigl|\int_{0}^{\infty}f(r/t)g(t)\,d\nu(t)\Bigr|\leq \Bigl(\int_{0}^{\infty}|f(r/t)|^p\,|g(t)|\,d\nu(t)\Bigr)^{1/p}\Bigl(\int_{0}^{\infty}|g(t)|\,d\nu(t)\Bigr)^{1/p'}
\]
and
\begin{align*}
\|f*g\|_{p}&\leq \Bigl(\int_{0}^{\infty}\int_{0}^{\infty}|f(r/t)|^p\,|g(t)|\,d\nu(t)\,d\nu(r)\Bigr)^{1/p}\|g\|_1^{1/p'}\\
&=\Bigl(\int_{0}^{\infty}|g(t)|\int_{0}^{\infty}|f(r/t)|^p\,d\nu(r)\,d\nu(t)\Bigr)^{1/p}\|g\|_1^{1/p'}
=\|f\|_{p}\,\|g\|_1.
\end{align*}

Let $g\ge 0$.
If $p=1$, $f\in L^1$, $f\ge 0$, then
\begin{align*}
\|A_{g}f\|_1&=\int_{0}^{\infty}\int_{0}^{\infty}f(r/t)g(t)\,d\nu(t)\,d\nu(r)\\
&=\int_{0}^{\infty}f(r)\,d\nu(r)\int_{0}^{\infty}g(t)\,d\nu(t)=\|g\|_1\|f\|_1,
\end{align*}
which gives \eqref{eq2.4}.
If $p=\infty$, we define  $f=\chi_{[\lambda,1/\lambda]}$, $0<\lambda<1$. Then $\|f\|_{\infty}=1$ and, for $r\in[1,2]$,
\begin{align*}
A_{g}f(r)&=\int_{0}^{\infty}f(t)g(r/t)\,d\nu(t)=\int_{\lambda}^{1/\lambda}g(r/t)\,d\nu(t)\\
&=\int_{r\lambda}^{r/\lambda}g(t)\,d\nu(t)\ge\int_{2\lambda}^{1/\lambda}g(t)\,d\nu(t)\to\|g\|_1,\qquad \lambda\to 0.
\end{align*}
If $1<p<\infty$, $f=(2\lambda)^{-1/p}\chi_{[e^{-\lambda}, e^{\lambda}]}$, and $h=(2\lambda)^{-1/p'}\chi_{[e^{-\lambda}, e^{\lambda}]}$, then $\|f\|_{p}=\|h\|_{p'}=1$ and
by the Lebesgue dominated convergence theorem
\begin{align*}
\|A_{g}\|_{p\to p}&\geq\lim\limits_{\lambda\to\infty}\Bigl\{(2\lambda)^{-1}
\int_{e^{-\lambda}}^{e^{\lambda}}\int_{e^{-\lambda}}^{e^{\lambda}}g(r/t)\,d\nu(r)\,d\nu(t)\Bigr\}\\
&=\lim\limits_{\lambda\to\infty}\Bigl\{(2\lambda)^{-1}
\int_{e^{-\lambda}}^{e^{\lambda}}\int_{e^{-\lambda}/t}^{e^{\lambda}/t}g(r)\,d\nu(r)\,d\nu(t)\Bigr\}\\
&=\lim\limits_{\lambda\to\infty}(2\lambda)^{-1}
\Bigl\{\int_{e^{-2\lambda}}^{1}\int_{e^{-\lambda}/r}^{e^{\lambda}}
\,\frac{dt}{t}\,g(r)\,\frac{dr}{r}+\int_{1}^{e^{2\lambda}}\int_{e^{-\lambda}}^{e^{\lambda}/r}
\,\frac{dt}{t}\,g(r)\,\frac{dr}{r}\Bigr\}\\
&=\lim\limits_{\lambda\to\infty}(2\lambda)^{-1}
\Bigl\{\int_{e^{-2\lambda}}^{1}g(r)(2\lambda+\ln r)\,\frac{dr}{r}+
\int_{1}^{e^{2\lambda}}g(r)(2\lambda-\ln r)\,\frac{dr}{r}\Bigr\}\\
&=\int_{0}^{\infty}g(r)\,\frac{dr}{r}=\|g\|_1.
\qedhere\end{align*}
\end{proof}

\subsection{
A representation of the Riesz potential.}
We will use the following representation (see \cite{AbdLit15}), which is  different from
the definition (\ref{eq1.5}):
%Thus, for the D-Riesz potential the following representation holds (see \cite{AbdLit15})
\begin{equation}\label{eq2.7}
I_{\alpha}^{k}f(x)=\int_{\mathbb{R}^{d}}f(y)\Phi(x,y)\,d\mu_{k}(y),
\end{equation}
where % the kernel
\begin{equation}\label{eq2.8}
\Phi(x,y)=\frac{2^{d_k/2-\alpha}}{\Gamma(\alpha/2)}
\int_{0}^{\infty}s^{(d_k-\alpha)/2-1}\tau^{-y}(e^{-s|\Cdot|^{2}})(x)\,ds,\quad (x,y)\neq (0,0).
\end{equation}
%\textbf{здесь была опечатка $2^{\lambda_{k}+1-\alpha/2}$?}
To verify \eqref{eq2.7}, we first remark that
 %Lemma~\ref{lem2.2} means that
the convolution
\[
(f\ast {}_{k}g)(x)=\int_{\mathbb{R}^{d}}\tau^{-y}f(x)\,g(y)\,d\mu_{k}(y)
\]
is commutative, i.e.,
$%\[
(f\ast {}_{k}g)(x)=(g\ast {}_{k}f)(x).
$ %\]
Indeed, we have the following

\begin{lemma}\label{lem2.2}
If $f\in\mathcal{S}(\mathbb{R}^d)$, $g\in L^{1}(\mathbb{R}^{d},d\mu_{k})$, and $f_t(x)=f(tx)$, then
\begin{equation}\label{eq2.5}
\int_{\mathbb{R}^{d}}\tau^{-y}f(x)\,g(y)\,d\mu_{k}(y)=
\int_{\mathbb{R}^{d}}f(y)\tau^{-y}g(x)\,d\mu_{k}(y),
\end{equation}
\begin{equation}\label{eq2.6}
\mathcal{F}_k(f_t)(z)=\frac{1}{t^{d_k}}\mathcal{F}_k(f)\Bigl(\frac{z}{t}\Bigr),\quad \tau^{y}(f_t)(x)=\tau^{ty}f(tx).
\end{equation}
\end{lemma}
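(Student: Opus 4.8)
The plan is to prove both assertions of Lemma~\ref{lem2.2} by reducing everything to identities on the Dunkl transform side, where the generalized translation $\tau^y$ becomes multiplication by the kernel $e_k$. For the commutativity \eqref{eq2.5}, I would first establish the identity for two Schwartz functions $f,g$, where all integrals converge absolutely and the transform-side manipulation is clean, and then extend to $g\in L^1(\mathbb{R}^d,d\mu_k)$ by density and the $L^1$-boundedness packaged in the convolution structure.

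First I would compute the left-hand side of \eqref{eq2.5} using the defining relation $\mathcal{F}_k(\tau^yf)(z)=e_k(y,z)\mathcal{F}_k(f)(z)$. Writing $\tau^{-y}f(x)$ via the inversion formula \eqref{eq1.3} and inserting it into $\int \tau^{-y}f(x)\,g(y)\,d\mu_k(y)$, I would interchange the order of integration (justified by absolute convergence for Schwartz data) to recognize the $y$-integral as producing $\mathcal{F}_k(g)$ evaluated at the appropriate point, since $\int_{\mathbb{R}^d}e_k(-y,z)g(y)\,d\mu_k(y)=\mathcal{F}_k(g)(z)$ up to the conjugation conventions fixed in the excerpt. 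This turns the expression into
\[
\int_{\mathbb{R}^d} e_k(x,z)\,\mathcal{F}_k(f)(z)\,\mathcal{F}_k(g)(z)\,d\mu_k(z),
\]
which is manifestly symmetric in $f$ and $g$. Running the same computation starting from the right-hand side $\int f(y)\tau^{-y}g(x)\,d\mu_k(y)$ yields the identical symmetric integral, so the two sides agree. The symmetry of that kernel integral is exactly the commutativity $(f\ast_k g)=(g\ast_k f)$ used just after the lemma.

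For the scaling relations \eqref{eq2.6}, both are direct changes of variable exploiting the homogeneity of the Dunkl setup. The transform identity $\mathcal{F}_k(f_t)(z)=t^{-d_k}\mathcal{F}_k(f)(z/t)$ follows by substituting $u=tx$ in the defining integral $\int f(tx)\overline{e_k(x,z)}\,d\mu_k(x)$, using the homogeneity $d\mu_k(x/t)=t^{-d_k}\,d\mu_k(x)$ from \eqref{eq2.1} (the weight $v_k$ is homogeneous of degree $2\lambda_k+2-d=d_k-d$, so the full measure scales with exponent $d_k$) together with the homogeneity $e_k(x/t,z)=e_k(x,z/t)$ of the Dunkl kernel. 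The translation-scaling identity $\tau^y(f_t)(x)=\tau^{ty}f(tx)$ is then obtained by applying $\mathcal{F}_k$ to both sides, using the first relation of \eqref{eq2.6} and the multiplier description of $\tau^y$, and comparing; equivalently one substitutes the scaling rules directly into formula \eqref{eq1.3}.

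The main obstacle is the rigorous justification of the Fubini interchange and of the extension from Schwartz $g$ to $g\in L^1(\mathbb{R}^d,d\mu_k)$: for general $L^1$ data the pointwise integral representation \eqref{eq1.3} of $\tau^{-y}f(x)$ need not be unconditionally available, so I would argue by taking $g_n\in\mathcal{S}(\mathbb{R}^d)$ with $g_n\to g$ in $L^1(d\mu_k)$ and passing to the limit, using that both sides of \eqref{eq2.5} depend continuously on $g$ in the $L^1$-norm (the left side because $\tau^{-y}f$ is bounded for Schwartz $f$, the right side because $\tau^{-y}g(x)$ is controlled in $L^1$ through the convolution bound $\|f\ast_k g\|\lesssim\|g\|_{1,d\mu_k}$). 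This density argument, rather than any single computation, is where the care is needed.
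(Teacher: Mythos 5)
Your transform-side computation is correct as far as it goes, and note that the paper itself offers no proof at this point: it cites \cite{GorIvaTik17} for \eqref{eq2.5} and calls \eqref{eq2.6} a simple calculation, so you are supplying an argument the paper omits. For $f,g\in\mathcal{S}(\mathbb{R}^d)$ your argument is complete: by \eqref{eq1.3} and Fubini (legitimate because $|e_k|\le 1$, $\mathcal{F}_k(f),g\in L^1(d\mu_k)$ for one side and $f,\mathcal{F}_k(g)\in L^1(d\mu_k)$ for the other), both sides of \eqref{eq2.5} equal
\[
\int_{\mathbb{R}^d}e_k(x,z)\,\mathcal{F}_k(f)(z)\,\mathcal{F}_k(g)(z)\,d\mu_k(z),
\]
using $\overline{e_k(y,z)}=e_k(-y,z)$. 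The scaling identities \eqref{eq2.6} are also correct: $v_k$ is homogeneous of degree $d_k-d$, so $d\mu_k$ scales with exponent $d_k$, the kernel satisfies $E_k(\lambda x,y)=E_k(x,\lambda y)$, and the translation identity follows by the substitution $z\to tz$ in \eqref{eq1.3}. Moreover, for the only use the paper makes of the lemma --- the derivation of \eqref{eq2.7}, where $g$ is the Gaussian $e^{-s|\cdot|^{2}}$ --- your Schwartz case already suffices.

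The genuine gap is the step you yourself flag: the extension to $g\in L^1(\mathbb{R}^d,d\mu_k)$, which is what the lemma actually asserts. Your continuity argument for the right-hand side is circular. The a priori bound available is $\sup_x\bigl|\int_{\mathbb{R}^d}\tau^{-y}f(x)\,g(y)\,d\mu_k(y)\bigr|\le\|\mathcal{F}_k(f)\|_{1,d\mu_k}\|g\|_{1,d\mu_k}$, i.e.\ a bound on the \emph{left}-hand side, where the Schwartz function occupies the translated slot. To pass to the limit on the right-hand side along $g_n\to g$ in $L^1(d\mu_k)$ you would need an estimate of the form $\int_{\mathbb{R}^d}|f(y)|\,|\tau^{-y}(g_n-g)(x)|\,d\mu_k(y)\lesssim\|g_n-g\|_{1,d\mu_k}$, an $L^1$-type control on translations of an $L^1$ function. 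No such bound is available: $\tau^y$ is not a positive operator, its $L^p$-boundedness is an open problem (the paper says so explicitly), and a bound $\|g\ast{}_k f\|_\infty\lesssim\|g\|_{1,d\mu_k}$ with $g$ in the translated slot is precisely what the commutativity \eqref{eq2.5} would deliver, so it cannot be invoked to prove \eqref{eq2.5}. More basically, for $g\in L^1$ only, the symbol $\tau^{-y}g(x)$ has no prior meaning: $\tau^y$ is defined on $L^2(d\mu_k)$ via the multiplier relation and on $C^{\infty}(\mathbb{R}^d)$ via Trim\`eche's extension, and an $L^1$ function need belong to neither class. So before any limiting argument one must first give the right-hand side a sense --- distributionally, or by restricting to a class of $g$ (e.g.\ radial $g$, where positivity of translation on radial functions supplies the missing control) --- and this definitional and quantitative work, carried out in \cite{GorIvaTik17}, is exactly what your density scheme does not replace.
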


Relation \eqref{eq2.5} has been recently proved in \cite{GorIvaTik17}.
Equalities \eqref{eq2.6} can be verified by simple calculations.

\begin{remark}\label{rem2.1}
It is worth mentioning that if  the convolution is defined by
\[
(f\ast {}_{k}g)(x)=\int_{\mathbb{R}^{d}}\tau^{x}f(y)\,g(y)\,d\mu_{k}(y)
\]
(see \cite{Ros03}), it is not commutative:
\[
\int_{\mathbb{R}^{d}}\tau^{x}f(y)\,g(y)\,d\mu_{k}(y)=
\int_{\mathbb{R}^{d}}f(y)\tau^{-x}g(y)\,d\mu_{k}(y).
\]
\end{remark}

Completing   the proof of \eqref{eq2.7}, we use
 \eqref{eq1.5}, \eqref{eq2.5} and the fact that  (see \cite{ThaXu07})
\begin{equation}\label{eq2zv}
\frac{1}{|y|^{d_k-\alpha}}=\frac{1}{\Gamma((d_k-\alpha)/2)}
\int_{0}^{\infty}s^{(d_k-\alpha)/2-1}e^{-s|y|^{2}}\,ds,
\end{equation}
to obtain
\begin{align*}
I_{\alpha}^kf(x)&=(\gamma_k^{\alpha})^{-1}\int_{\mathbb{R}^d}\tau^{-y}f(x)|y|^{\alpha-d_k}\,d\mu_k(y)\\
&=\frac{2^{d_k/2-\alpha}}{\Gamma(\alpha/2)}\int_{\mathbb{R}^d}\tau^{-y}f(x)\int_{0}^{\infty}s^{(d_k-\alpha)/2-1}e^{-s|y|^{2}}\,ds\,d\mu_k(y)\\
&=\frac{2^{d_k/2-\alpha}}{\Gamma(\alpha/2)}\int_{0}^{\infty}s^{(d_k-\alpha)/2-1}\int_{\mathbb{R}^d}\tau^{-y}f(x)e^{-s|y|^{2}}\,d\mu_k(y)\,ds\\
&=\frac{2^{d_k/2-\alpha}}{\Gamma(\alpha/2)}\int_{\mathbb{R}^d}f(y)\int_{0}^{\infty}s^{(d_k-\alpha)/2-1}\tau^{-y}(e^{-s|\,{\cdot}\,|^{2}})(x)\,ds\,d\mu_k(y).
\end{align*}
{The interchange of the order of integration
 is legitimate}, since, for any $x\in\mathbb{R}^d$, the  iterated integral
\[
\int_{\mathbb{R}^d}\left|\tau^{-y}f(x)\right|\int_{0}^{\infty}s^{(d_k-\alpha)/2-1}e^{-s|y|^{2}}\,ds\,d\mu_k(y)
\]
converges, where we have used the fact that
 $\tau^{y}f(x)\in \mathcal{S}(\mathbb{R}^d)\times\mathcal{S}(\mathbb{R}^d)$
 whenever
 $f\in \mathcal{S}(\mathbb{R}^d)$.

\subsection{Properties of the kernel $\Phi(x,y)$.}
%Let us study Properties of the kernel $\Phi(x,y)$.
We will need the following notation.
Let $\lambda\ge -1/2$, $J_{\lambda}(t)$ be the classical Bessel function of degree $\lambda$ and
\[
j_{\lambda}(t)=2^{\lambda}\Gamma(\lambda+1)t^{-\lambda}J_{\lambda}(t)
\]
be the normalized Bessel function. The Hankel transform is defined as follows
\[
\mathcal{H}_{\lambda}(f_0)(r)=\int_{0}^{\infty}f_0(t)j_{\lambda}(rt)\,d\nu_{\lambda}(t),\quad r\in \mathbb{R}_{+}.
\]
It is a unitary operator in $L^{2}(\mathbb{R}_{+},d\nu_{\lambda})$ and
$\mathcal{H}_{\lambda}^{-1}=\mathcal{H}_{\lambda}$ \cite[Chap.~7]{BatErd53}.
If $\lambda=\lambda_k$, the Hankel transform is a restriction of the
Dunkl transform on radial functions.
Recall that we assume that $\lambda_k>-1/2$.

For $\lambda>-1/2$, let us consider the Gegenbauer-type translation operator (see, e.g., \cite{Pla07})
\begin{equation}\label{eq2.9}
G^sf_0(r)=c_\lambda\int_{0}^{\pi}f_0(\sqrt{r^2+s^2-2rs\cos\varphi})\sin^{2\lambda}\varphi\,d\varphi, \end{equation}
where $c_\lambda=\frac{\Gamma(\lambda+1)}{\Gamma(1/2)\Gamma(\lambda+1/2)}$. If $f_0\in \mathcal{S}(\mathbb{R}_+)$, then
\begin{equation}\label{eq2.10}
G^sf_0(r)=\int_{0}^{\infty}j_{\lambda}(rt)j_{\lambda}(st)\mathcal{H}_{\lambda}(f_0)(t)\,d\nu_{\lambda}(t).
\end{equation}
We will also need the following partial case of the Funk--Hecke formula \cite{Xu00}
\begin{equation}\label{eq2.11}
\int_{\mathbb{S}^{d-1}}e_{k}(x, ty')\,d\sigma_k(y')=j_{\lambda_k}(t|x|).
\end{equation}

Let $x=rx'$, $y=ty'$, $r,t>\mathbb{R}_+$, and $x',y'\in\mathbb{S}^{d-1}$.

\begin{lemma}\label{lem2.3} The kernel\,\, $\Phi(x,y)$  satisfies the following properties
\begin{enumerate}

\item \quad $\Phi(x,y)=\Phi(y,x)$;

\medskip

\item\quad $\Phi(rx',ty')=r^{\alpha-d_k}\Phi(x',(t/r)y')$;

\medskip

\item \quad$\int_{\mathbb{S}^{d-1}}\Phi(rx',ty')\,d\sigma_{k}(x')=\Phi_{0}(r,t)$,
where
\noindent\quad  $$\Phi_{0}(r,t):=(\gamma^{k}_{\alpha})^{-1}c_{\lambda_{k}}
\int_{0}^{\pi}\bigl(r^{2}+t^{2}-2rt\cos \varphi\bigr)^{(\alpha-d_k)/2}
\sin^{d_k-2}\varphi\,d\varphi;$$

\medskip

\item \quad $\Phi(x,y)=(\gamma^{k}_{\alpha})^{-1}
 \tau^{-y}(|\,{\cdot}\,|^{\alpha-d_k})(x)$
or, equivalently,
$$\Phi(x,y)=(\gamma^{k}_{\alpha})^{-1}
\int_{\mathbb{R}^{d}}(|x|^{2}+|y|^{2}-2\<y,\eta\>)^{(\alpha-d_k)/2}
\,d\mu_{x}^{k}(\eta),$$

\noindent \quad where $\mu_{x}^{k}$ is a probability measure from \eqref{eq1.2}.
\end{enumerate}
\end{lemma}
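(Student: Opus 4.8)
The whole proof rests on the Gaussian representation \eqref{eq2.8}: I would carry out every computation on the translated Gaussians $\tau^{-y}(e^{-s|\cdot|^2})(x)$, which are honest Schwartz objects, and only reconstruct the kernel $|\cdot|^{\alpha-d_k}$ at the end through the identity \eqref{eq2zv} and Tonelli's theorem. This uniformly circumvents the fact that $|\cdot|^{\alpha-d_k}$ is not Schwartz, and reduces each of the four claims to a manipulation of \eqref{eq2.8}. I would prove (4) first, since it produces the explicit object the other three describe. For the radial Gaussian with profile $f_0(\rho)=e^{-s\rho^2}$, R\"osler's radial product formula \cite{Ros03} gives $\tau^{-y}(e^{-s|\cdot|^2})(x)=\int_{\mathbb{R}^d}e^{-s(|x|^2+|y|^2-2\langle y,\eta\rangle)}\,d\mu_x^k(\eta)$, where $\mu_x^k$ is the measure from \eqref{eq1.2} and $|x|^2+|y|^2-2\langle y,\eta\rangle\ge(|x|-|y|)^2\ge0$ because $\supp\mu_x^k\subset B_{|x|}$. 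Inserting this into \eqref{eq2.8}, interchanging the $s$- and $\eta$-integrals by Tonelli, and evaluating the inner $s$-integral by \eqref{eq2zv} collapses everything to $(\gamma^k_\alpha)^{-1}\int_{\mathbb{R}^d}(|x|^2+|y|^2-2\langle y,\eta\rangle)^{(\alpha-d_k)/2}\,d\mu_x^k(\eta)$, after using $(\gamma^k_\alpha)^{-1}=2^{d_k/2-\alpha}\Gamma((d_k-\alpha)/2)/\Gamma(\alpha/2)$; this is the second formula of (4), and it coincides with $(\gamma^k_\alpha)^{-1}\tau^{-y}(|\cdot|^{\alpha-d_k})(x)$ by the same identity applied in reverse.

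For the symmetry (1) I would avoid the non-symmetric expression from (4) and return to \eqref{eq1.3}. Since $\mathcal{F}_k(e^{-s|\cdot|^2})$ is radial, hence even, the substitution $z\mapsto-z$ in $\tau^{-y}(e^{-s|\cdot|^2})(x)=\int e_k(-y,z)e_k(x,z)\mathcal{F}_k(e^{-s|\cdot|^2})(z)\,d\mu_k(z)$, together with the parity relations $e_k(-a,-b)=e_k(a,b)$ and $e_k(a,-b)=e_k(-a,b)$ (read off from \eqref{eq1.2}), turns the integrand into $e_k(y,z)e_k(-x,z)\mathcal{F}_k(e^{-s|\cdot|^2})(z)$; hence $\tau^{-y}(e^{-s|\cdot|^2})(x)=\tau^{-x}(e^{-s|\cdot|^2})(y)$ for every $s$, and \eqref{eq2.8} gives $\Phi(x,y)=\Phi(y,x)$. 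For the homogeneity (2) I would use the scaling relation \eqref{eq2.6}: from $(e^{-s|\cdot|^2})(\lambda z)=e^{-\lambda^2 s|z|^2}$ it follows that $\tau^{-\lambda y}(e^{-s|\cdot|^2})(\lambda x)=\tau^{-y}(e^{-\lambda^2 s|\cdot|^2})(x)$, and substituting this into \eqref{eq2.8} at $(\lambda x,\lambda y)$ and changing variables $s\mapsto s/\lambda^2$ produces exactly the factor $\lambda^{\alpha-d_k}$, i.e. $\Phi(\lambda x,\lambda y)=\lambda^{\alpha-d_k}\Phi(x,y)$; the choice $\lambda=r$, $x=x'$, $y=(t/r)y'$ is then the assertion (2).

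The principal obstacle is the spherical average (3), which requires assembling the Funk--Hecke formula and the Gegenbauer translation. I would first prove the auxiliary identity $\int_{\mathbb{S}^{d-1}}\tau^{-y}f(rx')\,d\sigma_k(x')=G^{t}f_0(r)$ with $t=|y|$, for $f$ radial with profile $f_0\in\mathcal{S}(\mathbb{R}_+)$. Starting from \eqref{eq1.3}, integrating in $x'$ and applying \eqref{eq2.11} gives $\int_{\mathbb{S}^{d-1}}e_k(rx',z)\,d\sigma_k(x')=j_{\lambda_k}(r|z|)$; passing to polar coordinates in $z$ and applying \eqref{eq2.11} a second time gives $\int_{\mathbb{S}^{d-1}}e_k(-y,\rho z')\,d\sigma_k(z')=j_{\lambda_k}(\rho t)$, so that the average becomes $\int_0^\infty j_{\lambda_k}(r\rho)j_{\lambda_k}(t\rho)\mathcal{H}_{\lambda_k}(f_0)(\rho)\,d\nu_{\lambda_k}(\rho)$, which is $G^{t}f_0(r)$ by \eqref{eq2.10}. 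Applying this to $f=e^{-s|\cdot|^2}$ yields, via \eqref{eq2.9}, $\int_{\mathbb{S}^{d-1}}\tau^{-y}(e^{-s|\cdot|^2})(rx')\,d\sigma_k(x')=c_{\lambda_k}\int_0^\pi e^{-s(r^2+t^2-2rt\cos\varphi)}\sin^{2\lambda_k}\varphi\,d\varphi$.

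It then remains to integrate this against $s^{(d_k-\alpha)/2-1}$ as in \eqref{eq2.8}, interchange the $s$- and $\varphi$-integrals by Tonelli, and evaluate the inner $s$-integral by \eqref{eq2zv}; using $2\lambda_k=d_k-2$ and the value of $(\gamma^k_\alpha)^{-1}$ converts the right-hand side precisely into $\Phi_0(r,t)$. The delicate points throughout are the justification of the interchanges of integration for $(x,y)\neq(0,0)$ and the two applications of the Funk--Hecke formula \eqref{eq2.11}; both become routine once the Gaussian regularization keeps every integrand absolutely convergent, which is why I would keep the Gaussians until the very last step in each part.
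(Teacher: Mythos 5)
Your proposal is correct and follows essentially the same route as the paper: both keep the Gaussian regularization from \eqref{eq2.8} throughout, prove (4) via R\"osler's radial product formula plus Tonelli and \eqref{eq2zv}, obtain (2) by the scaling substitution $s\mapsto s/r^2$, and obtain (3) by a double application of the Funk--Hecke formula \eqref{eq2.11} followed by \eqref{eq2.10}, \eqref{eq2.9}, and \eqref{eq2zv}. The only (harmless) local variation is in (1), where you use parity of $e_k$ and evenness of the Gaussian's Dunkl transform in \eqref{eq1.3}, while the paper derives $\tau^{-y}(e^{-s|\Cdot|^2})(x)=e^{-s(|x|^2+|y|^2)}E_k(\sqrt{2s}\,x,\sqrt{2s}\,y)$ and invokes the symmetry $E_k(x,y)=E_k(y,x)$; note that your parity relation $e_k(a,-b)=e_k(-a,b)$ really rests on the identity $E_k(\lambda x,y)=E_k(x,\lambda y)$ rather than on \eqref{eq1.2} alone.
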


\begin{proof} Recall that  $E_k(x,y)$ is the Dunkl kernel.
Using $E_{k}(\lambda x, y)=E_{k}(x, \lambda  y),$ $\lambda \in \mathbb{C}$, we have from
\cite[Sec. 4.9]{Ros98} that
$$%\begin{equation}\label{+}
\int_{\mathbb{R}^{d}}e_k(x,z)e_k(-y,z)e^{-|z|^2/2}\,d\mu_k(z)=e^{-\frac{|x|^2+|y|^2}{2}}E_k(x,y).
$$%\end{equation}
This,   \eqref{eq2.6},  and  the fact that  $\mathcal{F}_{k}(e^{-|\,\cdot\,|^2/2})(y)=e^{-|x|^2/2}$
imply that \[
\tau^{-y}(e^{-s|\,{\cdot}\,|^{2}})(x)=e^{-s(|x|^2+|y|^2)}
E_k(\sqrt{2s}\,x,\sqrt{2s}\,y).
\]
Since $E_k(x,y)=E_k(y,x)$,  the property (1) follows and,
moreover,
$$%\begin{equation}\label{eq2.12}
\Phi(x,y)=\frac{2^{d_k/2-\alpha}}{\Gamma(\alpha/2)}
\int_{0}^{\infty}s^{(d_k-\alpha)/2-1}e^{-s(|x|^2+|y|^2)}
E_k(\sqrt{2s}\,x,\sqrt{2s}\,y)\,ds.
$$%\end{equation}
Changing variables  $s\to u/r^2$, we obtain the property (2):
\begin{align*}
\Phi(rx',ty')&=r^{\alpha-d_k}\int_{0}^{\infty}u^{(d_k-\alpha)/2-1}e^{-u(1+(t/r)^2)}
E_k(\sqrt{2u}\,x',\sqrt{2u}\,(t/r)y')\,du
\\
&=r^{\alpha-d_k}\Phi(x',(t/r)y').
\end{align*}

Since, by \eqref{eq2.6} and \eqref{eq1.3}, we have
\[
\tau^{-ty'}(e^{-s|\Cdot|^{2}})(rx')=\int_{\mathbb{R}^{d}}e_k(\sqrt{2s}rx',z)e_k(-\sqrt{2s}ty',z)e^{-|z|^2/2}\,d\mu_k(z),
\]
then taking into account \eqref{eq2.1}, \eqref{eq2.9}, \eqref{eq2.10}, and \eqref{eq2.11},
we obtain
\begin{align*}
&\int_{\mathbb{S}^{d-1}}\tau^{-ty'}(e^{-s|\Cdot|^{2}})(rx')\,d\sigma_k(x')
\\
%&=\int_{\mathbb{S}^{d-1}}\int_{\mathbb{R}^{d}}e_k(\sqrt{2s}rx',z)e_k(-\sqrt{2s}ty',z)e^{-|z|^2/2}\,d\mu_k(z)\,d\sigma_k(x')\\
&=\int_{\mathbb{R}^{d}}e_k(-\sqrt{2s}ty',z)e^{-|z|^2/2}\int_{\mathbb{S}^{d-1}}e_k(\sqrt{2s}rx',z)\,d\sigma_k(x')\,d\mu_k(z)\\
&=\int_{0}^{\infty}j_{\lambda_k}(\sqrt{2s}ru)e^{-u^2/2}\int_{\mathbb{S}^{d-1}}e_k(-\sqrt{2s}ty',uz')\,d\sigma_k(z')\,d\nu_{\lambda_k}(u)\\
&=\int_{0}^{\infty}j_{\lambda_k}(\sqrt{2s}ru)j_{\lambda_k}(\sqrt{2s}tu)e^{-u^2/2}\,d\nu_{\lambda_k}(u)\\
&=c_{\lambda_{k}}\int_{0}^{\pi}e^{-s(r^{2}+t^{2}-2rt\cos\varphi)}\sin^{d_k-2}\varphi\,d\varphi.
\end{align*}
This and \eqref{eq2.8} imply that
\begin{align*}
&\int_{\mathbb{S}^{d-1}}\Phi(rx',ty')\,d\sigma_{k}(x')
\\
&=\frac{2^{d_k/2-\alpha}}{\Gamma(\alpha/2)}c_{\lambda_{k}}
\int_{0}^{\infty}s^{(d_k-\alpha)/2-1}\int_{0}^{\pi}e^{-s(r^{2}+t^{2}-2rt\cos\varphi)}\sin^{d_k-2}\varphi\,d\varphi\,ds.
\end{align*}
Finally, applying \eqref{eq2zv} gives
\begin{align*}
&\int_{\mathbb{S}^{d-1}}\Phi(rx',ty')\,d\sigma_{k}(x')
\\
&=(\gamma^{k}_{\alpha})^{-1}c_{\lambda_{k}}
\int_{0}^{\pi}\bigl(r^{2}+t^{2}-2rt\cos \varphi\bigr)^{(\alpha-d_k)/2}
\sin^{d_k-2}\varphi\,d\varphi=\Phi_0(r,t),
\end{align*}
i.e.,  the property (3) follows.

Let us prove the property (4). Since for radial functions $f(x)=f_{0}(|x|)\in \mathcal{S}(\mathbb{R}^{d})$ \cite{Ros03,ThaXu05}
\[
\tau^{-y}f(x)=\int_{\mathbb{R}^{d}}f_{0}(\sqrt{|x|^{2}+|y|^{2}-2\<y,\eta\>})\,
d\mu_{x}^{k}(\eta),
\]
where $\mu_{x}^{k}$ is the probability measure in \eqref{eq1.2}, we derive
\[
\tau^{-y}(e^{-s|\Cdot|^{2}})(x)=\int_{\mathbb{R}^{d}}
e^{-s(|x|^{2}+|y|^{2}-2\<y,\eta\>)}\,d\mu_{x}^{k}(\eta)
\]
and
\begin{align*}
\Phi(x,y)&=\frac{2^{d_k/2-\alpha}}{\Gamma(\alpha/2)}
\int_{0}^{\infty}s^{(d_k-\alpha)/2-1}\int_{\mathbb{R}^{d}}
e^{-s(|x|^{2}+|y|^{2}-2\<y,\eta\>)}\,d\mu_{x}^{k}(\eta)\,ds\\
&=\frac{2^{d_k/2-\alpha}}{\Gamma(\alpha/2)}
\int_{\mathbb{R}^{d}}\int_{0}^{\infty}s^{(d_k-\alpha)/2-1}
e^{-s(|x|^{2}+|y|^{2}-2\<y,\eta\>)}\,ds\,d\mu_{x}^{k}(\eta)\\
&=(\gamma^{k}_{\alpha})^{-1}
\int_{\mathbb{R}^{d}}(|x|^{2}+|y|^{2}-2\<y,\eta\>)^{(\alpha-d_k)/2}
\,d\mu_{x}^{k}(\eta),
\end{align*}
where
we have used % In Lemma~\ref{lem2.3} we apply two times
 the Tonelli--Fubini Theorem for nonnegative functions.

\end{proof}

\section{Sharp Hardy's inequalities}

Define  the Hardy and Bellman operators as follows
\[
Hf(x)=\int_{|y|\le |x|}f(y)\,d\mu_k(y)
\]
and
\[
B f(x)=\int_{|y|\ge |x|}f(y)\,d\mu_k(y).
\]
Let
 $1\le p\le\infty$.
We are interested  in the weighted Hardy inequalities of the form
\begin{equation}\label{eq3.1}
\big\||x|^{-a}Hf(x)\big\|_{p,d\mu_{k}}\leq \textbf{c}_k^H(a,b,p,d)\big\||x|^{b}f(x)\big\|_{p,d\mu_{k}}
\end{equation}
and
\begin{equation}\label{eq3.2}
\big\||x|^{-a}Bf(x)\big\|_{p,d\mu_{k}}\leq \textbf{c}_k^B(a,b,p,d)\big\||x|^{b}f(x)\big\|_{p,d\mu_{k}}
\end{equation}
with the sharp constants
$ {\textbf{c}}_k^H(a,b,p,d)$ and
$ {\textbf{c}}_k^B(a,b,p,d)$.

In the classical setting ($k\equiv 0$),
the sharp constants were calculated  by M.~Christ and L. Grafakos~\cite{ChrGra95} in the non-weighted case\, ($b=0$, $a=d$)
and later by
Z.W.~Fu, L.~Grafakos, S.Z.~Lu and F.Y.~Zhao~\cite{FuGraLuZha12}\,in the general case.
% $ {\textbf{c}}_k^H(a,b,p,d)$ and $ {\textbf{c}}_k^B(a,b,p,d)$.
 We extend these results for the Dunkl setting.
%    Theorem~\ref{thm3.1} was proved by M.~Christ and L. Grafakos~\cite{ChrGra95}\, ($b=0$, $a=d$) and Z.W.~Fu, L.~Grafakos, S.Z.~Lu and F.Y.~Zhao~\cite{FuGraLuZha12}\,(general case).
 Recall that
\[
\lambda_k=\frac{d}{2}-1+\sum_{a\in R_+}k(a),\quad d_k=2\lambda_k+2,\quad b_{\lambda_k}=\frac{1}{2^{\lambda_k}\Gamma(\lambda_k+1)}.
\]

\begin{theorem}\label{thm3.1}
Let \, $d\in \mathbb{N}$ and $1\le p\le\infty$.
Inequality \eqref{eq3.1} holds with
$\textup{{\textbf{c}}}_k^H(a,b,p,d)<\infty$ if and only if
 $\;\frac{a}{p'}>\frac{b}{p}$ and $\;a+b=d_k$. Moreover,
\[
\textup{{\textbf{c}}}_k^H(a,b,p,d)=\frac{b_{\lambda_k}}{%\Bigl|
\frac{a}{p'}-\frac{b}{p}%\Bigr|
}.
\]

\end{theorem}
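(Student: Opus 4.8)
The plan is to reduce the $d$-dimensional inequality to a one-dimensional weighted Hardy inequality and then to recognize the latter as a Mellin convolution governed by Lemma~\ref{lem2.1}. First I would pass to polar coordinates via $d\mu_k(x)=d\nu_{\lambda_k}(r)\,d\sigma_k(x')$. Since $Hf(x)$ depends only on $r=|x|$, write $Hf(x)=(Hf)_0(r)$ with $(Hf)_0(r)=\int_0^r F(s)\,d\nu_{\lambda_k}(s)$ and spherical average $F(s)=\int_{\mathbb{S}^{d-1}}f(sx')\,d\sigma_k(x')$. Because $\sigma_k$ is a probability measure, Jensen's inequality (and the trivial bound for $p=\infty$) gives $|F(s)|^p\le\int_{\mathbb{S}^{d-1}}|f(sx')|^p\,d\sigma_k(x')$, so the right-hand side of \eqref{eq3.1} dominates its radial version $\bigl(\int_0^\infty s^{bp}|F(s)|^p\,d\nu_{\lambda_k}(s)\bigr)^{1/p}$, with equality when $f$ is radial (in which case $F=f_0$ is an arbitrary radial profile). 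Hence it suffices to prove the one-dimensional inequality for $F$, and the sharp constant is the same.

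Next I would turn this one-dimensional inequality into a Mellin convolution. Using $d\nu_{\lambda_k}(r)=b_{\lambda_k}r^{d_k}\,d\nu(r)$, set $\phi(s)=s^{b+d_k/p}F(s)$ and $\psi(r)=r^{-a+d_k/p}(Hf)_0(r)$, so that the radial right-hand side equals $b_{\lambda_k}^{1/p}\|\phi\|_{L^p(\mathbb{R}_+,d\nu)}$ and the left-hand side equals $b_{\lambda_k}^{1/p}\|\psi\|_{L^p(\mathbb{R}_+,d\nu)}$. A direct substitution $s=r/t$ then shows that, precisely under the constraint $a+b=d_k$ (which cancels the surviving power of $r$ and makes the kernel a pure power of $t$), one has $\psi=A_g\phi$ with
\[
g(t)=b_{\lambda_k}\,t^{\,b-d_k/p'}\chi_{[1,\infty)}(t)\ge0.
\]
This is the heart of the computation, and I expect the exponent bookkeeping to be the main technical point: it is exactly the homogeneity relation $a+b=d_k$ that collapses the truncated integral operator into a translation-invariant Mellin convolution.

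Applying \eqref{eq2.4} from Lemma~\ref{lem2.1} then yields $\|A_g\|_{p\to p}=\|g\|_{L^1(\mathbb{R}_+,d\nu)}=b_{\lambda_k}\int_1^\infty t^{\,b-d_k/p'-1}\,dt$. This integral is finite if and only if $b<d_k/p'$, in which case it equals $b_{\lambda_k}/(d_k/p'-b)=b_{\lambda_k}/\bigl(\frac{a}{p'}-\frac{b}{p}\bigr)$ after inserting $a+b=d_k$; when $b\ge d_k/p'$ the operator norm is infinite. Since $\phi$ ranges over all of $L^p(\mathbb{R}_+,d\nu)$ as $F$ runs over radial profiles, the supremum of the ratio of the two norms equals this operator norm, so combined with the reduction of the first step this simultaneously identifies the value of the sharp constant, establishes its sharpness (Lemma~\ref{lem2.1} furnishes the extremal behaviour on radial functions), and shows that $\frac{a}{p'}>\frac{b}{p}$ is necessary for finiteness.

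It remains to justify the necessity of $a+b=d_k$, which I would obtain by scaling. Replacing $f$ by $f(\delta\,\cdot)$ and using that $d\mu_k$ is homogeneous of degree $d_k$, the left- and right-hand sides of \eqref{eq3.1} get multiplied by $\delta^{a-d_k-d_k/p}$ and $\delta^{-b-d_k/p}$, respectively; a constant independent of $\delta>0$ is possible only if these exponents coincide, forcing $a+b=d_k$. This completes the characterization and the evaluation of $\mathbf{c}_k^H(a,b,p,d)$.
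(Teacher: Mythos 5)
Your proof is correct, and its engine is the same as the paper's: the kernel $g_0(t)=t^{b-d_k/p'}\chi_{[1,\infty)}(t)$, the sharp Mellin-convolution bound \eqref{eq2.4} of Lemma~\ref{lem2.1}, and a dilation argument for the necessity of $a+b=d_k$. Where you genuinely differ is in the treatment of non-radial $f$: you radialize at the outset, using that $Hf$ depends only on the spherical mean $F(s)=\int_{\mathbb{S}^{d-1}}f(sx')\,d\sigma_k(x')$ together with Jensen's inequality for the probability measure $\sigma_k$ (with equality for radial $f$), so the whole problem collapses to one dimension before Lemma~\ref{lem2.1} is invoked. The paper instead proves the upper bound for general $f$ directly, by estimating the bilinear form $J=\iint h(rx')f(ty')g_0(r/t)\,dm_k(x)\,dm_k(y)$ via H\"{o}lder's inequality in the radial variable followed by H\"{o}lder over the product of spheres, and restricts to radial functions only afterwards to obtain sharpness. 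Your reduction is shorter, but it works precisely because the Hardy kernel $\chi_{\{|y|\le|x|\}}$ has no angular dependence, so that $H$ factors through radialization; the paper's bilinear-form scheme is the template that carries over to the proof of Theorem~\ref{thm1.3}, where the kernel $\Phi_1(t,x',y')$ genuinely depends on the angular variables and radializing $f$ would not control the operator. Both routes yield $\mathbf{c}_k^H(a,b,p,d)=b_{\lambda_k}\|g_0\|_{L^1(\mathbb{R}_+,d\nu)}=b_{\lambda_k}\bigl(\frac{a}{p'}-\frac{b}{p}\bigr)^{-1}$, the necessity of $\frac{a}{p'}>\frac{b}{p}$ from divergence of the kernel integral, and the necessity of $a+b=d_k$ from scaling.
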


\begin{proof} Assume that  $\;\frac{a}{p'}>\frac{b}{p}$ and $\;a+b=d_k$. We consider % Let us consider the operator
\[
\wt{H}f(x)=\int_{|y|\le |x|}|y|^{d_k/p'-b}f(y)\,dm_k(y).
\]
According to \eqref{eq2.1},  inequality \eqref{eq3.1} is equivalent to the following estimate

\[
b_{\lambda_k}\big\||x|^{-a+d_k/p}\wt{H}f(x)\big\|_{p,dm_{k}}\leq
{\textbf{c}}_k^H(a,b,p,d)\|f\|_{p,dm_{k}}.
\]
If $x=rx'$, $y=ty'$, then changing variables  $y\to (r/t)y'$ yields
\[
\wt{H}f(x)=r^{d_k/p'-b}\int_{\mathbb{R}^d}f((r/t)y')g_0(t)\,dm_k(ty'),
\]
where
\[
g_0(t)=t^{b-d_k/p'}\chi_{[1,\infty)}(t).
\]
Hence, by \eqref{eq2.2}, we have
\[
|x|^{-a+d_k/p}\wt{H}f(x)=\int_{\mathbb{R}^d}f((r/t)y')g_0(t)\,dm_k(ty')=\int_{\mathbb{R}^d}f(ty')g_0(r/t)\,dm_k(ty').
\]

Let us consider the integral
\begin{align*}
J&=\int_{\mathbb{R}^{d}}\int_{\mathbb{R}^{d}}h(rx')f(ty')g_{0}(r/t)\,dm_{k}(x)\,dm_{k}(y)\\
&=\int_{\mathbb{S}^{d-1}}\int_{\mathbb{S}^{d-1}}\int_{0}^{\infty}\int_{0}^{\infty}
h(rx')f(ty')g_{0}(r/t)\,d\nu(t)\,d\nu(r)\,d\sigma_{k}(x')\,d\sigma_{k}(y').
\end{align*}
Using  H\"{o}lder's inequality
 and Lemma~\ref{lem2.1}, we obtain
\begin{align*}
|J|&\le \int_{\mathbb{S}^{d-1}}\int_{\mathbb{S}^{d-1}}\Bigl(\int_{0}^{\infty}|h(rx')|^{p'}\,d\nu(r)\Bigr)^{1/p'}
\Bigl(\int_{0}^{\infty}|f(ty')|^p\,d\nu(t)\Bigr)^{1/p}\\
&\quad\int_{0}^{\infty}g_0(r/t)\,d\nu(t)\,d\sigma_{k}(x')\,d\sigma_{k}(y')\\
&\le \|g_0\|_1 \Bigl(\int_{\mathbb{S}^{d-1}}\int_{\mathbb{S}^{d-1}}\int_{0}^{\infty}|h(rx')|^{p'}\,d\nu(r)\,d\sigma_{k}(x')\,d\sigma_{k}(y')\Bigr)^{1/p'}\\
&\quad\Bigl(\int_{\mathbb{S}^{d-1}}\int_{\mathbb{S}^{d-1}}\int_{0}^{\infty}|f(ty')|^p\,d\nu(t)\,d\sigma_{k}(x')\,d\sigma_{k}(y')\Bigr)^{1/p}\\
&=\|g_0\|_1 \|h\|_{p',dm_k} \|f\|_{p,dm_k}.
\end{align*}
Hence,
$$%\begin{equation}\label{eq3.3}
\textbf{c}_k^H(a,b,p,d)\le b_{\lambda_k}\|g_0\|_1=b_{\lambda_k}\int_{1}^{\infty}t^{b-d_k/p'}\,\frac{dt}{t}=\frac{b_{\lambda_k}}{\frac{a}{p'}-\frac{b}{p}}.
$$%\end{equation}

Considering radial functions  $f(x)=f_0(|x|)=f_0(r)$, we note that
\[
\wt{H}f(x)=r^{d_k/p'-b}\int_{\mathbb{R}^d} f(r/t)g_0(t)\,\frac{dt}t
\]
and
\[
|x|^{a-d_k/p}\wt{H}f(x)=\int_{\mathbb{R}^d} f(r/t)g_0(t)\,\frac{dt}t.
\]
Thus,  Lemma~\ref{lem2.1} yields that
$$\textbf{c}_k^H(a,b,p,d)= b_{\lambda_k}\|g_0\|_1=\frac{b_{\lambda_k}}{\frac{a}{p'}-\frac{b}{p}}.
$$

Note that, in particular, this implies that the condition  $\;\frac{a}{p'}>\frac{b}{p}$ is necessary for
  $\textup{{\textbf{c}}}_k^H(a,b,p,d)<\infty$ to hold.
 %The condition $a+b=d_k$ is connected with the homogeneity of the operator $A$ and the weighted functions $|x|^{-a}$, $|x|^{b}$.
 Moreover,
if $f_t(x)=f(tx)$, then
\[
Hf_t(x)=t^{-d_k}(Hf)_t(x), \quad \big\||x|^{b}f_t(x)\big\|_{p,d\mu_{k}}=t^{-b-d_k/p}\big\||x|^{b}f(x)\big\|_{p,d\mu_{k}}
\]
and inequality \eqref{eq3.1} can be written as
\[
t^{-d_k(1+1/p)+a}\big\||x|^{-a}Hf(x)\big\|_{p,d\mu_{k}}\leq t^{-b-d_k/p}\textbf{c}_k^H(a,b,p,d)\big\||x|^{b}f(x)\big\|_{p,d\mu_{k}},
\]
which gives  the condition $a+b=d_k$.
\end{proof}

Similarly, we prove the sharp Hardy's inequality for Bellman transform.

\begin{theorem}\label{thm3.2}
Let \, $d\in \mathbb{N}$ and $1\le p\le\infty$.
Inequality \eqref{eq3.2} holds with
$\textup{{\textbf{c}}}_k^B(a,b,p,d)<\infty$ if and only if
 $\;\frac{a}{p'}<\frac{b}{p}$ and $\;a+b=d_k$. Moreover,
\[
\textup{{\textbf{c}}}_k^B(a,b,p,d)=\frac{b_{\lambda_k}}{%\Bigl|
\frac{b}{p}-\frac{a}{p'}%\Bigr|
}.
\]
\end{theorem}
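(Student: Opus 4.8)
The plan is to imitate the proof of Theorem~\ref{thm3.1} almost verbatim, replacing the Hardy operator by its complementary Bellman counterpart; the only genuinely new point is a sign flip in one exponent, which reverses the admissibility condition from $\frac{a}{p'}>\frac{b}{p}$ to $\frac{a}{p'}<\frac{b}{p}$. First I would assume $\frac{a}{p'}<\frac{b}{p}$ and $a+b=d_k$ and introduce the modified operator $\wt{B}f(x)=\int_{|y|\ge|x|}|y|^{d_k/p'-b}f(y)\,dm_k(y)$. Using \eqref{eq2.1} to pass from $d\mu_k$ to $dm_k$ and absorbing the weights into a new function, exactly as is done for $\wt{H}$, inequality \eqref{eq3.2} is equivalent to $b_{\lambda_k}\||x|^{-a+d_k/p}\wt{B}f(x)\|_{p,dm_k}\le \textbf{c}_k^B(a,b,p,d)\|f\|_{p,dm_k}$.

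Next I would realize $|x|^{-a+d_k/p}\wt Bf$ as a Mellin convolution. Writing $x=rx'$ and $y=ty'$, the constraint $|y|\ge|x|$ reads $t\ge r$; after the substitution $t\to r/t$ this domain becomes $t\le 1$, so that $|x|^{-a+d_k/p}\wt Bf(x)=\int_{\mathbb{R}^d}f((r/t)y')\wt g_0(t)\,dm_k(ty')$ with $\wt g_0(t)=t^{b-d_k/p'}\chi_{(0,1]}(t)$. This is the sole structural difference with Theorem~\ref{thm3.1}, where the identical procedure produced the factor $\chi_{[1,\infty)}$ supported near infinity instead; the prefactor power of $r$ again cancels because $a+b=d_k$ and $\frac1p+\frac1{p'}=1$. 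By \eqref{eq2.2} the convolution may then be rewritten with $\wt g_0(r/t)$.

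I would then bound the associated bilinear form $J=\int\int h(rx')f(ty')\wt g_0(r/t)\,dm_k\,dm_k$ by H\"older's inequality in the variables $r,t$ together with Lemma~\ref{lem2.1}, obtaining $\textbf{c}_k^B(a,b,p,d)\le b_{\lambda_k}\|\wt g_0\|_1$. Here $\|\wt g_0\|_1=\int_0^1 t^{b-d_k/p'-1}\,dt=\frac1{b-d_k/p'}$, which is finite precisely when $b>\frac{d_k}{p'}$; and since $a+b=d_k$ yields $b-\frac{d_k}{p'}=\frac bp-\frac a{p'}$, this is exactly the hypothesis $\frac a{p'}<\frac bp$, and the bound reads $\frac{b_{\lambda_k}}{\frac bp-\frac a{p'}}$.

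Finally, for sharpness and necessity I would restrict to radial functions $f=f_0(|x|)$, reducing the estimate to a one-dimensional Mellin convolution and invoking the equality case $\|A_g\|_{p\to p}=\|g\|_1$ of Lemma~\ref{lem2.1} (applicable since $\wt g_0\ge0$); this forces $\textbf{c}_k^B(a,b,p,d)=b_{\lambda_k}\|\wt g_0\|_1$ and shows that finiteness of the constant requires $b>\frac{d_k}{p'}$, i.e. $\frac a{p'}<\frac bp$. The homogeneity constraint $a+b=d_k$ follows exactly as in Theorem~\ref{thm3.1} from the dilation identity $Bf_t(x)=t^{-d_k}(Bf)_t(x)$ and matching the powers of $t$ on the two sides of \eqref{eq3.2}. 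I do not anticipate a serious obstacle: the entire argument runs parallel to Theorem~\ref{thm3.1}, and the only care needed is in tracking the reversed inequality that arises from integrating over $|y|\ge|x|$ rather than $|y|\le|x|$.
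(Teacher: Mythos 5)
Your proposal is correct and follows essentially the same route as the paper, which itself only sketches this proof by reference to Theorem~\ref{thm3.1}: the same modified operator $\wt{B}$, the same reduction via \eqref{eq2.1} to a Mellin convolution with kernel $g_0(t)=t^{b-d_k/p'}\chi_{[0,1]}(t)$, the upper bound from H\"older's inequality and Lemma~\ref{lem2.1}, sharpness via radial functions and \eqref{eq2.4}, and necessity of $a+b=d_k$ by dilation. Your computation $\|g_0\|_1=(b/p-a/p')^{-1}$ under $a+b=d_k$ matches the paper's constant exactly.
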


\begin{proof}
We only sketch the proof. %  The proof follows the proof of Theorem \ref{thm3.1}. Let us consider the operator
Considering
\[
\wt{B}f(x)=\int_{|y|\ge |x|}|y|^{d_k/p'-b}f(y)\,dm_k(y)
\]
and \eqref{eq2.1}, we rewrite  inequality \eqref{eq3.2} as follows % is equivalent to the inequality
\[
b_{\lambda_k}\||x|^{-a+d_k/p}\wt{B}f(x)\|_{p,dm_{k}}\leq {\textbf{c}}_k^B(a,b,p,d)\||f\|_{p,dm_{k}}.
\]
Then we have %As in the preceding theorem
\[
\wt{B}f(x)=r^{d_k/p'-b}\int_{\mathbb{R}^d}f((r/t)y')g_0(t)\,dm_k(ty'),
\]
where
\[
g_0(t)=t^{b-d_k/p'}\chi_{[0,1]}(t).
\]
Finally,
\[
{\textbf{c}}_k^B(a,b,p,d)= b_{\lambda_k}\|g_0\|_1=b_{\lambda_k}\int_{0}^{1}t^{b-d_k/p'}\,\frac{dt}{t}=\frac{b_{\lambda_k}}{\frac{b}{p}-\frac{a}{p'}}.
\qedhere
\]
\end{proof}

\medskip

\section{Proof of Theorem~\ref{thm1.3}}

{Recall that we consider the case $k\not\equiv 0$,
$\lambda_{k}>-1/2$ and $d_{k}>1$.}
 Let $1<p<\infty$, $\gamma<\frac{d_k}{p}$,
$\beta<\frac{d_k}{p'}$, $\alpha>0$, and $\alpha=\gamma+\beta$. Consider the modified operator
\[
\wt{I}_{\alpha}^kf(x)=\int_{\mathbb{R}^{d}}f(y)|y|^{d_k/p'-\beta}\Phi(x,y)\,dm_k(y).
\]
According to \eqref{eq2.1},  inequality \eqref{eq1.6} for $q=p$ is equivalent to  %the inequality
\[
b_{\lambda_k}\big\||x|^{-\gamma+d_k/p}\wt{I}_{\alpha}^kf(x)\big\|_{p,dm_{k}}\leq \mathbf{c}_k(\alpha,\beta,\gamma,p,p,d)\|f(x)\|_{p,dm_{k}}.
\]
If $x=rx'$, $y=ty'$, then using the change of variables $y\to (r/t)y'$ and applying  the properties (1), (2) in Lemma~\ref{lem2.3}, we have
\[
\wt{I}_{\alpha}^kf(x)=r^{-\beta+\alpha-d_k/p}\int_{\mathbb{R}^{d}}f((r/t)y')\Phi_1(t,x',y')\,dm_k(ty'),
\]
where
\[
\Phi_1(t,x',y')=t^{d_k/p-\alpha+\beta}\Phi(tx',y').
\]
Hence, by \eqref{eq2.2},
\begin{align*}
|x|^{-\gamma+d_k/p}\wt{I}_{\alpha}^kf(x)%&=\int_{\mathbb{R}^{d}}f((r/t)y')\Phi_1(t,x',y')\,dm_k(ty')\\
=\int_{\mathbb{R}^{d}}f(ty')\Phi_1(r/t,x',y')\,dm_k(ty').
\end{align*}

We set %Let us consider the integral
\begin{align*}
J&:=\int_{\mathbb{R}^{d}}\int_{\mathbb{R}^{d}}h(rx')f(ty')\Phi_{1}(r/t,x',y')\,dm_{k}(x)\,dm_{k}(y)\\
&=\int_{\mathbb{S}^{d-1}}\int_{\mathbb{S}^{d-1}}\int_{0}^{\infty}\int_{0}^{\infty}
h(rx')f(ty')\Phi_{1}(r/t,x',y')\,d\nu(t)\,d\nu(r)\,d\sigma_{k}(x')\,d\sigma_{k}(y').
\end{align*}
In light of Lemma~\ref{lem2.1} and H\"{o}lder's inequality, we have
\begin{align*}
|J|&\le \int_{\mathbb{S}^{d-1}}\int_{\mathbb{S}^{d-1}}
\Bigl(\int_{0}^{\infty}|h(rx')|^{p'}\,d\nu(r)\Bigr)^{1/p'}
\Bigl(\int_{0}^{\infty}|f(ty')|^{p}\,d\nu(t)\Bigr)^{1/p}\\
&\quad  \int_{0}^{\infty}\Phi_{1}(t,x',y')\,d\nu(t)\,d\sigma_{k}(x')\,d\sigma_{k}(y')\\
&=\int_{\mathbb{S}^{d-1}}\int_{\mathbb{S}^{d-1}}
\Bigl(\int_{0}^{\infty}|h(rx')|^{p'}\,d\nu(r)
\int_{0}^{\infty}\Phi_{1}(t,x',y')\,d\nu(t)\Bigr)^{1/p'}\\
&\quad \Bigl(\int_{0}^{\infty}|f(ty')|^{p}\,d\nu(t)
\int_{0}^{\infty}\Phi_{1}(t,x',y')\,d\nu(t)\Bigr)^{1/p}\,
d\sigma_{k}(x')\,d\sigma_{k}(y')\\
&\le \Bigl(\int_{\mathbb{S}^{d-1}}\int_{\mathbb{S}^{d-1}}
\int_{0}^{\infty}|h(rx')|^{p'}\,d\nu(r)
\int_{0}^{\infty}\Phi_{1}(t,x',y')\,d\nu(t)\,
d\sigma_{k}(x')\,d\sigma_{k}(y')\Bigr)^{1/p'}\\
&\quad \Bigl(\int_{\mathbb{S}^{d-1}}\int_{\mathbb{S}^{d-1}}
\int_{0}^{\infty}|f(ty')|^{p}\,d\nu(t)
\int_{0}^{\infty}\Phi_{1}(t,x',y')\,d\nu(t)\,
d\sigma_{k}(x')\,d\sigma_{k}(y')\Bigr)^{1/p}.
\end{align*}
Taking into account  the properties (1) and (3) of Lemma~\ref{lem2.3}, we have
\[
\int_{\mathbb{S}^{d-1}}\Phi_{1}(t,x',y')\,d\sigma_{k}(x')=
t^{d_k/p-\alpha+\beta}
\int_{\mathbb{S}^{d-1}}\Phi(tx',y')\,d\sigma_{k}(x')
=t^{d_k/p-\alpha+\beta}\Phi_{0}(t,1)
\]
and
\[
\int_{\mathbb{S}^{d-1}}\Phi_{1}(t,x',y')\,d\sigma_{k}(y')=
t^{d_k/p-\alpha+\beta}\Phi_{0}(1,t)=
t^{d_k/p-\alpha+\beta}\Phi_{0}(t,1).
\]
Then, changing the order of integration implies
\begin{align*}
|J|&\le \Bigl(\int_{0}^{\infty}
t^{d_k/p-\alpha+\beta}\Phi_{0}(t,1)\,d\nu(t)\Bigr)^{1/p'}\Bigl(\int_{\mathbb{S}^{d-1}}\int_{0}^{\infty}
|h(rx')|^{p'}\,d\nu(r)\,d\sigma_{k}(x')\Bigr)^{1/p'}\\
&\quad \Bigl(\int_{0}^{\infty}
t^{d_k/p-\alpha+\beta}\Phi_{0}(t,1)\,d\nu(t)\Bigr)^{1/p}\Bigl(\int_{\mathbb{S}^{d-1}}\int_{0}^{\infty}
|f(ty')|^{p}\,d\nu(t)\,d\sigma_{k}(y')\Bigr)^{1/p}\\
&=\int_{0}^{\infty}
t^{d_k/p-\alpha+\beta}\Phi_{0}(t,1)\,d\nu(t)\,
\|h\|_{p',dm_{k}}\|f\|_{p,dm_{k}}.
\end{align*}
Thus,
$$\mathbf{c}_k(\alpha,\beta,\gamma,p,p,d)\le b_{\lambda_k}\int_{0}^{\infty}
t^{d_k/p-\alpha+\beta}\Phi_{0}(t,1)\,d\nu(t).
$$
Since for  radial functions   $f(x)=f_0(|x|)=f_0(r)$ we have  that
\begin{align*}
|x|^{-\gamma+d_k/p}\wt{I}_{\alpha}^kf(x)=\int_{0}^\infty f_0(r/t)
t^{d_k/p-\alpha+\beta}\Phi_0(t,1)\,\frac{dt}t,
\end{align*}
Lemma~\ref{lem2.1} gives
$$\mathbf{c}_k(\alpha,\beta,\gamma,p,p,d)= b_{\lambda_k}\int_{0}^{\infty}
t^{d_k/p-\alpha+\beta}\Phi_{0}(t,1)\,d\nu(t).
$$

 Let us now prove that the conditions
$\gamma<\frac{d_k}{p}$,
$\beta<\frac{d_k}{p'}$, and $\gamma+\beta=\alpha>0$
 guarantee
 that  $\mathbf{c}_k(\alpha,\beta,\gamma,p,p,d)<\infty$.
We have
\begin{align*}
&\mathbf{c}_k(\alpha,\beta,\gamma,p,p,d)\\ &=
%b_{\lambda_{k}}\int_{0}^{\infty}
%t^{d_k/p-\alpha+\beta}\Phi_{0}(t,1)\,d\nu(t)\\&=
(\gamma_{\alpha}^{k})^{-1}c_{\lambda_{k}}b_{\lambda_{k}}\int_{0}^{\infty}
t^{d_k/p-\alpha+\beta}
\int_{0}^{\pi}\bigl(t^{2}+1-2t\cos \varphi\bigr)^{(\alpha-d_k)/2}
\sin^{d_k-2}\varphi\,d\varphi\,d\nu(t)\\
&=(\gamma_{\alpha}^{k})^{-1}c_{\lambda_{k}}b_{\lambda_{k}}\int_{0}^{\infty}
\frac{t^{d_k/p-\alpha+\beta}}{(1+t^{2})^{(d_k-\alpha)/2}}
\int_{0}^{\pi}\Bigl(1-\frac{2t\cos \varphi}{1+t^{2}}\Bigr)^{(\alpha-d_k)/2}
\sin^{d_k-2}\varphi\,d\varphi\,d\nu(t).
\end{align*}
The  integral with respect to $t$ has singularities at  $t=0,1,\infty$.
It converges
at the origin if and only if $\gamma=\alpha-\beta<\frac{d_k}{p}$. Moreover, the integral converges
at $\infty$ if and only if $\beta<\frac{d_k}{p'}$.
{Concerning the point $t=1$, we set $r:=2t/(1+t^{2})$ and note that, letting $r\to 1-0$,}
\begin{align*}
\psi(r)&:=\int_{0}^{\pi}(1-r\cos \varphi)^{(\alpha-d_k)/2}
\sin^{d_k-2}\varphi\,d\varphi\\
&\asymp \int_{0}^{1}(1-r+ r\varphi^2/2)^{(\alpha-d_k)/2}
\varphi^{d_k-2}\,d\varphi+1\\
&\asymp  \int_{0}^{\sqrt{1-r}}(1-r)^{(\alpha-d_k)/2}
\varphi^{d_k-2}\,d\varphi+ \int_{\sqrt{1-r}}^{1}\varphi^{\alpha-2}
\,d\varphi+1\\
&\asymp \begin{cases}
(1-r)^{\frac{\alpha-1}{2}},& 0<\alpha<1,\\
-\ln{(1-r)},& \alpha=1,\\
\hfill 1,\hfill & \alpha>1.
\end{cases}
\end{align*}
Therefore, letting $t\to 1$, we have
\[
\int_{0}^{\pi}\Bigl(1-\frac{2t\cos \varphi}{1+t^{2}}\Bigr)^{(\alpha-d_k)/2}
\sin^{d_k-2}\varphi\,d\varphi \asymp
\begin{cases}
|1-t|^{\alpha-1},& 0<\alpha<1,\\
-\ln{|1-t|},& \alpha=1,\\
\hfill 1,\hfill & \alpha>1,
\end{cases}
\]
which implies that the singularity at the point $t=1$ is integrable.

It remains to calculate the integral
$\displaystyle \int_{0}^{\infty}
t^{d_k/p-\alpha+\beta}\Phi_{0}(t,1)\,d\nu(t)$.
Let $t\neq 1$, $r=2t/(1+t^2)$. The series
\begin{align*}
(1-r\cos \varphi)^{(\alpha-d_k)/2}&=\Gamma\Bigl(\frac{\alpha-d_k}{2}+1\Bigr)
\sum_{n=0}^{\infty}\frac{(-1)^n}{\Gamma(n+1)\Gamma(\frac{\alpha-d_k}{2}+1-n)}r^n\cos^n\varphi\\
&=\frac{1}{\Gamma(\frac{d_k-\alpha}{2})}\sum_{n=0}^{\infty}\frac{(-1)^n\Gamma(\frac{d_k-\alpha}{2}+n)}{\Gamma(n+1)}r^n\cos^n\varphi
\end{align*}
converges uniformly on $[0,\pi]$ and
\begin{align*}
\psi(r)&=\int_{0}^{\pi}(1-r\cos \varphi)^{(\alpha-d_k)/2}\sin^{d_k-2}\varphi\,d\varphi\\
&=\frac{1}{\Gamma(\frac{d_k-\alpha}{2})}
\sum_{m=0}^{\infty}\frac{\Gamma(\frac{d_k-\alpha}{2}+2m)}{\Gamma(2m+1)}r^{2m}\int_{0}^{\pi}\cos^{2m}\varphi\sin^{d_k-2}\varphi\,d\varphi\\
&=\frac{1}{\Gamma(\frac{d_k-\alpha}{2})}
\sum_{m=0}^{\infty}\frac{\Gamma(m+\frac{1}{2})\Gamma(\frac{d_k-\alpha}{2}+2m)
\Gamma(\frac{d_k-1}{2})}{\Gamma(2m+1)\Gamma(\frac{d_k}{2}+m)}r^{2m}.
\end{align*}
Since a positive series can be integrated term-by-term, it follows that
\[
\mathbf{c}_k(\alpha,\beta,\gamma,p,p,d)=b_{\lambda_{k}}\int_{0}^{\infty}
t^{d_k/p-\alpha+\beta}\Phi_{0}(t,1)\,d\nu(t)
\]
\[
=\frac{(\gamma_{\alpha}^{k})^{-1}c_{\lambda_{k}}b_{\lambda_{k}}}{\Gamma(\frac{d_k-\alpha}{2})}
\sum_{m=0}^{\infty}\frac{2^{2m}\Gamma(m+\frac{1}{2})\Gamma(\frac{d_k-\alpha}{2}+2m)
\Gamma(\frac{d_k-1}{2})}{\Gamma(2m+1)\Gamma(\frac{d_k}{2}+m)}\int_{0}^{\infty}
\frac{t^{d_k/p-\alpha+\beta+2m-1}}{(1+t^{2})^{(d_k-\alpha)/2+2m}}\,dt.
\]
Taking into account that
\[
\int_{0}^{\infty}\frac{t^{d_k/p-\alpha+\beta+2m-1}}{(1+t^{2})^{(d_k-\alpha)/2+2m}}\,dt=
\frac{\Gamma(\frac{d_k}{2p}+\frac{\beta-\alpha}{2}+m)\Gamma(\frac{d_k}{2p'}-\frac{\beta}{2}+m)
}{2\Gamma(\frac{d_k-\alpha}{2}+2m)}
\]
and
\[
\gamma^k_{\alpha}=\frac{2^{\alpha-d_k/2}\Gamma(\frac{\alpha}{2})}{\Gamma(\frac{d_k-\alpha}{2})},
\quad c_{\lambda_k}=\frac{\Gamma(\frac{d_k}{2})}{\Gamma(1/2)\Gamma(\frac{d_k-1}{2})},\quad
b_{\lambda_k}=\frac{1}{2^{d_k/2-1}\Gamma(\frac{d_k}{2})},
\]
 we arrive at
\[
\mathbf{c}_k(\alpha,\beta,\gamma,p,p,d)=\frac{2^{-\alpha}}{\Gamma(\alpha/2)}\sum_{m=0}^{\infty}
\frac{\Gamma(\frac{d_k}{2p}+\frac{\beta-\alpha}{2}+m)\Gamma(\frac{d_k}{2p'}-\frac{\beta}{2}+m)
}{\Gamma(m+1)\Gamma(\frac{d_k}{2}+m)}.
\]
Letting
\[
a=\frac{d_k}{2p}+\frac{\beta-\alpha}{2},\quad
b=\frac{d_k}{2p'}-\frac{\beta}{2},\quad
c=\frac{d_k}{2},
\]
we write
\[
\mathbf{c}_k(\alpha,\beta,\gamma,p,p,d)=\frac{2^{-\alpha}}{\Gamma(\alpha/2)}\sum_{m=0}^{\infty}
\frac{\Gamma(a+m)\Gamma(b+m)}{\Gamma(1+m)\Gamma(c+m)}.
\]

Using now the hypergeometric  function \cite[Ch.~II]{BeEr53}
\[
F(a,b;c;z)=\sum_{m=0}^{\infty}\frac{(a)_{m}(b)_{m}}{m!(c)_{m}}\,z^{k},\quad
(a)_{m}=\frac{\Gamma(a+m)}{\Gamma(a)},
\]
we obtain that
\[
\mathbf{c}_k(\alpha,\beta,\gamma,p,p,d)=\frac{2^{-\alpha}}{\Gamma(\alpha/2)}\,\frac{\Gamma(a)\Gamma(b)}{\Gamma(c)}\,
F(a,b;c;1).
\]
Finally, since \cite[Sect.~2.8, (46)]{BeEr53}
\[
F(a,b;c;1)=\frac{\Gamma(c)\Gamma(c-a-b)}{\Gamma(c-a)\Gamma(c-b)},\quad
c\ne 0,-1,-2,\dots,\quad c>a+b,
\]
we have
\[
\mathbf{c}_k(\alpha,\beta,\gamma,p,p,d)=\frac{2^{-\alpha}\Gamma(a)\Gamma(b)\Gamma(c-a-b)}{\Gamma(\alpha/2)\Gamma(c-a)\Gamma(c-b)},
\]
where
\begin{align*}
c-a-b&=\frac{d_k}{2}-\Bigl(\frac{d_k}{2p}+\frac{\beta-\alpha}{2}+
\frac{d_k}{2p'}-\frac{\beta}{2}\Bigr)=\frac{\alpha}{2},
\\
c-a&=\frac{d_k}{2}-\Bigl(\frac{d_k}{2p}+\frac{\beta-\alpha}{2}\Bigr)=
\frac{d_k}{2p'}+\frac{\alpha-\beta}{2},
\\
c-b&=\frac{d_k}{2}-\Bigl(\frac{d_k}{2p'}-\frac{\beta}{2}\Bigr)=
\frac{d_k}{2p}+\frac{\beta}{2},
\end{align*}
or, equivalently,
\[
\mathbf{c}_k(\alpha,\beta,\gamma,p,p,d)=
2^{-\alpha}\,\frac{\Gamma(\frac{1}{2}(\frac{d_k}{p}-\gamma))
\Gamma(\frac{1}{2}(\frac{d_k}{p'}-\beta))}
{\Gamma(\frac{1}{2}(\frac{d_k}{p'}+\gamma))
\Gamma(\frac{1}{2}(\frac{d_k}{p}+\beta))}.
\reqnomode \tag*{$\Box$}
\]
%\end{proof}

\begin{remark}
It is clear that the condition $\alpha=\gamma+\beta$ is necessary for
$\mathbf{c}_k(\alpha,\beta,\gamma,p,p,d)<\infty$
to hold.
 Indeed, setting $f_t(x)=f(tx)$, we have
\[
\mathcal{F}_k(f_t)(z)=t^{-d_k}\mathcal{F}_k(f)\Bigl(\frac{z}{t}\Bigr),\quad \tau^yf_t(x)=\tau^{ty}f(tx),\quad I_{\alpha}^kf_t(x)=t^{-\alpha}(I_{\alpha}^kf)_t(x),
\]
\[
\||x|^{\beta}f_t(x)\|_{p,d\mu_{k}}=t^{-\beta-d_k/p}\||x|^{\beta}f(x)\|_{p,d\mu_{k}}.
\]
Writing  inequality \eqref{eq1.6} with $q=p$ as follows
\[
t^{\gamma-\alpha-d_k/p}\||x|^{-\gamma}I_{\alpha}^kf(x)\|_{p,d\mu_{k}}\leq t^{-\beta-d_k/p}\mathbf{c}_k(\alpha,\beta,\gamma,p,p,d)\||x|^{\beta}f(x)\|_{p,d\mu_{k}}
\]
implies  $\alpha=\gamma+\beta.$
\end{remark}

\section{Proof of Theorem~\ref{thm1.4}}

\textbf{Part (a).}
%First we prove the strong $(L^q,L^p)$-boundedness of  D-Riesz potential $I_{\alpha}^k$ \eqref{eq2.7} for
Let $1<p<q<\infty$,
$\gamma<\frac{d_k}{q}$, $\beta<\frac{d_k}{p'}$, $\gamma+\beta\geq 0$, $0<\alpha<d_k$, and $\alpha-\gamma-\beta=d_k(\frac{1}{p}-\frac{1}{q})$. Note that the case $q=p$ was studied  in Theorem~\ref{thm1.3}.
We will use the representation of the kernel $\Phi(x,y)$ given in Lemma~\ref{lem2.3} and then
essentially follow the ideas of
 \cite{SteWei58}.

We write
\[
\wt{I}_{\alpha}^kf(x)=\int_{\mathbb{R}^{d}}f(y)|y|^{-\beta}\Phi_{\alpha}(x,y)\,d\mu_k(y),
\]
where
\[
f\in\mathcal{S}(\mathbb{R}^{d}),\quad \Phi_{\alpha}(x,y)=\int_{\mathbb{R}^{d}}(|x|^{2}+|y|^{2}-2\<y,\eta\>)^{(\alpha-d_k)/2}
\,d\mu_{x}^{k}(\eta),
\]
and
\[
\supp \mu_x^k\subset B_{|x|}=\{\eta\colon |\eta|\le |x|\}.
\]

We define
\[
J:=\int_{\mathbb{R}^{d}}\int_{\mathbb{R}^{d}}f(y)g(x)\,
\frac{\Phi_{\alpha}(x,y)}{|x|^{\gamma}|y|^{\beta}}\,d\mu_{k}(y)\,d\mu_{k}(x).
\]
%We can assume that $f,g\ge 0$.
 It is sufficient to prove the inequality
\begin{equation}\label{eq4.1}
J\lesssim\|f\|_{p,d\mu_{k}}\|g\|_{q',d\mu_{k}}
\end{equation}
for $f,g\ge 0$.

Recall that in the case $1<p<q<\infty$, $\gamma=\beta=0$, and $\alpha=d_k(\frac{1}{p}-\frac{1}{q})$ inequality \eqref{eq4.1} holds  (see \cite{AbdLit15,GorIvaTik17}).
Let
\[
\mathbb{R}^{d}\times \mathbb{R}^{d}=E_{1}\sqcup E_{2}\sqcup E_{3},
\]
where
\begin{align*}
E_{1}&=\{(x,y)\colon 2^{-1}|y|<|x|<2|y|\},\\
E_{2}&=\{(x,y)\colon |x|\le 2^{-1}|y|\},\\
E_{3}&=\{(x,y)\colon |y|\le 2^{-1}|x|\}.
\end{align*}
Then
\[
J=\iint_{E_{1}}+\iint_{E_{2}}+\iint_{E_{3}}=J_{1}+J_{2}+J_{3}.
\]

\smallbreak

\underline{Estimate of $J_1$.}\quad If $(x,y)\in E_{1}$, using $|\eta|\le |x|$,  then by conditions $\alpha-\beta-\gamma=d_k(\frac{1}{p}-\frac{1}{q})$, $\gamma+\beta\ge 0$ we have
\begin{align*}
(|x|^{2}+|y|^{2}-2\<y,\eta\>)^{\frac{\gamma+\beta}{2}}&\le
(|x|^{2}+4|x|^{2}+2|x||y|)^{\frac{\gamma+\beta}{2}}\\&\lesssim
|x|^{\gamma+\beta}\lesssim|x|^{\gamma}|y|^{\beta}
\end{align*}
and
\begin{align*}
%,\\
\frac{(|x|^{2}+|y|^{2}-2\<y,\eta\>)^{\frac{\alpha-d_k}{2}}}
{|x|^{\gamma}|y|^{\beta}}&\lesssim
(|x|^{2}+|y|^{2}-2\<y,\eta\>)^{\frac{\alpha-\beta-\gamma-d_k}{2}}
\\
&=(|x|^{2}+|y|^{2}-2\<y,\eta\>)^{(d_k(\frac{1}{p}-\frac{1}{q})-d_k)/2}.
\end{align*}
Set $\wt{\alpha}=d_k(\frac{1}{p}-\frac{1}{q})$.
By \eqref{eq4.1} with $\gamma=\beta=0$ and
$0<\wt{\alpha}<d_k$, we have
\[
J_{1}\lesssim \int_{\mathbb{R}^{d}}\int_{\mathbb{R}^{d}}f(y)g(x)\,
\Phi_{\wt{\alpha}}(x,y)\,d\mu_{k}(y)\,d\mu_{k}(x)\lesssim
\|f\|_{p,d\mu_{k}}\|g\|_{q',d\mu_{k}}.
\]

\smallbreak
\underline{Estimate of $J_2$.}\quad If $(x,y)\in E_{2}$, then
\[
\sqrt{|x|^{2}+|y|^{2}-2\<y,\eta\>}\ge
\sqrt{|x|^{2}+|y|^{2}-2|x||y|}\ge |y|-|x|\ge 2^{-1}|y|,
\]
therefore
\begin{align*}
\Phi_{\alpha}(x,y)&=\int_{\mathbb{R}^{d}}
\frac{1}{(\sqrt{|x|^{2}+|y|^{2}-2\<y,\eta\>})^{d_k-\alpha}}\,d\mu_{x}^{k}(\eta)\\
&\lesssim |y|^{\alpha-d_k}
\int_{\mathbb{R}^{d}}d\mu_{x}^{k}(\eta)=|y|^{\alpha-d_k}.
\end{align*}
From here and since $E_{2}\subset \{(x,y)\colon |x|\le |y|\}$,
\begin{align*}
J_{2}&\lesssim \iint_{|x|\le |y|}\frac{f(y)g(x)}
{|x|^{\gamma}|y|^{\beta-\alpha+d_k}}\,d\mu_{k}(x)\,d\mu_{k}(y)\\
&=\int_{\mathbb{R}^{d}}f(y)|y|^{\alpha-\beta-d_k}
\int_{|x|\le |y|}g(x)|x|^{-\gamma}\,d\mu_{k}(x)\,d\mu_{k}(y)\\
%&=\int_{\mathbb{R}^{d}}f(y)|y|^{\alpha-\beta-\gamma}
%\Bigl\{|y|^{\gamma-d_k}\int_{|x|\le |y|}
%g(x)|x|^{-\gamma}\,d\mu_{k}(x)\Bigr\}d\mu_{k}(y)\\
&=\int_{\mathbb{R}^{d}}f(y)|y|^{\alpha-\beta-\gamma}Vg(y)\,d\mu_{k}(y),
\end{align*}
where
\[
Vg(y)=|y|^{\gamma-d_k}\int_{|x|\le |y|}
g(x)|x|^{-\gamma}\,d\mu_{k}(x).
\]
Note that
\begin{align*}
Vg(y)&\le |y|^{\gamma-d_k}
\Bigl(\int_{|x|\le |y|}|x|^{-q\gamma}\,d\mu_{k}(x)\Bigr)^{1/q}
\|g\|_{q',d\mu_{k}}\\
&\lesssim |y|^{\gamma-d_k}|y|^{d_k/q-\gamma}
\|g\|_{q',d\mu_{k}}=|y|^{-d_k/q'}\|g\|_{q',d\mu_{k}}.
\end{align*}
Hence
\[
|Vg(y)|^{p'-q'}|y|^{(\alpha-\beta-\gamma)p'}\lesssim
|x|^{-d_k(p'-q')/q'+(\alpha-\beta-\gamma)p'}
\|g\|_{q',d\mu_{k}}^{p'-q'}.
\]
Since
\begin{align*}
-\frac{d_k(p'-q')}{q'}+(\alpha-\beta-\gamma)p'
&=p'\Bigl\{\alpha-\beta-\gamma-d_k\Bigl(\frac{1}{q'}-\frac{1}{p'}\Bigr)\Bigr\}\\
&=p'\Bigl\{\alpha-\beta-\gamma+d_k\Bigl(\frac{1}{q}-\frac{1}{p}\Bigr)\Bigr\}=0,
\end{align*}
it follows that
\begin{equation}\label{eq4.2-}
|Vg(y)|^{p'-q'}|y|^{(\alpha-\beta-\gamma)p'}\lesssim
\|g\|_{q',d\mu_{k}}^{p'-q'}.
\end{equation}
On the other hand, by Theorem~\ref{thm3.1} with $a=d_k-\gamma$, $b=\gamma$, $p=q'$, and $\frac{a}{q}>\frac{b}{q'}$ (or, equivalently,
%\Leftrightarrow
 $\gamma<\frac{d_k}{q}$), we see that
\begin{equation}\label{eq4.2}
\|Vg\|_{q',d\mu_{k}}\lesssim \|g\|_{q',d\mu_{k}}.
\end{equation}
Using \eqref{eq4.2-} and \eqref{eq4.2},
we have
\begin{align*}
\int_{\mathbb{R}^{d}}|Vg(y)|^{p'}|y|^{(\alpha-\beta-\gamma)p'}\,d\mu_{k}(y)&=
\int_{\mathbb{R}^{d}}|Vg(y)|^{q'}|Vg(y)|^{p'-q'}|y|^{(\alpha-\beta-\gamma)p'}\,d\mu_{k}(y)\\
&\lesssim \int_{\mathbb{R}^{d}}|Vg(y)|^{q'}\,d\mu_{k}(y)\,\|g\|_{q',d\mu_{k}}^{p'-q'}\lesssim
\|g\|_{q',d\mu_{k}}^{p'}.
\end{align*}
This gives
$$
J_{2}
\lesssim \|f\|_{p,d\mu_{k}}
\bigl\||y|^{\alpha-\beta-\gamma}Vg(y)\bigr\|_{p',d\mu_{k}}\lesssim \|f\|_{p,d\mu_{k}}\|g\|_{q',d\mu_{k}}.
$$
%Thus

Note  that, for $p=1$, a similar result is valid as well, i.e.,
\begin{equation}\label{eq4.3}
J_{2}\lesssim \|f\|_{1,d\mu_{k}}\|g\|_{q',d\mu_{k}},\qquad \gamma<\frac{d_k}{q}, \,\beta\le 0,
\end{equation}
%$J_{2}\lesssim \|f\|_{p,d\mu_{k}}\|g\|_{q',d\mu_{k}}$
 %also holds for $p=1$, $\gamma<\frac{d_k}{q}$, and $\beta\le 0$,
 since $\alpha-\beta-\gamma=d_k/q'$ and
\[
|y|^{\alpha-\beta-\gamma}Vg(y)\lesssim |y|^{\alpha-\beta-\gamma}|y|^{-d_k/q'}\|g\|_{q',d\mu_{k}}=\|g\|_{q',d\mu_{k}}.
\]

\smallbreak
\underline{Estimate of $J_3$.}\quad If $(x,y)\in E_{3}$, we similarly have
\[
\sqrt{|x|^{2}+|y|^{2}-2\<y,\eta\>}\ge 2^{-1}|x|,
\]
\[
\Phi_{\alpha}(x,y)=\int_{\mathbb{R}^{d}}
\frac{1}{(\sqrt{|x|^{2}+|y|^{2}-2\<y,\eta\>})^{d_k-\alpha}}\,d\mu_{x}^{k}(\eta)\lesssim |x|^{\alpha-d_k}
\]
and
\[
J_{3}\lesssim \iint_{|y|\le |x|}\frac{f(y)g(x)}
{|x|^{\gamma-\alpha+d_k}|y|^{\beta}}\,d\mu_{k}(x)\,d\mu_{k}(y)=\int_{\mathbb{R}^{d}}g(x)|x|^{\alpha-\beta-\gamma}Vf(x)\,d\mu_{k}(x),
\]
where
\[
Vf(x)=|x|^{\beta-d_k}\int_{|y|\le |x|}
f(y)|y|^{-\beta}\,d\mu_{k}(y).
\]
Since
\begin{align*}
Vf(x)&\le |x|^{\beta-d_k}
\Bigl(\int_{|y|\le |x|}|y|^{-p'\beta}\,d\mu_{k}(y)\Bigr)^{1/p'}
\|f\|_{p,d\mu_{k}}\\
&\lesssim |x|^{\beta-d_k}|x|^{d_k/p'-\beta}
\|f\|_{p,d\mu_{k}}=|x|^{-d_k/p}\|f\|_{p,d\mu_{k}},
\end{align*}
we obtain
\[
|Vf(x)|^{q-p}|x|^{(\alpha-\beta-\gamma)q}\lesssim
|x|^{-d_k(q-p)/p+(\alpha-\beta-\gamma)q}
\|f\|_{p,d\mu_{k}}^{q-p}=\|f\|_{p,d\mu_{k}}^{q-p}.
\]
Taking into account Theorem~\ref{thm3.1} with $a=d_k-\beta$, $b=\beta$, $p=p$,  $\frac{a}{p'}>\frac{b}{p}$ (or,  $\beta<\frac{d_k}{p'}$), we obtain
$$%\begin{equation}\label{eq4.4}
\|Vf\|_{p,d\mu_{k}}\lesssim \|f\|_{p,d\mu_{k}},
$$%\end{equation}
which implies %by \eqref{eq4.4}
\begin{align*}
\int_{\mathbb{R}^{d}}|Vf(x)|^{q}|x|^{(\alpha-\beta-\gamma)q}\,d\mu_{k}(x)&=
\int_{\mathbb{R}^{d}}|Vf(x)|^{p}|Vf(x)|^{q-p}|x|^{(\alpha-\beta-\gamma)q}\,d\mu_{k}(x)\\
&\lesssim \int_{\mathbb{R}^{d}}|Vf(x)|^{p}\,d\mu_{k}(x)\,\|f\|_{p,d\mu_{k}}^{q-p}\lesssim
\|f\|_{p,d\mu_{k}}^{q}.
\end{align*}

We finally  have
\[
J_{3}\lesssim \|g\|_{q',d\mu_{k}}
\bigl\||x|^{\alpha-\beta-\gamma}Vf(x)\bigr\|_{q,d\mu_{k}}\lesssim
\|f\|_{p,d\mu_{k}}\|g\|_{q',d\mu_{k}}.
\]
%Thus
%\begin{equation}\label{eq4.5}
%J_{3}\lesssim \|f\|_{p,d\mu_{k}}\|g\|_{q',d\mu_{k}}.
%\end{equation}
Again, the estimate $J_{3}\lesssim \|f\|_{p,d\mu_{k}}\|g\|_{q',d\mu_{k}}$ also holds for $p=1$, $\gamma<\frac{d_k}{q}$, and $\beta<%\frac{d_k}{p'}=
0$.

This completes the proof of  part (a).

\smallbreak
\textbf{Part (b).} Let us prove the weak $(L^q,L^1)$-boundedness of D-Riesz potential for $1<q<\infty$, $\gamma<\frac{d_k}{q}$, $\beta<0$, $\gamma+\beta\ge 0$, $0<\alpha<d_k$.
We will use the notations and assumptions of  part (a).

It is sufficient to prove the inequality
\begin{equation}\label{eq4.6}
S=\int_{\{x\in \mathbb{R}^d\colon |x|^{-\gamma}\wt{I}_{\alpha}^kf(x)>\lambda\}}\,d\mu_k(x)\lesssim \Bigl(\frac{\|f\|_{1,d\mu_k}}{\lambda}\Bigr)^q.
\end{equation}

Let us consider the operators
\[
A_{\alpha}^if(x)=\int_{\mathbb{R}^{d}}f(y)|y|^{-\beta}\Phi_{\alpha}(x,y)\chi_{E_i}(x,y)\,d\mu_k(y),\quad i=1,2,3.
\]
We have
\[
\wt{I}_{\alpha}^k=\sum_{i=1}^3 A_{\alpha}^i,
\]
and
\[
S=\sum_{i=1}^3\int_{\{x\in \mathbb{R}^d\colon |x|^{-\gamma}A_{\alpha}^if(x)>\lambda/3\}}\,d\mu_k(x)=S_1+S_2+S_3.
\]

\underline{Estimate of $S_1$.} Applying the estimate of $J_1$ and  inequality \eqref{eq4.6} with $1<q<\infty$, $\gamma=\beta=0$, and $\alpha=\wt{\alpha}=\frac{d_k}{q'}$ (see \cite{AbdLit15,GorIvaTik17}), we derive
\[
A_{\alpha}^1f(x)\lesssim\int_{\mathbb{R}^{d}}f(y)|y|^{-\beta}\Phi_{\wt{\alpha}}(x,y)\,d\mu_k(y)=\wt{I}_{\wt{\alpha}}^kf(x),
\]
and
\begin{align*}
S_1&=\int_{\{x\in \mathbb{R}^d\colon |x|^{-\gamma}A_{\alpha}^1f(x)>\lambda/3\}}\,d\mu_k(x)\\&\lesssim \int_{\{x\in \mathbb{R}^d\colon |x|^{-\gamma}\wt{I}_{\wt{\alpha}}^kf(x)\gtrsim\lambda\}}\,d\mu_k(x)\lesssim \Bigl(\frac{\|f\|_{1,d\mu_k}}{\lambda}\Bigr)^q.
\end{align*}

\underline{Estimate of $S_2$.} Applying the obtained  estimate of $J_2$, we get
\[
A_{\alpha}^2f(x)\lesssim\int_{|y|\ge |x|}|y|^{\alpha-\beta-d_k}f(y)\,d\mu_k(y)=B_1f(x).
\]
Since
\[
\int_{\mathbb{R}^d}g(x)|x|^{-\gamma}\int_{|y|\ge |x|}|y|^{\alpha-\beta-d_k}f(y)\,d\mu_k(y)\,d\mu_k(x)
\]
\[
=\int_{\mathbb{R}^{d}}f(y)|y|^{\alpha-\beta-d_k}
\int_{|x|\le |y|}g(x)|x|^{-\gamma}\,d\mu_{k}(x)\,d\mu_{k}(y),
\]
in  light of \eqref{eq4.3} with $p=1$, $\gamma<\frac{d_k}{q}$, and $\beta\le 0$, we have
\[
\big\||x|^{-\gamma}B_1f(x)\big\|_{q,d\mu_k}\lesssim \|f\|_{1,d\mu_k}.
\]
Hence,
\begin{align*}
S_2&=\int_{\{x\in \mathbb{R}^d\colon |x|^{-\gamma}A_{\alpha}^2f(x)>\lambda/3\}}\,d\mu_k(x)\\&\lesssim \int_{\{x\in \mathbb{R}^d\colon |x|^{-\gamma}B_1f(x)\gtrsim\lambda\}}\,d\mu_k(x)\lesssim \Bigl(\frac{\|f\|_{1,d\mu_k}}{\lambda}\Bigr)^q.
\end{align*}

\underline{Estimate of $S_3$.} Applying the estimate of $J_3$, we obtain
\[
A_{\alpha}^3f(x)\lesssim |x|^{\alpha-d_k}\int_{|y|\le |x|}|y|^{-\beta}f(y)\,d\mu_k(y)=H_1f(x).
\]
Using the estimate $J_{3}\lesssim \|f\|_{1,d\mu_{k}}\|g\|_{q',d\mu_{k}}$ with $\gamma<\frac{d_k}{q}$ and $\beta<0$ yields
%By the estimate \eqref{eq4.5} for $p=1$, $\gamma<\frac{d_k}{q}$, $\beta<0$,
\[
\big\| |x|^{-\gamma}H_1f(x)\big\|_{q,d\mu_k}\lesssim \|f\|_{1,d\mu_k}.
\]
Thus,
\begin{eqnarray}\nonumber
S_3&=&\int_{\{x\in \mathbb{R}^d\colon |x|^{-\gamma}A_{\alpha}^3f(x)>\lambda/3\}}\,d\mu_k(x)
\\
%\begin{equation}
\label{eq4.7}
&\lesssim& \int_{\{x\in \mathbb{R}^d\colon |x|^{-\gamma}H_1f(x)\gtrsim\lambda\}}\,d\mu_k(x)\lesssim \Big(\frac{\|f\|_{1,d\mu_k}}{\lambda}\Bigr)^q.
\end{eqnarray}
If $\beta=0$, $\alpha-\gamma=d_k(1-\frac{1}{q})$, then
\[
|x|^{-\gamma}H_1f(x)=|x|^{\alpha-\gamma-d_k}\int_{|y|\le |x|}f(y)\,d\mu_k(y)=|x|^{-d_k/q}\int_{|y|\le |x|}f(y)\,d\mu_k(y).
\]
Since  inequality \eqref{eq4.7} is homogeneous, we can assume that $\|f\|_{1,d\mu_k}=1$.
Therefore,
\begin{align*}
S_3&\lesssim\int_{\{x\in \mathbb{R}^d\colon |x|^{-d_k/q}\int_{|y|\le |x|}f(y)\,d\mu_k(y)\gtrsim\lambda\}}\,d\mu_k(x)\\
&\lesssim\int_{\{x\in \mathbb{R}^d\colon |x|^{-d_k/q}\gtrsim\lambda\}}\,d\mu_k(x)\lesssim \lambda^{-q}=\Big(\frac{\|f\|_{1,d\mu_k}}{\lambda}\Bigr)^q,
\end{align*}
completing the proof.
\hfill $\Box$

\smallbreak
Let us mention that for the so-called B-Riesz potentials the results that are similar to Theorem~\ref{thm1.4} were established in \cite{GadGul11}.

\section{Properties of the spaces $\Phi_k$ and $\Psi_k$}

Recall that $T_{j}$, $j=1,\ldots,d$, are differential-differences Dunkl operators given by (\ref{eq1.2-}), $T^n=\prod_{j=1}^dT_j^{n_j}$, $n\in\mathbb{Z}^d_+$,
\[
\Phi_k=\Bigl\{f\in \mathcal{S}(\mathbb{R}^d)\colon
\int_{\mathbb{R}^{d}}x^nf(x)\,d\mu_k(x)=0,\quad n\in \mathbb{Z}^d_+\Bigr\},
\]
and
\[
\Psi_k=\big\{\mathcal{F}_{k}(f)\colon f\in\Phi_k\big\}.
\]

If $f\in \mathcal{S}(\mathbb{R}^d)$, then from the definition of the generalized exponential function $e_k(x,y)$ and %the equality
\[
f(x)=\int_{\mathbb{R}^d}e_k(x,y)\mathcal{F}_k(f)(y)\,d\mu_k(y),
\]
we obtain that
\[
T^nf(x)=i^n\int_{\mathbb{R}^d}y^ne_k(x,y)\mathcal{F}_k(f)(y)\,d\mu_k(y),\quad n\in\mathbb{Z}^d_+,
\]
and
\[
T^nf(0)=i^n\int_{\mathbb{R}^d}y^n\mathcal{F}_k(f)(y)\,d\mu_k(y).
\]
Therefore,
\[
\Psi_k=\big\{f\in\mathcal{S}(\mathbb{R}^{d})\colon T^nf(0)=0,\,\, n\in \mathbb{Z}^d_+\big\}.
\]
Note that
in the classical case ($k\equiv0$) we have
\[
\Psi=\Psi_0=\big\{\mathcal{F}(f)\colon f\in\Phi\big\}=\big\{f\in\mathcal{S}(\mathbb{R}^{d})\colon D^nf(0)=0,\,\, n\in \mathbb{Z}^d_+\big\}.
\]

\begin{theorem}\label{thm6.1}
We have $\Psi_k=\Psi$.
\end{theorem}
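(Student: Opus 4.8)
The plan is to reinterpret both membership conditions as statements about the Taylor expansion of $f$ at the origin and to match them degree by degree. For $f\in\mathcal S(\mathbb R^d)$ write its Taylor expansion at $0$ as $f\sim\sum_{m\ge 0}P_m$, where $P_m$ is the homogeneous polynomial of degree $m$ collecting the derivatives $D^nf(0)$ with $|n|=m$. By definition $f\in\Psi$ exactly when every $P_m$ vanishes, i.e.\ $f$ is flat at the origin, while $f\in\Psi_k$ means $T^nf(0)=0$ for all $n\in\mathbb Z^d_+$. Thus it suffices to prove, for each fixed $N$, the equivalence
\[
\bigl(T^nf(0)=0\ \text{for all }|n|=N\bigr)\iff P_N=0,
\]
and then intersect over all $N$.

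First I would record the elementary mapping properties of the Dunkl operators on polynomials. Since $\sigma_a$ fixes the hyperplane $\langle a,x\rangle=0$, the linear form $\langle a,x\rangle$ divides $p(x)-p(\sigma_a x)$ for every polynomial $p$; hence each $T_j$ sends a homogeneous polynomial of degree $\ell$ to one of degree $\ell-1$ and annihilates constants. For general $f\in\mathcal S$ the quotient $\bigl(f(x)-f(\sigma_a x)\bigr)/\langle a,x\rangle$ is smooth (division by a coordinate after a linear change of variables) and, if $f$ vanishes to order $m$ at $0$, this quotient vanishes to order at least $m-1$; together with the same property of $D_j$ this shows that $T_j$ lowers the order of flatness at $0$ by at most one. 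Writing $f=P_0+\dots+P_N+R$ with $R$ vanishing to order $N+1$, applying $T^n$ with $|n|=N$ then gives $T^nP_m\equiv 0$ for $m<N$ (the degree drops below zero), $T^nP_m(0)=0$ for $m>N$ (positive homogeneity), and $T^nR(0)=0$; consequently
\[
T^nf(0)=T^nP_N(0),\qquad |n|=N.
\]

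The core of the argument is then the claim that the linear map $P\mapsto\bigl(T^nP(0)\bigr)_{|n|=N}$ is injective on the space of homogeneous polynomials of degree $N$. Here I would invoke the Dunkl intertwining operator $V_k$, which is a linear bijection of each space of degree-$N$ homogeneous polynomials, fixes constants, and satisfies $T_jV_k=V_kD_j$. Writing $P=V_kQ$ we obtain $T^nP=V_kD^nQ$; for $|n|=N$ and $\deg Q=N$ the polynomial $D^nQ$ is the constant $D^nQ(0)$, namely $n!$ times the coefficient of $x^n$ in $Q$, which $V_k$ leaves unchanged, so $T^nP(0)=D^nQ(0)$. Hence these quantities vanish for all $|n|=N$ iff $Q=0$ iff $P=0$, proving injectivity; since the two finite-dimensional spaces have equal dimension, the map is in fact bijective. (Alternatively one may use the nondegeneracy of the Dunkl bilinear form $(p,q)\mapsto\bigl(p(T)q\bigr)(0)$ on homogeneous polynomials.) Combining this with the displayed identity $T^nf(0)=T^nP_N(0)$ yields the degree-$N$ equivalence above, and intersecting over $N$ gives $\Psi_k=\Psi$.

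I expect the only genuine obstacle to be the injectivity step, which rests on the structural theory of Dunkl operators (the intertwining operator, or equivalently the nondegeneracy of the Dunkl pairing); every other ingredient is routine Taylor analysis together with the degree-lowering property of the $T_j$.
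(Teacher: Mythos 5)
Your proof is correct, and its decisive step differs from the one in the paper. The paper shares your Taylor-expansion skeleton: it proves the flatness bookkeeping for $T_j$ via the integral representation $T_{j}f(x)=D_{j}f(x)+\sum_{a\in R_{+}}\frac{2k(a)\langle a,e_{j}\rangle}{|a|}\int_{0}^{1}\partial_af\bigl(x-\frac{2t\langle a,x\rangle}{|a|^2}a\bigr)\,dt$, which plays exactly the role of your Hadamard-quotient/flatness-lowering lemma, and then writes $f=p+r$ with $p$ a Taylor polynomial. But where you invoke the intertwining operator $V_k$, the paper uses precisely your parenthetical alternative, the nondegeneracy of the Dunkl pairing: from $T^np(0)=0$ for $|n|\le m$ it concludes $0=p(T)p(0)=\int_{\mathbb{R}^d}\bigl(e^{-\Delta_k/2}p(x)\bigr)^2e^{-|x|^2/2}\,d\mu_k(x)$ by R\"osler's formula, hence $e^{-\Delta_k/2}p=0$, hence $p\equiv0$ by bijectivity of $e^{-\Delta_k/2}$ on polynomials. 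The comparison: (i) your route through $V_k$ (degree-preserving bijection, $T_jV_k=V_kD_j$, $V_k1=1$) yields the sharper structural fact that $P\mapsto\bigl(T^nP(0)\bigr)_{|n|=N}$ is a linear bijection on each space of homogeneous polynomials, and it handles both inclusions at once, degree by degree, whereas the paper proves $\Psi\subset\Psi_k$ and $\Psi_k\subset\Psi$ by two separate arguments; (ii) the paper's positivity argument requires taking $f$ real (real and imaginary parts must be treated separately), while your injectivity argument is purely linear-algebraic and works over $\mathbb{C}$ directly; (iii) neither route is more elementary than the other --- both rest on nontrivial Dunkl structure theory valid for $k\ge0$ (existence and bijectivity of the intertwining operator in your case, R\"osler's Gaussian formula for $p(T)q(0)$ together with bijectivity of $e^{-\Delta_k/2}$ in the paper's), and both inputs are found in the same reference [Ros02] cited by the paper.
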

\begin{proof} Let $f\in \Psi$, $D=(D_1,\dots,D_d)$, and let $\partial_af(x)=\<Df(x),\frac{a}{|a|}\>$ be
the directional derivative with respect to a vector $a$. Taking into consideration
\[
\frac{f(x)-f(\sigma_{a}x)}{\<a,x\>}=\frac{2}{|a|}\int_{0}^{1}\partial_af\Bigl(x-\frac{2t\<a,x\>}{|a|^2}a\Bigr)\,dt
\]
and
\begin{equation}\label{eq5.1}
T_{j}f(x)=D_{j}f(x)+
\sum_{a\in R_{+}}\frac{2k(a)\<a,e_{j}\>}{|a|}\int_{0}^{1}\partial_af\Bigl(x-\frac{2t\<a,x\>}{|a|^2}a\Bigr)\,dt
\end{equation}
we obtain that $T_{j}f(0)=0$, $j=1,\dots,d$.
By \eqref{eq5.1}, we derive that  $\Psi\subset \Psi_k$.
In addition, if $D^nf(0)=0$ for $|n|=\sum_{j=1}^dn_j\le m$, then $T^nf(0)=0$ for $|n|\le m$.

Let $m\in \mathbb{Z}_+$ and $f\in \Psi_k$ be a real function. Using the Taylor formula, we write
\[
f(x)=p(x)+r(x),
\]
where $p(x)$ is a polynomial of degree $\deg p\le m$, and $D^nr(0)=0$ for $|n|\le m$.
Since $T^nf(0)=T^nr(0)=0$ for $|n|\le m$, it follows that $T^np(0)=0$ for $|n|\le m$ and, in particular, $p(0)=0$.
By \cite{Ros02},
\[
0=p(T)p(0)=\int_{\mathbb{R}^d}\bigl(e^{-\Delta_k/2}p(x)\bigr)^2e^{-|x|^2/2}\,d\mu_k(x)
\]
and $e^{-\Delta_k/2}p(x)=0$.
Since $e^{-\Delta_k/2}$ is a bijective operator on the set of all polynomials \cite{Ros02}, we obtain that $p(x)\equiv 0$, and $D^nf(0)=D^np(0)=0$ for $|n|\le m$. Thus, $\Psi_k\subset \Psi$.
\end{proof}

 Theorem~\ref{thm6.1} immediately implies the following

\begin{corollary}\label{cor6.2}
We have $I^{k}_{\alpha}(\Phi_{k})=\Phi_{k}$ and $\mathcal{F}_{k}(I_{\alpha}^{k})(\Psi_{k})=\Psi_{k}$.
\end{corollary}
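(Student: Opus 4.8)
The plan is to deduce both identities from Theorem~\ref{thm6.1} together with the characterization $\Psi_k=\Psi=\{f\in\mathcal{S}(\mathbb{R}^d)\colon D^nf(0)=0,\ n\in\mathbb{Z}^d_+\}$. The conjugated operator $\mathcal{F}_k(I_\alpha^k)=\mathcal{F}_k I_\alpha^k\mathcal{F}_k^{-1}$ acts, by the very definition $I_\alpha^kf=\mathcal{F}_k^{-1}|\,{\cdot}\,|^{-\alpha}\mathcal{F}_k(f)$, simply as multiplication by $|\,{\cdot}\,|^{-\alpha}$. Thus the second identity $\mathcal{F}_k(I_\alpha^k)(\Psi_k)=\Psi_k$ amounts to showing that multiplication by $|x|^{-\alpha}$ maps $\Psi_k$ onto itself, and by Theorem~\ref{thm6.1} it suffices to prove $|x|^{-\alpha}\Psi=\Psi$ for every $\alpha\in\mathbb{R}$.

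First I would verify the key fact: if $g\in\Psi$ and $\beta\in\mathbb{R}$, then $|x|^{\beta}g\in\Psi$. Since every $g\in\Psi$ vanishes to infinite order at the origin, $g$ and all its derivatives are $O(|x|^N)$ near $0$ for each $N$, so by the Leibniz rule each derivative of $|x|^{\beta}g$ tends to $0$ as $x\to0$; hence $|x|^{\beta}g\in C^\infty(\mathbb{R}^d)$, it retains the rapid decay of $g$ at infinity, and all its derivatives again vanish at $0$. Therefore $|x|^{\beta}g\in\Psi$. Applying this with $\beta=-\alpha$ and with $\beta=\alpha$ shows that multiplication by $|x|^{-\alpha}$ is a bijection of $\Psi$ onto itself, its inverse being multiplication by $|x|^{\alpha}$. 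Combined with $\Psi_k=\Psi$, this yields $\mathcal{F}_k(I_\alpha^k)(\Psi_k)=|x|^{-\alpha}\Psi=\Psi=\Psi_k$.

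Finally, I would transfer this to $\Phi_k$ through the Dunkl transform. Because $\mathcal{F}_k$ is a bijection of $\mathcal{S}(\mathbb{R}^d)$ and $\Psi_k=\mathcal{F}_k(\Phi_k)$, we have $\Phi_k=\mathcal{F}_k^{-1}(\Psi_k)$, and
\[
I_\alpha^k(\Phi_k)=\mathcal{F}_k^{-1}\bigl(|\,{\cdot}\,|^{-\alpha}\mathcal{F}_k(\Phi_k)\bigr)
=\mathcal{F}_k^{-1}\bigl(|\,{\cdot}\,|^{-\alpha}\Psi_k\bigr)
=\mathcal{F}_k^{-1}(\Psi_k)=\Phi_k,
\]
where the third equality is the identity just established. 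The only point requiring genuine care is the key fact above; everything else is a formal consequence of Theorem~\ref{thm6.1}. Accordingly, I expect the main (and essentially the sole) obstacle to be the verification that multiplication by a real power of $|x|$ preserves smoothness at the origin, which is exactly where the infinite flatness of functions in $\Psi$ is used.
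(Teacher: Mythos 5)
Your proposal is correct and takes essentially the same route as the paper: the paper deduces the corollary ``immediately'' from Theorem~\ref{thm6.1}, implicitly invoking the classical Lizorkin-space fact (stated in the introduction with references) that multiplication by $|x|^{\pm\alpha}$ maps $\Psi$ onto itself, and then conjugating by $\mathcal{F}_k$ exactly as you do. The only difference is that you verify this multiplication fact explicitly via infinite flatness at the origin and the Leibniz rule, whereas the paper imports it from the classical literature; your verification is sound.
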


Let $f\in\mathcal{S}(\mathbb{R}^d)$. Using the positive $L_p$-bounded generalized translation operator
\[
\mathcal{T}^{t}f(x)=\int_{\mathbb{S}^{d-1}}\tau^{ty'}f(x)\,d\sigma_k(y')
\]
\cite{GorIvaTik17}, we can write the D-Riesz potential and the convolution with a radial function $g_0(|y|)$ as follows
\begin{equation}\label{eq5.2}
I_{\alpha}^kf(x)=(\gamma^k_{\alpha})^{-1}\int_{0}^{\infty}\mathcal{T}^{t}f(x)\,t^{\alpha-d_k}\,d\nu_{\lambda_k}(t)
\end{equation}
and
\begin{equation}\label{eq5.3}
\int_{\mathbb{R}^{d}}\tau^{-y}f(x)\,g_0(|y|)\,d\mu_{k}(y)=\int_0^{\infty}\mathcal{T}^tf(x)g_0(t)\,d\nu_{\lambda_k}(t).
\end{equation}

The proof of the  following result is based on  Theorem~\ref{thm1.3}.

\begin{theorem}\label{thm5.2}
If $1<p<\infty$, $-\frac{d_k}{p}<\beta<\frac{d_k}{p'}$, then
$\Phi_k$ is dense in $L^p(\mathbb{R}^d,  |x|^{\beta p}\,d\mu_k)$.
\end{theorem}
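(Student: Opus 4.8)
The plan is to argue by duality, exactly as in the classical statement \cite[p.~41]{Sam02}. Since $1<p<\infty$, the dual of $L^p(\mathbb{R}^d,|x|^{\beta p}\,d\mu_k)$ is $L^{p'}(\mathbb{R}^d,|x|^{-\beta p'}\,d\mu_k)$ under the pairing $(f,g)\mapsto\int_{\mathbb{R}^d}fg\,d\mu_k$ (the substitution $u=|x|^{\beta}f$ reduces this to the unweighted duality). By the Hahn--Banach theorem it therefore suffices to show that the only $g\in L^{p'}(\mathbb{R}^d,|x|^{-\beta p'}\,d\mu_k)$ with $\int_{\mathbb{R}^d}\phi g\,d\mu_k=0$ for all $\phi\in\Phi_k$ is $g=0$. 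First I would record that the lower bound $\beta>-d_k/p$ guarantees $\mathcal{S}(\mathbb{R}^d)\subset L^p(\mathbb{R}^d,|x|^{\beta p}\,d\mu_k)$: by \eqref{eq2.1} one has $d\mu_k=b_{\lambda_k}r^{d_k-1}\,dr\,d\sigma_k$, so $\int_0^1 r^{\beta p+d_k-1}\,dr<\infty$ controls the integral near the origin, while Schwartz decay controls it at infinity. Consequently $\phi\mapsto\int\phi g\,d\mu_k$ is continuous on $\mathcal{S}(\mathbb{R}^d)$, the measure $g\,d\mu_k$ defines a tempered distribution, and its distributional Dunkl transform is available.

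Next I would transfer the orthogonality through the Dunkl transform. By definition $\Phi_k=\mathcal{F}_k^{-1}(\Psi_k)$ and, by Theorem~\ref{thm6.1}, $\Psi_k=\Psi=\{\psi\in\mathcal{S}(\mathbb{R}^d)\colon D^n\psi(0)=0,\ n\in\mathbb{Z}^d_+\}$. Writing each $\phi\in\Phi_k$ as $\phi=\mathcal{F}_k^{-1}\psi$ with $\psi\in\Psi$ and invoking the multiplication formula for the Dunkl transform (valid for $\mathcal{F}_k^{-1}$ as well, since $d\mu_k$ is reflection invariant), the hypothesis becomes $\int_{\mathbb{R}^d}\psi\,h\,d\mu_k=0$ for all $\psi\in\Psi$, where $h=\mathcal{F}_k^{-1}g$ is the distributional inverse Dunkl transform of $g$. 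Since $\Psi$ contains every Schwartz function supported away from the origin, the tempered distribution $h\,d\mu_k$ is supported at $\{0\}$, hence is a finite combination $\sum_{|n|\le N}c_n D^n\delta_0$.

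Finally I would unwind the transform. Because the Dunkl kernel $e_k(x,y)$ is real-analytic and its Taylor expansion is bihomogeneous, $(D_y^n e_k(x,y))|_{y=0}$ is a polynomial in $x$; pairing $g=\mathcal{F}_k h$ against a test function then gives $g(x)=\sum_{|n|\le N}\tilde c_n\,\overline{(D_y^n e_k(x,y))|_{y=0}}$ for suitable constants $\tilde c_n$, i.e.\ $g$ is an ordinary polynomial, say of degree $m\ge0$. Here the upper bound $\beta<d_k/p'$ enters decisively: for a nonzero polynomial one has $\int_{\mathbb{S}^{d-1}}|g(r\theta)|^{p'}\,d\sigma_k(\theta)\asymp r^{mp'}$ as $r\to\infty$, whence $\int_{|x|\ge1}|g|^{p'}|x|^{-\beta p'}\,d\mu_k\asymp\int_1^\infty r^{mp'-\beta p'+d_k-1}\,dr=\infty$, because $m-\beta+d_k/p'>0$. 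Thus $g\in L^{p'}(\mathbb{R}^d,|x|^{-\beta p'}\,d\mu_k)$ forces $g\equiv0$, and the density follows.

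I expect the main obstacle to be the middle, distribution-theoretic step: making the distributional Dunkl transform and the multiplication formula rigorous on $\mathcal{S}'$, and proving cleanly that the $\Psi$-annihilator consists exactly of distributions supported at the origin and that their Dunkl transform is a genuine polynomial. The two endpoint restrictions on $\beta$ then play clearly separated roles — $\beta>-d_k/p$ secures $\mathcal{S}\subset L^p(\mathbb{R}^d,|x|^{\beta p}\,d\mu_k)$ and the temperedness of $g\,d\mu_k$, while $\beta<d_k/p'$ is precisely what excludes every nonzero polynomial from the dual space. (One could alternatively exploit the invariance $I^k_{\alpha}(\Phi_k)=\Phi_k$ from Corollary~\ref{cor6.2} together with Theorem~\ref{thm1.3} to bootstrap between weights, but the duality route above seems the most direct.)
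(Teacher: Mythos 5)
Your proof is correct, but it takes a genuinely different route from the paper's. The paper argues constructively rather than by duality: for $f\in\mathcal{S}(\mathbb{R}^d)$, a cutoff $\eta$ equal to $1$ near the origin, and $\psi_{0N}=\mathcal{F}_k(\eta(N\,\cdot))$, it sets $\varphi_N=f-\int_{\mathbb{R}^d}\tau^{-y}f\,\psi_{0N}(|y|)\,d\mu_k(y)$, so that $\mathcal{F}_k(\varphi_N)=(1-\eta(N\,\cdot))\mathcal{F}_k(f)$ vanishes near the origin and hence $\varphi_N\in\Phi_k$; then the kernel bound $|\psi_{0N}(t)|\lesssim N^{-\alpha}t^{\alpha-d_k}$ and the positivity of the spherical translation $\mathcal{T}^t$ give $|f(x)-\varphi_N(x)|\lesssim N^{-\alpha}I_{\alpha}^k|f|(x)$, and Theorem~\ref{thm1.3} --- applied with weight $|x|^{\beta}$ on the left, $|x|^{\alpha+\beta}$ on the right, and $\alpha>0$ chosen so small that $\alpha+\beta<d_k/p'$ --- yields $\bigl\||x|^{\beta}(f-\varphi_N)\bigr\|_{p,d\mu_k}\lesssim N^{-\alpha}$. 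Thus in the paper both endpoint restrictions on $\beta$ enter as hypotheses of Theorem~\ref{thm1.3} (namely $-\beta<d_k/p$ and $\alpha+\beta<d_k/p'$), and the approach buys an explicit approximant with a convergence rate, built entirely from estimates already proved in the paper. Your argument instead transplants Samko's classical duality proof: it bypasses the Riesz potential and Theorem~\ref{thm1.3} altogether, and the two restrictions on $\beta$ play transparently separate roles ($\beta>-d_k/p$ makes $g\,d\mu_k$ a tempered distribution, $\beta<d_k/p'$ excludes every nonzero polynomial from the dual space); the Hahn--Banach reduction, the weighted duality, and the final divergence computation are all sound. The price is exactly the machinery you flag as the obstacle: you need $\mathcal{F}_k$ to be a topological automorphism of $\mathcal{S}(\mathbb{R}^d)$ (de Jeu's theorem, which the paper invokes only in passing) in order to define $h=\mathcal{F}_k^{-1}(g\,d\mu_k)\in\mathcal{S}'$ and invert it; the structure theorem for distributions supported at a point; and the bihomogeneous expansion of the Dunkl kernel, which guarantees that $(D_y^n e_k(x,y))\big|_{y=0}$ is a polynomial in $x$ and hence that $\mathcal{F}_k\bigl(\sum_n c_n D^n\delta_0\bigr)$ is a genuine polynomial. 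All three are standard facts in Dunkl analysis, so your proof is complete modulo citations. One small simplification: you do not need the full strength of Theorem~\ref{thm6.1}, only the inclusion $C_c^\infty(\mathbb{R}^d\setminus\{0\})\subset\Psi_k$, which is immediate from the characterization $\Psi_k=\{f\in\mathcal{S}(\mathbb{R}^d)\colon T^nf(0)=0,\ n\in\mathbb{Z}_+^d\}$, because $T^nf$ vanishes near $0$ whenever $f$ does.
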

\begin{proof}
Let $\eta\in \mathcal{S}(\mathbb{R}^d)$ be such that $\eta(x)=1$ if $|x|\leq 1$, $\eta(x)>0$ if $|x|<2$, and $\eta(x)=0$ if $|x|\geq 2$. We can assume that $f\in\mathcal{S}(\mathbb{R}^d)$.

Set
\[
\psi_0(|y|)=\mathcal{F}_k(\eta)(y),\quad \psi_{0N}(|y|)=\frac{1}{N^{d_k}}\psi_0\Bigl(\frac{|y|}{N}\Bigr)=\mathcal{F}_k(\eta(N\cdot))(y),
\]
\[
\varphi_N(x)=f(x)-\int_{\mathbb{R}^d}\tau^{-y}f(x)\psi_{0N}(|y|)\,d\mu_k(y).
\]
Since (see \cite{GorIvaTik17})
\[
\mathcal{F}_k(\varphi_N)(z)=(1-\eta(Nz))\mathcal{F}_k(f)(z)\in \Psi_k,
\]
it follows that $\varphi_N\in\Phi_k$ and, by \eqref{eq5.3},
\begin{equation}\label{eq5.4}
\bigl\||x|^{\beta}(f(x)-\varphi_N(x))\bigl\|_{p,d\mu_k}=\Bigl\||x|^{\beta}\int_0^{\infty}\mathcal{T}^tf(x)\psi_{0N}(t)\,d\nu_{\lambda_k}(t)\Bigr\|_{p,d\mu_k}.
\end{equation}

For any $\alpha\in(0,d_k)$, we have
\[
|\psi_0(t)|\lesssim t^{\alpha-d_k}\quad \text{and}\quad |\psi_{0N}(t)|\lesssim N^{-\alpha}t^{\alpha-d_k}.
\]
Hence, by positivity of the operator $T^t$ and \eqref{eq5.2},
\begin{align*}
\Bigl|\int_0^{\infty}\mathcal{T}^tf(x)\psi_{0N}(t)\,d\nu_{\lambda_k}(t)\Bigr|&\le\int_0^{\infty}\mathcal{T}^t|f|(x)\,|\psi_{0N}(t)|\,d\nu_{\lambda_k}(t)\\
&\lesssim N^{-\alpha}\int_0^{\infty}\mathcal{T}^t|f(x)|\,t^{\alpha-d_k}\,d\nu_{\lambda_k}(t)=N^{-\alpha}I_{\alpha}^k|f|(x).
\end{align*}
%From here and \eqref{eq5.4}
%\[
%\||x|^{\beta}(f(x)-\varphi_N(x))\|_{p,d\mu_k}\lesssim N^{-\alpha}\||x|^{\beta}I_{\alpha}^k|f|(x)\|_{p,d\mu_k}.
%\]
This, \eqref{eq5.4}, and Theorem~\ref{thm1.3} imply
\begin{align*}
\bigl\||x|^{\beta}(f(x)-\varphi_N(x))\bigl\|_{p,d\mu_k}&\lesssim N^{-\alpha}\bigl\||x|^{\beta}I_{\alpha}^k|f|(x)\bigl\|_{p,d\mu_k}\\
&\lesssim N^{-\alpha} \bigl\||x|^{\delta}f(x)\bigl\|_{p,d\mu_k}\lesssim N^{-\alpha},
\end{align*}
where
$\alpha>0$ is chosen so that $\delta=\alpha+\beta<\frac{d_k}{p'}$.
\end{proof}


\begin{thebibliography}{99}

\bibitem{AbdLit15}
C.~Abdelkefi, M.~Rachdi, \textit{Some properties of the Riesz potentials in
Dunkl analysis}, Ricerche Mat. \textbf {64} (2015), no.~4, 195--215.

\bibitem{BeEr53}
G.~Bateman, A.~Erd\'elyi, et al., \textit{Higher Transcendental Functions, I},
McGraw Hill Book Company, New York, 1953.

\bibitem{BatErd53}
H.~Bateman and A.~Erd\'elyi, \textit{Higher Transcendental Functions}, vol.~2,
New York, MacGraw-Hill, 1953.

%\bibitem{BeEr53}
%G.~Bateman, A.~Erd\'elyi, et al., \textit{Higher Transcendental Functions, I},
%McGraw Hill Book Company, New York, 1953.

\bibitem{Bec08}
W.~Beckner, \textit{Pitt's inequality with sharp convolution estimates}, Proc. Amer. Math. Soc. \textbf {136} (2008), no.~5, 1871--1885.

\bibitem{ChrGra95}
M.~Christ, L.~Grafakos, \textit{Best constants for two nonconvolution inequalities}, Proc. Amer. Math. Soc. \textbf {123} (1995), no.~6, 1687--1693.

\bibitem{Dun92}
C.\,F.~Dunkl, \textit{Hankel transforms associated to finite reflections
groups}, Contemp. Math. \textbf{138} (1992), 123--138.

\bibitem{Fro35}
O.~Frostman, \textit{Potentiel d'equilibre et capacite des ensembles avec quelques applications a la theorie des fonctions}, These, Communic. Semin. Math. de l'Univ. de Lund. \textbf{3 } (1935).

\bibitem{FuGraLuZha12}
Z.\,W.~Fu, L.~Grafakos, S.\,Z.~Lu, F.\,Y.~Zhao, \textit{Sharp bounds for $m$-linear Hardy and Hilbert operators}, Houston Journal of Mathematics. \textbf {38} (2012), no.~1, 225--244.

\bibitem{GadGul11}
A.\,D.~Gadjiev, V.\,S.~Guliyev,  A.~Serbetci and E.\,V.~Guliyev,  \textit{The Stein--Weiss type inequalities for the B-Riesz potentials}, J. Math. Ineq. \textbf{5} (2011), no.~1, 87-106.

\bibitem{GorIvaTik17}
D.\,V.~Gorbachev, V.\,I.~Ivanov,  S.\,Yu.~Tikhonov,  \textit{$L^p$-bounded Dunkl-type generalized translation operator and its applications},     arXiv:1703.06830  (2017).

\bibitem{HarLit28}
G.\,H.~Hardy, J.\,E.~Littelwood, \textit{Some properties of fractional integrals, I}, Math. Zeit. \textbf {27} (1928), 565--606.

\bibitem{HasMusSif09}
S.~Hassani, S.~Mustapha, M.~Sifi, \textit{Riesz potentials and fractional
maximal function for the Dunkl transform}, J. Lie Theory. \textbf{19} (2009),
no.~4, 725--734.

\bibitem{Her77}
I.\,W.~Herbst, \textit{Spectral theory of the operator $(p^2+m^2)^{1/2}-Ze^2/r$}, Comm. Math. Phys. \textbf {53} (1977), 285--294.

\bibitem{Lie83}
E.\,H.~Lieb, \textit{Sharp constants in the Hardy-Littlewood-Sobolev and related inequalities}, Ann. of Math.  \textbf {118} (1983), no.~2, 349--374.

\bibitem{Liz63}
P.\,I.~Lizorkin, \textit{Generalized Liouville differentiation and function spaces $L^r_p(E_n)$. Embedding theorems}, Sbornik: Math. \textbf{60} (1963), no.~3, 325–353. (in Russian)

\bibitem{Pla07}
S.~S.~Platonov, \textit{Bessel harmonic analysis and approximation of functions
on the half-line}, Izvestiya: Math. \textbf{71} (2007), no.~5, 1001--1048.

\bibitem{Rie49}
M.~Riesz, \textit{L'integrale de Riemann-Liouville
et le probleme de Cauchy}, Acta Math. \textbf {81} (1949), no.~1, 1--222.

\bibitem{Ros98}
M.~R\"osler, \textit{Generalized Hermite polynomials and the heat equation for
Dunkl operators}, Comm. Math. Phys. \textbf{192}
(1998),~519--542.

\bibitem{Ros99}
M.~R\"osler, \textit{Positivity of Dunkl's intertwinning operator}, Duke Math. J. \textbf{98}
(1999),~445--463.

\bibitem{Ros02}
M.~R\"osler, \textit{Dunkl operators. Theory and applications, in Orthogonal
Polynomials and Special Functions}, Lecture Notes in Math. Springer-Verlag,
1817, pp.~93--135, 2003.

\bibitem{Ros03}
M.~R\"osler, \textit{A positive radial product formula for the Dunkl kernel}, Trans.
Amer. Math. Soc. \textbf{355}
(2003),~2413--2438.

\bibitem{Sam02}
S.\,G.~Samko, \textit{Hypersingular Integrals and Their Applications}, Series
Analytical Methods and Special Functions \textbf {5}, Taylor, Francis,
London--New York (2005).

\bibitem{Sam05}
S.~Samko, \textit{Best constant in the weighted Hardy inequality: the spatial and spherical version}, Fract. Calc. Anal. Appl.  \textbf {8} (2005), 39--52.

\bibitem{Saw}
E. Sawyer, \textit{A two weight weak type inequality for fractional integrals},
Trans. Am. Math. Soc., \textbf {281} (1984), 339--345.
%E. Sawyer, and R. Wheeden,
%\textit{Weighted inequalities for fractional integrals on Euclidean and homogeneous spaces},
%Am. J. Math. \textbf {114} (1992),  813--874.

\bibitem{Sob38}
S.~Soboleff, \textit{On a theorem in functional analysis},
 % Dokladi Ak. Nauk, U.S.S.R. \textbf {20} (1938), 5. (Russian) %\textit{Sur un theґore`me d'analyse fonctionnelle”,
  Rec. Math. [Mat. Sbornik] N.S., 4(46):3  (1938), 471--497; Amer. Math. Soc. Transl., no. 2(34) (1963), 39--68.



\bibitem{SteWei58}
E.\,M.~Stein, G.~Weiss, \textit{Fractional integrals on n-dimensional Euclidean space}, J. Math. Mech. \textbf {7} (1958), no.~4, 503--514.

\bibitem{ThaXu05}
S.~Thangavelu, Y.~Xu, \textit{Convolution operator and maximal function for
Dunkl transform}, J. d'Analyse. Math. \textbf{97} (2005), 25--55.

\bibitem{ThaXu07}
S.~Thangavelu, Y.~Xu, \textit{Riesz transform and Riesz potentials for Dunkl
transform}, J. Comput. Appl. Math. \textbf{199} (2007), 181--195.

\bibitem{Tri02}
K.~Trim\`{e}che, \textit{Paley-Wiener Theorems for the Dunkl transform and
Dunkl translation operators}, Integral Transform. Spec. Funct. \textbf{13} (2002), 17--38.

\bibitem{Xu00}
Y.~Xu, \textit{Dunkl operators: Funk--Hecke formula for orthogonal polynomials on
spheres and on balls}, Bull. London Math. Soc. \textbf{32} (2000), 447--457.



\end{thebibliography}
\end{document}